\def\textcolor#1{}
\newcommand{\N}{\mathbb{N}}
\newcommand{\C}{\mathbb{C}}
\newcommand{\disk}{\mathbb{D}}
\newcommand{\Cc}{\widehat{{\C}}}
\newcommand{\Circle}{{\mathbb S}^1}
\newcommand{\Deltap}{\Delta^+}
\newcommand{\Deltat}{\Delta^t}
\newcommand{\inter}{\mathring}
\DeclareMathOperator{\fib}{fib}
\DeclareMathOperator{\modu}{mod}
\renewcommand{\tilde}{\widetilde}
\renewcommand{\rho}{\varrho}
\renewcommand{\phi}{\varphi}
\renewcommand{\theta}{\vartheta}
\newcommand\ovl[1]{\overline{#1}}
\newcommand{\sm}{\setminus}
\renewcommand{\ge}{\geqslant}
\renewcommand{\le}{\leqslant}
\theoremstyle{theorem}
\newtheorem{theorem}{Theorem}[section]
\newtheorem{lemma}[theorem]{Lemma}
\newtheorem{proposition}[theorem]{Proposition}
\newtheorem{corollary}[theorem]{Corollary}
\newtheorem{claim}{Claim}
\newtheorem{MainTheorem}{Theorem}
\theoremstyle{definition}
\newtheorem{definition}[theorem]{Definition}
\theoremstyle{remark}
\newtheorem*{remark}{\textsc{Remark}}
\newcounter{reminder}
\newcommand{\hide}[1]{}
\numberwithin{equation}{section}
\title{Puzzles and the Fatou--Shishikura injection for rational Newton maps}
\author{Kostiantyn Drach}
\author{Russell Lodge}
\author{Dierk Schleicher}
\author{Maik Sowinski}
\address{Aix--Marseille Universit\'e, Institut de Math\'ematiques de Marseille, 163 Avenue de Luminy, 13009 Marseille, France}
\email{kostya.drach@gmail.com}
\address{Department of Mathematics and Computer Science, Indiana State University, Terre Haute, IN 47809, USA}
\email{russell.lodge@indstate.edu}
\address{Aix--Marseille Universit\'e, Institut de Math\'ematiques de Marseille, 163 Avenue de Luminy, 13009 Marseille, France}
\email{dierk.schleicher@gmx.de}
\address{Universit\"at Bielefeld, Universit\"atsstrasse 25, 33615 Bielefeld, Germany}
\email{maik.sowinski@gmx.de}
\thanks{This research was partially supported by the advanced grant 695\,621 ``HOLOGRAM'' of the European Research Council (ERC), which is gratefully acknowledged. We thank the members of our dynamics team, especially Wolf Jung, for many discussions and comments on the topic of the paper. We are grateful to the anonymous referee for helpful comments that led to improvements of the paper. We are also grateful to Cornell University for their hospitality during our visits where much of this work was developed. }
\subjclass[2010]{37F10, 37F25, 37C25} 
\keywords{rational map; Newton map; Newton graph; puzzles; Markov property; Fatou inequality; Fatou--Shishikura injection; non-repelling cycle; renormalization.}
\begin{document}

\begin{abstract}
We establish a principle that we call the \emph{Fatou--Shishikura injection} for Newton maps of polynomials: there is a dynamically natural injection from the set of non-repelling periodic orbits of any Newton map to the set of its critical orbits. This injection obviously implies the classical Fatou--Shishikura inequality, but it is stronger in the sense that every non-repelling periodic orbit has \emph{its own} critical orbit. 

Moreover, for every Newton map we associate a forward invariant graph (a \emph{puzzle}) which provides a dynamically defined partition of the Riemann sphere into closed topological disks (\emph{puzzle pieces}). This puzzle construction is for rational Newton maps what Yoccoz puzzles are for polynomials: it provides the foundation for all kinds of rigidity results of Newton maps beyond our Fatou--Shishikura injection. Moreover, it gives necessary structure for a classification of the postcritically finite maps in the spirit of Thurston theory.
\end{abstract}

\maketitle

\section{Introduction}

Over the past several decades, substantial progress has been made in the understanding of the dynamics of iterated rational maps, with a particular focus on the dynamics of polynomials: the invariant basin of infinity, and the resulting dynamics in B\"ottcher coordinates, provide strong tools for global coordinates of the dynamics and subsequently for very deep studies of the dynamical fine structure. In comparison, the understanding of the dynamics of non-polynomial rational maps is lagging behind in a number of ways. We believe that Newton maps of polynomials are not only dynamically well-motivated as root finders (see for instance \cite{HSS,BAS,NewtonEfficient,NewtonRobinMarvin} for recent progress) but are also a most suitable class of rational maps for which significant progress in analogy to polynomials is possible. Therefore Newton maps are a large and important family of rational maps. 
This paper provides some results --- in particular one that we call the Fatou--Shishikura injection --- as well as foundations for subsequent results that extend our understanding of polynomial maps to Newton maps (and possibly further rational maps). These include the following:
\begin{itemize}
\item
a \emph{classification of all postcritically finite Newton maps}; this is given in \cite{LMS1,LMS2}. This establishes the first large family of rational maps of all degrees, beyond polynomials \cite{Poirier}, for which such a classification is available;
\item
the \emph{rational rigidity principle} for Newton maps: in the dynamical plane, any two orbits in the Julia set of a Newton map can be combinatorially distinguished, except when they are related to actual polynomial dynamics. Such results are often described in terms of ``local connectivity'' of the Julia set, while a stronger and more precise way to express this is ``triviality of fibers'' in the Julia set (compare \cite{FibersMandel}). Furthermore, in parameter space, any two combinatorially equivalent Newton maps are quasi-conformally conjugate provided they are either non-renormalizable, or both renormalizable ``in the same way''. These results are usually called ``rigidity'' and are described in \cite{RationalRigidity1}. 
\end{itemize}

Results like these might lead one to think that, contrary to frequent belief, rational dynamics may not be much more complicated than polynomial dynamics as soon as a good combinatorial structure is established. An earlier application of this philosophy could be found in the case of cubic Newton maps, a one-dimensional family of maps with only one ``free'' critical point. The latter fact helped to develop a deep understanding of this family and its parameter space: \cite{Tan}, building on \cite{He}, produced combinatorics leading to a fruitful study of local connectivity and rigidity in \cite{Roe, RWY} (see also \cite{AR}). Our results, in this and subsequent manuscripts, extend previous work by moving from degree three to arbitrary degrees, and hence by moving from complex one-dimensional to arbitrary finite-dimensional parameter spaces. This is in analogy to the development from iterated quadratic polynomials (a fairly well understood one-dimensional family of unicritical maps) to polynomials of arbitrary degrees (the multicritical case is much less understood). 

\subsection{The main results}

For a given polynomial $p \colon \C \to \C$, the \textit{Newton map of $p$} is a rational map $N_p \colon \Cc \to \Cc$ defined as $N_p(z):=z - p(z)/p'(z)$. Such maps naturally arise in Newton's iterative method for finding roots of $p$. We will assume that the degree of $N_p$ is at least $3$. It is well possible that a Newton map coming from an entire transcendental function is still a rational map; in Section~\ref{Sec:PCFandFuture} we argue that our results apply to these as well. 

Before stating our first theorem, let us recall the classical Fatou--Shishikura inequality \cite{MitsuFatouShishikura}: every rational map of degree $d$ on the Riemann sphere has at most $2d-2$  non-repelling periodic orbits. A somewhat stronger reformulation is that the rational map has no more non-repelling periodic orbits than it has critical orbits \cite{AdamFSI}. Our first theorem is an upgrade of the count between non-repelling and critical orbits to the dynamically more significant result that \emph{every non-repelling periodic orbit has its own critical orbit}. For this assignment of ``its own critical orbit'' we introduce the term \emph{Fatou--Shishikura injection}. 
The underlying idea that for polynomials every indifferent orbit has its own critical orbit is due to Kiwi \cite{Kiwi}.
In order to make our first result precise, we use the following definition.

\begin{definition}[Dynamically associated critical orbit]
\label{Def:DynamicallyAssociated}
We say that a critical orbit is \emph{dynamically associated} to a non-repelling periodic orbit if the $\omega$-limit set of the critical orbit contains the non-repelling orbit (or, in the case of a Siegel point, the entire boundary of all the periodic Siegel components in the cycle). The critical orbit is \emph{exclusively dynamically associated} to a non-repelling periodic orbit if it is dynamically associated to this orbit but to no other non-repelling orbit. 
\end{definition}

Recall that the \emph{$\omega$-limit set of the orbit of a point $z \in \Cc$} is the set 
\[
\omega(z) := \bigcap_{n \in \mathbb N} \ovl{\left\{N_p^{\circ k}(z) \colon k > n\right\}}
\;.
\]

\begin{MainTheorem}[The Fatou--Shishikura injection for Newton maps of polynomials]
\label{Thm:FSI}
\label{Thm:FatouShishikura}
For every Newton map of a polynomial there is an injection from the set of its non-repelling periodic orbits to the set of its critical orbits that assigns to each non-repelling orbit a critical orbit that is exclusively dynamically associated to it. 
\end{MainTheorem}

\begin{remark}
There are some claims that we do \emph{not} make in this theorem. 
It may well happen that some critical orbit is dynamically associated to more than one non-repelling orbit (not exclusively so). For example, some critical orbits might be dense in the Julia set, so they would be associated to all indifferent periodic orbits. However, such critical orbits will then not be accounted for in our injection. Moreover, we do not claim that the injection described in the theorem is unique: there may indeed be several critical orbits that are exclusively dynamically associated to one non-repelling orbit. 
\end{remark}

\begin{remark}
The Fatou--Shishikura inequality in its original formulation, as introduced above, compares the number of non-repelling orbits to the number of critical orbits. Since then, some sort of art has developed to include ever more dynamical features in this inequality, especially for polynomials. For instance, the number of repelling periodic orbits that are not landing points of periodic dynamic rays can be added to the number of non-repelling orbits, as well as the number of \emph{wandering triangles} (triples of rays that land together at a point that is not eventually periodic); see \cite{BCLOS} for details and further results. In fact, these repelling orbits without rays and the wandering triangles again have ``their own'' critical orbits.

One can extend this injection also in our case; for instance, the polynomial results from \cite{BCLOS} can be imported into our setting in a straightforward way. 
Another dynamical feature that can be accounted for in our injection is infinitely renormalizable dynamics: every instance of infinitely renormalizable dynamics ``consumes'' at least one infinite critical orbit that is not dynamically associated to non-repelling orbits.

Let us also mention that Kiwi's results have recently been extended to the setting of entire transcendental maps \cite{BF1}, and a transcendental version of the Fatou--Shishikura inequality was proven in \cite{BF2} based on this extension (see also \cite{BF3} for a further refinement of the inequality involving so-called rationally invisible repelling orbits).

\end{remark}

We mentioned earlier the developing ``philosophy'' that difficulties on the dynamics of rational maps can be resolved by results on the dynamics of polynomials together with good combinatorial control on rational maps. 
The proof of Theorem~\ref{Thm:FSI} provides a good example for this; here is an outline of the arguments involved.

\begin{enumerate}
\item
The Fatou--Shishikura injection holds true for polynomials.
\item
For every Newton map $N_p$ of a polynomial, every non-repelling periodic point is either a (super-) attracting fixed point (a root of $p$), or it is contained in a domain of renormalization (defined below).
\item
The process of renormalization preserves the Fatou--Shishikura injection.
\end{enumerate}

The key step in this chain of arguments is to establish (2), and will be done by using a properly defined \emph{(Newton) puzzle partition}. This is not an obvious task: unlike in the polynomial case, where the basin of a superattracting fixed point at $\infty$ is partitioned in a straightforward way by equipotentials and rays landing together (the classical \emph{Yoccoz puzzle} construction for polynomials), rational maps in general do not have such a \textit{global} combinatorial structure. Our second main result of this paper --- Theorem \ref{Thm:B} --- shows that for Newton maps this difficulty can be resolved. It roughly says that \textit{every Newton map of a polynomial gives rise to a well-defined puzzle partition of arbitrary depth}. More precisely it says the following (all terms will be defined in later sections).

\begin{MainTheorem}[Newton puzzles for Newton maps of polynomials]
\label{Thm:B}
Every Newton map of a polynomial has an iterate $g$ for which there exists a finite graph $\Gamma \subset \Cc$  that is $g$-invariant (except possibly in a Fatou neighborhood of the roots), and so that for every $n \geqslant 0$  the complementary components of $g^{-n}(\Gamma)$ that intersect the Julia set are Jordan disks that satisfy the Markov property under $g$. These disk components define a Newton puzzle partition of depth $n$.
\end{MainTheorem}

The possible exception to forward invariance of $\Gamma$ is to be understood as follows: every root has a compact and forward invariant neighborhood within its Fatou component (the immediate basin) in which $\Gamma$ may fail to be  invariant.

Our construction of puzzles in Theorem~\ref{Thm:B}, restricted to cubic Newton maps, is different from the one in \cite{Roe}. Theorem~\ref{Thm:B} provides the foundation for much of our subsequent work using puzzles theory on rigidity of Newton maps; see in particular~\cite{RationalRigidity1}.

The paper is organized as follows. We start by reviewing some known facts about Newton maps in Section~\ref{Sec:Review}. After that, in Section~\ref{Sec:Renorm} we establish Theorem \ref{Thm:B} (in fact, in a slightly more general form, see Theorem~\ref{Thm:PuzPr}). The Newton puzzle partition is then exploited to identify renormalization domains for points that have periodic itineraries with respect to this partition, and to extract corresponding polynomial-like maps. The results of Section~\ref{Sec:Renorm} will be then used in Section~\ref{Sec:ProofFSI} to establish Theorem~\ref{Thm:FSI}. In order to make the paper self-contained, in Section~\ref{Sec:ProofFSI} we describe a proof of the Fatou--Shishikura injection for polynomials; it is based on the Goldberg--Milnor fixed point portraits, similarly as in \cite{Kiwi}. 

Substantial parts of this work are based on the Bachelor thesis of the last named author \cite{MaikBachelor}.

\section{Background on Newton maps}
\label{Sec:Review}

Let $N_p$ be the Newton map of a polynomial $p$ and let $d$ be the degree of $N_p$. It is straightforward to check that the fixed points of $N_p$ in $\C$ are exactly the distinct roots of $p$. Every such fixed point is attracting with multiplier $(m-1)/m$, where $m \ge 1$ is the multiplicity of this point as a root of $p$. In particular, simple roots are superattracting fixed points of $N_p$. Every Newton map has one more fixed point at $\infty$; it is repelling with multiplier $\deg p/(\deg p - 1)$.

A polynomial $p$ and its Newton map $N_p$ have the same degree if and only if all roots of $p$ are simple. In general, the degree of the Newton map equals the number of distinct roots of $p$. Since the case $d=2$ is trivial, we will assume that $d \geqslant 3$ without explicit mention from now on.

The rational maps that arise as Newton maps can be described explicitly as follows (see \cite[Proposition 2.1.2]{He}, as well as \cite[Proposition~2.8]{RS07} for a proof):
\begin{proposition}[Head's theorem]
\label{Prop_Head}
A rational map $f$ of degree $d \geqslant 3$ is a Newton map if and only if $\infty$ is a repelling fixed point of $f$ and for each fixed point $\xi\in\C$, there exists an integer $m\geqslant 1$ such that $f'(\xi)=(m-1)/m$.
\qed
\end{proposition}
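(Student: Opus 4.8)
The plan is to prove the two implications separately. The ``only if'' direction simply collects facts recalled at the beginning of this section: if $f=N_p$, then the finite fixed points of $f$ are exactly the distinct roots of $p$, a root of multiplicity $m$ contributes the multiplier $(m-1)/m$, and $\infty$ is a repelling fixed point. All the content is therefore in the ``if'' direction, where the strategy is to \emph{reconstruct} a polynomial $p$ out of the partial-fraction data of the rational function $1/(z-f(z))$, using the identity $z-N_p(z)=p(z)/p'(z)$.

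So assume $\infty$ is a repelling fixed point of $f$, with multiplier $\lambda$ satisfying $|\lambda|>1$ (in particular $\lambda\notin\{0,1\}$), and that each finite fixed point $\xi$ has multiplier $f'(\xi)=(m_\xi-1)/m_\xi$ for a positive integer $m_\xi$. Put $F(z):=z-f(z)$, which is a rational function, and not identically zero since $\deg f=d\geq 3$. The zeros of $F$ in $\C$ are precisely the finite fixed points $a_1,\dots,a_k$ of $f$, and at each of them $F'(a_i)=1-f'(a_i)=1-\frac{m_i-1}{m_i}=\frac{1}{m_i}\neq 0$; hence every $a_i$ is a \emph{simple} zero of $F$, so $1/F$ has a simple pole at $a_i$ with residue $1/F'(a_i)=m_i$ and no further poles in $\C$. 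Therefore
\[
  H(z)\;:=\;\frac{1}{z-f(z)}\;-\;\sum_{i=1}^{k}\frac{m_i}{z-a_i}
\]
is a rational function without poles in $\C$, i.e.\ a polynomial.

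To conclude, I would examine $H$ near $\infty$. Since $\infty$ is a fixed point of the rational map $f$ with multiplier $\lambda$, conjugating by $z\mapsto 1/z$ gives $f(z)=\lambda^{-1}z+O(1)$ as $z\to\infty$, so $z-f(z)=(1-\lambda^{-1})z+O(1)$; as $\lambda\neq 1$ this tends to $\infty$, whence $1/(z-f(z))\to 0$. Since also $\sum_{i}m_i/(z-a_i)\to 0$, the polynomial $H$ vanishes at $\infty$ and hence $H\equiv 0$. Thus $1/(z-f(z))=\sum_{i=1}^{k}m_i/(z-a_i)=p'(z)/p(z)$ for the polynomial $p(z):=\prod_{i=1}^{k}(z-a_i)^{m_i}$, which rearranges to $f(z)=z-p(z)/p'(z)=N_p(z)$, as claimed; the degree matches automatically ($\deg N_p=\deg f=d$).

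The one step requiring genuine care is the behaviour of $z-f(z)$ at $\infty$, and this is exactly where the hypothesis that $\infty$ is repelling is used essentially: were the multiplier at $\infty$ equal to $1$, the leading term of $z-f(z)$ would degenerate and $1/(z-f(z))$ need not tend to $0$, so $H$ need not vanish. Everything else is routine manipulation of the partial-fraction decomposition of the rational function $1/(z-f(z))$.
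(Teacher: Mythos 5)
Your proof is correct. Note that the paper does not prove Proposition~\ref{Prop_Head} at all: it is quoted with the proof deferred to He's thesis and to R\"uckert--Schleicher \cite{RS07}, and the argument given there is essentially the one you found --- writing $1/(z-f(z))$ as a sum of simple-pole terms $m_i/(z-a_i)$ (the hypotheses on the multipliers making each finite fixed point a simple zero of $z-f(z)$ with integer residue $m_i$ for the reciprocal), using the behaviour at the repelling fixed point $\infty$ to kill the polynomial part, and recognizing the result as $p'/p$ for $p=\prod_i(z-a_i)^{m_i}$. The only cosmetic addition worth making is the observation that, since the multiplier at every fixed point differs from $1$, all $d+1$ fixed points of $f$ are simple, so $f$ has exactly $d\ge 3$ distinct finite fixed points; this rules out the degenerate case of an empty sum (which your identity $H\equiv 0$ would anyway contradict) and shows directly that the reconstructed $p$ has exactly $d$ distinct roots, consistent with $\deg N_p=d$.
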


For a map $f$ and $k \ge 1$ we write $f^{\circ k}$ for the $k$-th iterate of $f$. Conversely, for a set $A$ we denote by $f^{-k}(A)$ the full $k$-fold preimage of $A$, i.e.\ the full preimage of $A$ under $f^{\circ k}$.

Our first definition concerns the Fatou components of a Newton map $N_p$ that contain a root; these play a fundamental role in the study of Newton maps.

\begin{definition}[Immediate basin]
\label{Def_ImmediateBasin}
Let $N_p$ be a Newton map and $\xi\in\C$ a fixed point of $N_p$. Let $B_{\xi}=\{z\in\C\,:\, \lim_{n\to\infty}N_p^{\circ n}(z)=\xi\}$ be the \emph{basin (of attraction)} of $\xi$.
The connected component of $B_\xi$ containing $\xi$ is called the \emph{immediate basin} of $\xi$ and denoted $U_\xi$.
\end{definition}

Clearly, $B_\xi$ is open. By a theorem of Przytycki \cite{Pr}, $U_{\xi}$ is simply connected and $\infty\in\partial U_\xi$ is an accessible boundary point; in fact, a result of Shishikura \cite{Sh} implies that every component of the Fatou set of $N_p$ is simply connected.

\begin{definition}[Access to $\infty$]
\label{Def_Access}
Let $U_{\xi}$ be the immediate basin of the attracting fixed point $\xi \in \C$. For every injective curve $\Gamma:[0,1]\to U_{\xi}\cup\{\infty\}$ with $\Gamma(0) =\xi$ and $\Gamma(1)=\infty$, its homotopy class within $U_{\xi}\cup\{\infty\}$, fixing endpoints, defines an \emph{access to $\infty$} for $U_{\xi}$.
\end{definition}

In topologically simple cases, a simpler definition suffices.

\begin{definition}[Accesses to vertices of graphs]
\label{Def:SimpleAccess}
For a finite graph $\Gamma$ embedded in the sphere, an \emph{access to a vertex} $x\in \Gamma$ is given in terms of a (sufficiently small) disk $D$ around $x$: an access is then represented by a component of $D\sm\Gamma$ that contains $x$ on the boundary.  
\end{definition}

Most of the time we will use the simple definition of an access to a vertex of a finite graph. However, topological accesses to infinity within the immediate basins provides the important first-level combinatorial data due to the following proposition.

\begin{proposition}[Accesses to infinity; {\cite[Prop.~6]{HSS}}]
\label{Prop:AccessesHSS}
Let $N_p$ be a Newton map of degree $d\geqslant 3$ and $U_\xi$ an immediate basin for $N_p$. Then there exists $k_\xi \in \{1,\dots,d-1\}$ such that $U_\xi$ contains $k_\xi$ critical points of $N_p$ (counting multiplicities), $N_p|_{U_\xi}$ is a branched covering map of degree $k_\xi+1$, and $U_\xi$ has exactly $k_\xi$ accesses to $\infty$.
\qed
\end{proposition}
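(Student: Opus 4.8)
The plan is to study the behavior of $N_p$ near the repelling fixed point at $\infty$ and to compare the local picture with the global degree of $N_p|_{U_\xi}$. First I would set up the local dynamics at $\infty$: since $\infty$ is a repelling fixed point with multiplier $\deg p/(\deg p-1)$, there is a linearizing coordinate in a neighborhood of $\infty$ in which $N_p$ acts (up to conjugacy) as multiplication by this multiplier. In this coordinate, the accesses of $U_\xi$ to $\infty$ — i.e.\ the homotopy classes of curves in $U_\xi\cup\{\infty\}$ from $\xi$ to $\infty$ in the sense of Definition~\ref{Def_Access} — are recorded by a finite nonempty collection of ``sectors'' at $\infty$ through which $U_\xi$ reaches the fixed point; finiteness follows from the fact that each such sector must contain a definite amount of angle (more precisely, one uses that $N_p$ near $\infty$ is conformally a perturbation of a linear map, so $U_\xi\cap D$ has finitely many components accumulating at $\infty$, and $U_\xi$ being simply connected by Przytycki's theorem \cite{Pr} keeps the count finite). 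Denote by $k_\xi\ge 1$ the number of these accesses.

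Next I would compute the degree of $N_p|_{U_\xi}$. Because $U_\xi$ is completely invariant as a connected component of the full basin $B_\xi$ only in the forward direction, $N_p(U_\xi)=U_\xi$ and $N_p|_{U_\xi}\colon U_\xi\to U_\xi$ is a proper branched covering; let $k_\xi+1$ be its degree (writing it this way in anticipation of the answer). By the Riemann--Hurwitz formula applied to the simply connected domain $U_\xi$, the number of critical points of $N_p$ in $U_\xi$, counted with multiplicity, equals $(k_\xi+1)-1=k_\xi$; this handles the critical-point count once the degree is known. The crux is therefore to show that the covering degree equals the number of accesses plus one. Here I would argue as follows: each access to $\infty$ is forward-invariant as a \emph{set of accesses} (the map permutes them), and in fact one shows each access is fixed because the dynamics near $\infty$ in linearizing coordinates fixes each sector. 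Then, by a boundary-behavior / prime-end argument, the restriction of $N_p$ to the ``ideal boundary'' of $U_\xi$ near each access behaves like the local repelling map, contributing multiplicity one per access; combined with the fact that $\infty$ itself is a fixed point of multiplicity one on $\partial U_\xi$, a counting of preimages of a point near $\infty$ inside $U_\xi$ (some near each access, plus those contributed by $\xi$'s own preimage structure) yields total degree $k_\xi+1$. Equivalently, and perhaps more cleanly, I would invoke the structure of $\partial U_\xi$ as a quotient circle (via the Riemann map $\disk\to U_\xi$, whose boundary extension exists since $\infty$ is accessible and one can arrange local connectivity at $\infty$) and count the preimages of $\infty$ on this circle: there are exactly $d(N_p|_{U_\xi})$ of them, they correspond to the accesses, but the fixed one is counted with the appropriate local degree, giving the relation $k_\xi + 1 = \deg(N_p|_{U_\xi})$.

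The main obstacle I expect is the last step — rigorously relating the number of accesses (a homotopy-theoretic count inside $U_\xi$) to the covering degree (an analytic count). The delicate point is controlling the boundary behavior of $N_p|_{U_\xi}$ near $\infty$: a priori $\partial U_\xi$ can be complicated, so one cannot naively ``count preimages on the boundary.'' The right tool is to work in the linearizing coordinate at $\infty$ together with Przytycki's simple-connectivity of $U_\xi$: in the linearizing chart, $U_\xi$ near $\infty$ is a union of $k_\xi$ invariant ``petal-like'' sectors, the map is essentially $w\mapsto \lambda w$ with $\lambda = \deg p/(\deg p -1)$, and one checks that each such sector is mapped onto itself with local degree one, while the bounded part of $U_\xi$ contributes the remaining sheet. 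Assembling these local contributions into the global degree $k_\xi+1$ — i.e.\ making precise that ``$k_\xi$ sectors at infinity $+$ one interior sheet $=$ total degree'' — is the technical heart, and is exactly where the HSS argument \cite[Prop.~6]{HSS} does its work; I would follow that argument, using the explicit multiplier at $\infty$ from Head's theorem (Proposition~\ref{Prop_Head}) to pin down the sector dynamics. The bound $k_\xi \le d-1$ is then immediate since the $k_\xi$ critical points in $U_\xi$ are among the $2d-2$ critical points of $N_p$, and in fact one uses that the $d$ immediate basins (of the $d$ distinct roots) are disjoint and each contains at least one critical point, while $\infty$ is repelling and hence not in any basin — so the total critical count forces $k_\xi\le d-1$.
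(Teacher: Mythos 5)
Note first that the paper does not prove this statement at all: it is quoted verbatim from \cite[Prop.~6]{HSS} with a \textup{q.e.d.} box, so the comparison is with the standard argument given there. The Riemann--Hurwitz half of your proposal is fine: $N_p|_{U_\xi}$ is a proper self-map of the simply connected domain $U_\xi$ (Przytycki), so if its degree is $m$ it has exactly $m-1$ critical points in $U_\xi$, and the bound $k_\xi\le d-1$ follows from the global critical count. The genuine gap is precisely in the step you flag as the crux, the identity ``number of accesses $=m-1$'', and the route you sketch for it would fail as written. Counting ``preimages of $\infty$ on the boundary circle'' is not well posed: there is no continuous boundary extension of the Riemann map to rely on (``one can arrange local connectivity at $\infty$'' has no justification --- $\partial U_\xi$ need not be locally connected, and accessibility of $\infty$ only provides radial limits at particular prime ends), and even at the level of prime ends the preimage of $\infty$ under the induced boundary map also contains prime ends associated to poles of $N_p$ on $\partial U_\xi$, so that count does not equal the number of accesses. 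Similarly, the finiteness of the set of accesses and the claim that each access is individually fixed (``the dynamics near $\infty$ fixes each sector'') are exactly what must be proved: an access is a global homotopy class, a priori $N_p$ could permute accesses in nontrivial cycles and $U_\xi$ could have infinitely many of them, and the assertion that ``each sector must contain a definite amount of angle'' in the linearizing coordinate is unsupported.

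The argument in \cite{HSS} closes this gap by a different mechanism, which your sketch only gestures at and ultimately defers to: conjugate $N_p|_{U_\xi}$ by a Riemann map to a proper self-map of $\disk$, i.e.\ a Blaschke product $B$ of degree $m$ with an attracting fixed point in $\disk$; by Denjoy--Wolff and Julia--Wolff--Carath\'eodory every fixed point of $B$ on $\partial\disk$ has real multiplier greater than $1$, so there are exactly $m-1$ of them; at each such boundary fixed point one transports an invariant curve into $U_\xi$ and shows it lands at a fixed point of $N_p$ on $\partial U_\xi$, which can only be $\infty$ because all finite fixed points are attracting and lie inside their own immediate basins; finally one verifies that this assignment between boundary fixed points of $B$ and accesses to $\infty$ is a bijection, which simultaneously yields finiteness and invariance of the accesses. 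Since your proposal explicitly falls back on ``the HSS argument'' at this decisive point, and its own suggested counting is flawed in the ways described, it does not constitute an independent proof of the access count.
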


A point $z \in \C$ is called a \emph{pole} if $N_p(z)=\infty$, a \emph{prepole} if $N_p^{\circ k}(z)=\infty$ for some $k> 1$, and a \emph{pre-fixed} point if $N_p^{\circ k}(z)$ is a finite fixed point for some $k > 1$.

The first and fundamental step to construct our Newton puzzles is to construct what we call the \emph{channel diagram} (see below); this is a finite forward invariant graph that connects all fixed points of the Newton map. Strictly speaking, it only exists in the (not very) special case of  \emph{attracting-critically-finite} maps, and it is most convenient to work in this case. We will explain in Section~\ref{SSec:BeyondACF} how to adjust the definition of Newton puzzles in the general case.

\begin{definition}[Attracting-critically-finite]
\label{Def:acf}
We say that a Newton map $N_p$ of degree $d$ is \emph{attracting-critically-finite} if all critical points in the basins of the roots have finite orbits, or equivalently, all attracting fixed points are superattracting and all critical orbits in their basins eventually terminate at the fixed points. 
\end{definition}

The following observation is well known and its proof is standard.
\begin{lemma}[Only one critical point]
\label{Lem_OnlyCritical}
Let $N_p$ be a Newton map that is attracting-critically-finite and let $\xi\in\C$ be a fixed point of $N_p$ with immediate basin $U_{\xi}$. Then $\xi$ is the only critical point in $U_{\xi}$. 
\end{lemma}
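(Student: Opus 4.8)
The plan is to reduce the statement, via Proposition~\ref{Prop:AccessesHSS}, to the claim that $\deg\bigl(N_p|_{U_\xi}\bigr)$ equals the local degree of $N_p$ at $\xi$, and then to rule out any further critical point of $N_p$ in $U_\xi$ using the B\"ottcher coordinate at the superattracting fixed point $\xi$.

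First I would dispose of the bookkeeping. Since $N_p$ is attracting-critically-finite, $\xi$ is a superattracting fixed point, i.e.\ a simple root of $p$; in particular $\xi$ is itself a critical point of $N_p$, of some local degree $\ell\ge 2$, hence contributing multiplicity $\ell-1$ to the critical count. By Proposition~\ref{Prop:AccessesHSS}, $N_p|_{U_\xi}$ is a branched covering of degree $k_\xi+1$ and $U_\xi$ contains exactly $k_\xi$ critical points of $N_p$ counted with multiplicity. Hence the lemma is equivalent to $k_\xi=\ell-1$, that is, to $\deg\bigl(N_p|_{U_\xi}\bigr)=\ell$, that is, to the absence in $U_\xi$ of any critical point of $N_p$ other than $\xi$; and since $U_\xi\subseteq B_\xi$ can contain no root of $p$ distinct from $\xi$, such an extra critical point would necessarily be a ``free'' one.

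Now suppose, for contradiction, that $U_\xi$ contains a critical point $c\ne\xi$. Let $\phi$ be the B\"ottcher coordinate at $\xi$, conjugating $N_p$ to $z\mapsto z^{\ell}$, extended maximally to a conformal isomorphism $\phi\colon V\xrightarrow{\ \sim\ }\disk_{r}$ with $\xi\in V\subseteq U_\xi$ and $0<r\le 1$. On $V$ the map $N_p$ is conformally conjugate to $z\mapsto z^{\ell}$, so $V$ contains no critical point of $N_p$ other than $\xi$; since $c\notin V$, we get $V\subsetneq U_\xi$. By the classical structure of the maximal B\"ottcher domain, the equipotential $\partial V$ is then singular and passes through a critical point $c'$ of $N_p$, with $N_p(c')$ at the (strictly positive) B\"ottcher level $r^{\ell}$; moreover $c'\in U_\xi$ and $c'\ne\xi$, since $\xi$ lies in the interior of $V$. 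Because $\phi$ conjugates $N_p|_{V}$ to the radial map $t\mapsto t^{\ell}$ of the disk, the forward orbit of $c'$ then stays at the strictly positive B\"ottcher levels $r,r^{\ell},r^{\ell^2},\dots$, and in particular never reaches $\xi$.

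On the other hand $c'$ is a critical point of $N_p$ lying in $U_\xi\subseteq B_\xi$, so attracting-critically-finiteness forces its forward orbit to terminate at $\xi$ --- a contradiction. Therefore $U_\xi$ contains no critical point of $N_p$ besides $\xi$, which is the assertion of the lemma. The one ingredient here beyond routine bookkeeping is the classical structure theory of the maximal B\"ottcher domain of a superattracting fixed point --- the existence of the singular equipotential $\partial V$ through a critical point $c'\ne\xi$ whose image already sits at a strictly positive level; this is standard (Douady--Hubbard, Milnor), and is presumably what is meant by calling the statement ``well known''. I would expect this to be the only delicate part, everything else being formal.
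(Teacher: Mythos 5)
Your proposal is correct; note that the paper itself omits the proof of this lemma as ``standard'', and your argument is exactly the standard one it alludes to: since attracting-critical-finiteness makes $\xi$ superattracting, the maximal B\"ottcher domain $V\subset U_\xi$ either equals $U_\xi$ (done) or carries a critical point $c'\neq\xi$ on $\partial V$ whose forward orbit stays at the strictly positive B\"ottcher levels $r^{\ell},r^{\ell^2},\dots$, hence is infinite and never terminates at $\xi$, contradicting attracting-critical-finiteness. The only step needing a word of care --- that $N_p(c')$ lies exactly on the level-$r^{\ell}$ equipotential inside $V$ --- follows from the continuity argument you implicitly use (approximate $c'$ from within $V$), so the proof is complete as written.
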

\begin{proof}[Sketch of proof]
Let $\phi\colon U_\xi\to\disk$ be a Riemann map with $\phi(\xi)=0$. Then $f:=\phi\circ N_p\circ\phi^{-1}$ is a self-map of the standard unit disk $\disk$ with fixed point $0$ and degree at least $2$, and every $z\in\disk\sm\{0\}$ has infinite orbit converging to $0$. Therefore every $z\in U_\xi\sm\{\xi\}$ has infinite orbit converging to $\xi$.
\end{proof}

The \textit{postcritical set} of $N_p$ is the closure of the union of all forward iterates of all critical points of $N_p$. A map is called \textit{postcritically finite} if its postcritical set is finite. Clearly, every postcritically finite Newton map is attracting-critically-finite, but the latter class of maps is much larger. In fact, using Head's theorem (Proposition~\ref{Prop_Head}) and a routine quasiconformal surgery in the basins of roots, one can prove the following (see \cite[Section 3]{DMRS}):

\begin{proposition}[Making Newton map attracting-critically-finite]
\label{Prop:ACF}
For every polynomial $p$ there exists a polynomial $\tilde p$ and a quasiconformal homeomorphism $\tau \colon \Cc \to \Cc$ with $\tau(\infty)=\infty$ such that:
\begin{enumerate}
\item
$\deg N_p = \deg N_{\tilde p} = \deg \tilde p \leqslant \deg p$; 
\item
the Newton map $N_{\tilde p}$ is attracting-critically-finite;
\item
$\tau$ conjugates $N_p$ and $N_{\tilde p}$ in some neighborhood of the Julia sets of $N_p$ and $N_{\tilde p}$ union all Fatou components (if any) that do not belong to the basins of roots.\qed
\end{enumerate}
\end{proposition}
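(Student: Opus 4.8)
The plan is to build $\tilde p$ and $\tau$ by a quasiconformal surgery that replaces, in each immediate basin of a root of $p$, the current (possibly irrational-rotation-dominated or infinitely-renormalizable) return map by a model map with a superattracting fixed point of the right local degree, while leaving the dynamics untouched near the Julia set and on all Fatou components outside the root basins. The key structural input is Head's theorem (Proposition~\ref{Prop_Head}): a degree-$d$ rational map is a Newton map precisely when $\infty$ is repelling with multiplier $d/(d-1)$ and each finite fixed point $\xi$ has multiplier $(m-1)/m$ for some integer $m\ge 1$. So the surgery must be designed to preserve these two features: it must keep $\infty$ fixed (hence the requirement $\tau(\infty)=\infty$) with the same multiplier, and it must turn each finite fixed point into a \emph{superattracting} one, i.e.\ multiplier $0$, which corresponds to $m=1$ — a simple root. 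This explains why $\deg\tilde p=\deg N_{\tilde p}=\deg N_p$ (the number of distinct roots is preserved) while $\deg\tilde p$ may drop below $\deg p$ (multiple roots become simple).

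Concretely, I would proceed as follows. First, by Proposition~\ref{Prop:AccessesHSS}, each immediate basin $U_\xi$ is a simply connected domain on which $N_p$ acts as a branched cover of degree $k_\xi+1$ with $k_\xi$ accesses to $\infty$; pick a small forward-invariant neighborhood of $\xi$ inside $U_\xi$ (a linearizing or B\"ottcher-type neighborhood) together with a fundamental annulus further out. Second, on that inner piece, interpolate quasiconformally between the given map near $\partial U_\xi$ and a standard model near $\xi$ that has $\xi$ as a superattracting fixed point of local degree equal to the total degree $k_\xi+1$ of $N_p|_{U_\xi}$ — this is the usual ``normalize the attracting dynamics'' surgery (as in Shishikura-style constructions): build a new almost-complex structure on $U_\xi$, invariant under the modified map, and pull it back through $N_p^{-1}$ along all iterated preimages so as to get an $N_p$-invariant Beltrami coefficient supported on $B_\xi\setminus$(a neighborhood of the Julia set). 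Third, since the surgery is confined to the interiors of the root basins (away from a neighborhood of $J(N_p)$ and away from all non-root Fatou components), the Beltrami coefficient has dilatation bounded away from $1$ and zero dilatation elsewhere, so the Measurable Riemann Mapping Theorem gives a quasiconformal $\tau\colon\Cc\to\Cc$, normalized to fix $\infty$, integrating it; the map $g:=\tau\circ N_p\circ\tau^{-1}$ is then holomorphic (rational) of the same degree $d$. Fourth, check that $g$ still satisfies the hypotheses of Head's theorem: $\infty$ is still a repelling fixed point with unchanged multiplier (surgery didn't touch a neighborhood of $\infty$ on the Julia side, and quasiconformal conjugacy preserves the repelling multiplier up to holomorphic normalization — more cleanly, one arranges $\tau$ conformal near $\infty$), and each finite fixed point of $g$ is now superattracting, hence has multiplier $0=(1-1)/1$; all other finite fixed points of $g$ are images of finite fixed points of $N_p$, so there are no new ones. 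Thus $g=N_{\tilde p}$ for some polynomial $\tilde p$ whose distinct roots are exactly the images of the roots of $p$, all now simple, giving $\deg\tilde p=d\le\deg p$. Finally, $N_{\tilde p}$ is attracting-critically-finite by construction: every critical point in a root basin is, after the surgery, eventually mapped to the superattracting fixed point (in the model the only critical point of the basin is the fixed point itself, consistent with Lemma~\ref{Lem_OnlyCritical}), so all such critical orbits terminate; and item~(3) holds because $\tau$ was built with zero dilatation on $J(N_p)$ and on the non-root Fatou components, hence is conformal there and provides a genuine conjugacy on a neighborhood of those sets.

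The main obstacle — and the step deserving the most care — is the simultaneous control of (a) the local degree of the model map at each $\xi$ and (b) the global rationality of the output. One must make sure the model superattracting germ at $\xi$ has local degree exactly matching $\deg(N_p|_{U_\xi})=k_\xi+1$ from Proposition~\ref{Prop:AccessesHSS}, so that the interpolation on the fundamental annulus of $U_\xi$ is genuinely a degree-$(k_\xi+1)$ covering with no spurious critical points created on $\partial U_\xi$ or in the interpolation region; otherwise $g$ would fail to be a Newton map (the accesses-to-infinity count would be wrong) or would acquire extra critical orbits. Equivalently, one needs the surgery to be compatible with the access structure at $\infty$ so that the $k_\xi$ accesses survive. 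This is exactly the kind of careful bookkeeping carried out in \cite[Section~3]{DMRS}, to which the statement defers; the rest is a routine application of the Measurable Riemann Mapping Theorem plus the characterization of Newton maps via Head's theorem.
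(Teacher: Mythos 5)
Your overall strategy --- a quasiconformal surgery confined to the basins of the roots, followed by Head's theorem (Proposition~\ref{Prop_Head}) to recognize the straightened map as a Newton map of a polynomial with simple roots --- is exactly the route the paper intends (it defers the details to \cite[Section~3]{DMRS}). However, there is a genuine gap in your verification of item (2). Your surgery, as described, modifies the map only inside the immediate basins $U_\xi$, replacing the dynamics on a region compactly contained in $U_\xi$ by the model $z\mapsto z^{k_\xi+1}$. This does nothing to critical points of $N_p$ lying in \emph{strictly preperiodic} components of the basins $B_\xi$ (a ``free'' critical point captured by a root basin after finitely many iterates is a robust phenomenon, so such critical points do occur). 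For the modified map these points are still critical, since the map is unchanged near them, and their forward orbits eventually enter the model region, where they converge to $\xi$ but never land on it (under $z\mapsto z^{k_\xi+1}$ only the fixed point itself has a finite orbit). Hence the new map has critical points in root basins with infinite orbits and fails Definition~\ref{Def:acf}, which requires \emph{all} critical points in the basins, not just in the immediate basins, to have finite orbits; your sentence ``every critical point in a root basin is, after the surgery, eventually mapped to the superattracting fixed point'' is unjustified. The missing ingredient is a second, finite, family of local modifications: in each of the finitely many preperiodic basin components containing a critical point, compose the map with a quasiconformal homeomorphism supported compactly in that component which moves the critical value onto the appropriate iterated preimage of the fixed point (the ``center''), so that every critical orbit in a basin terminates at a root. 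Since any orbit meets each modified region at most a bounded number of times, the invariant Beltrami coefficient still has uniformly bounded dilatation and the Measurable Riemann Mapping Theorem applies as before.

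Two smaller points. First, the region you replace cannot be ``a small forward-invariant (linearizing or B\"ottcher-type) neighborhood of $\xi$'': for the model's local degree to equal $k_\xi+1$ and for the other $k_\xi$ critical points of $N_p|_{U_\xi}$ to be erased, the replaced region must be a large equipotential disk (in the uniformized Blaschke picture) containing \emph{all} critical points of $N_p|_{U_\xi}$, with the interpolation carried out in an annulus near $\partial U_\xi$; otherwise leftover critical points in $U_\xi$ survive with infinite orbits, or the degree of the modified map on $U_\xi$ no longer matches $k_\xi+1$. Your ``main obstacle'' paragraph gestures at this, but it contradicts your earlier description of the inner piece. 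Second, you cannot literally ``arrange $\tau$ conformal near $\infty$'': the invariant Beltrami coefficient is supported on the grand orbit of the interpolation regions, which accumulates on $\partial U_\xi\ni\infty$. This is harmless, because Head's theorem only asks that $\infty$ be repelling, and a fixed point of a holomorphic map that is topologically repelling (a property preserved by the conjugacy $\tau$) is repelling; the argument should run through this observation rather than through conformality of $\tau$ at $\infty$.
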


\begin{remark}
The homeomorphism $\tau$ can be constructed so that it has vanishing dilatation on the Julia set (which is relevant only if the latter has positive measure), as well as on Fatou components away from the root basins. 
\end{remark}

Using Proposition~\ref{Prop:ACF} we can focus our attention on the attracting-critically-finite Newton maps. For such maps all finite fixed points are superattracting, and they are the only critical points in their respective immediate basins (Lemma \ref{Lem_OnlyCritical}). Denote these points by $\xi_1, \xi_2, \ldots, \xi_d$. For such maps we can define a \emph{Newton graph} (see \cite[Section 2]{DMRS}) the construction of which we will now recall.

Let $U_i$ be the immediate basin of $\xi_i$. Then $U_i$ has a global B\"ottcher coordinate $\phi_i\colon(\disk,0) \to (U_i, \xi_i)$  with the property that $N_p(\phi_i(z))=\phi_i(z^{k_i})$ for each $z \in \disk$; here $k_i-1 \geqslant 1$ is the multiplicity of $\xi_i$ as a critical point of $N_p$. The map $z \mapsto z^{k_i}$ fixes $k_i-1$ rays in $\disk$. Under $\phi_i$, these are mapped to $k_i-1$ pairwise disjoint (except for endpoints) simple curves $\Gamma^1_i,\Gamma^2_i,\ldots,\Gamma^{k_i-1}_{i}\subset U_i$ that connect $\xi_i$ to $\infty$, are pairwise non-homotopic in $U_i$ (with homotopies fixing the endpoints) and are invariant under $N_p$. They represent all accesses to $\infty$ of $U_i$ (see Proposition \ref{Prop:AccessesHSS}).

\begin{definition}[Channel diagram of a Newton map]\label{Def_ConcreteChannelDiagram}
The \emph{channel diagram} $\Delta$ associated to an attracting-critically-finite Newton map $N_p$ is the finite connected graph with  vertex set $\{\infty, \xi_1, \xi_2,...,\xi_d\}$ and edge set
\[\displaystyle \bigcup_{i=1}^d \bigcup_{j=1}^{k_{i}-1}
\Big\{\overline{ \Gamma^{j}_{i} }\Big\}.\]
\end{definition}

Clearly $N_p(\Delta)= \Delta$. The channel diagram records the mutual locations of the immediate basins of $N_p$ and provides a first-level combinatorial information about the dynamics of the Newton map. It turns out that the channel diagram carries more information about location of the roots.

\begin{proposition}[Complement of immediate basin; {\protect\cite[Corollary~5.2]{RS07}}]
\label{Prop:FixedPoles}
For every immediate basin $U_\xi$ of a Newton map, every component of $\C\setminus U_\xi$ contains at least one fixed point.\qed
\end{proposition}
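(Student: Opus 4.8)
The plan is to reduce to an attracting-critically-finite map and then count fixed points sector by sector using the channel structure of $U_\xi$.

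\emph{Setup.} By Proposition~\ref{Prop:ACF} we may assume $N_p$ is attracting-critically-finite: the surgery conjugates $N_p$ to $N_{\tilde p}$ near the Julia set, hence identifies $U_\xi$ with an immediate basin of $N_{\tilde p}$, carries $\C\setminus U_\xi$ homeomorphically onto the corresponding complement, and matches fixed points with fixed points. Let $k:=k_\xi$ and let $\Gamma^1,\dots,\Gamma^k\subset U_\xi$ be the invariant channel curves from $\xi$ to $\infty$, i.e.\ the images under the univalent B\"ottcher coordinate $\varphi\colon\disk\to U_\xi$ of the $k$ rays fixed by $z\mapsto z^{k+1}$. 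If $k=1$ then $\C\setminus U_\xi$ is connected and contains all $d-1$ finite fixed points other than $\xi$, so we may assume $k\ge 2$. Then, for consecutive channels $\Gamma^i,\Gamma^{i+1}$ (indices mod $k$), the Jordan curve $\Gamma^i\cup\Gamma^{i+1}$ bounds a Jordan domain $R_i$ disjoint from the other channels; thus $\Cc$ is cut by the channels into $k$ ``sectors'' $R_1,\dots,R_k$ with $\partial R_i=\Gamma^i\cup\Gamma^{i+1}$ and $N_p(\partial R_i)=\partial R_i$. Each component of $\C\setminus U_\xi$ is connected and avoids the channels, hence lies in exactly one $R_i$; conversely each $R_i$ meets $\C\setminus U_\xi$ (it contains the gap of $\C\setminus U_\xi$ between the two accesses $\Gamma^i,\Gamma^{i+1}$). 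Therefore it suffices to produce, in each $R_i$, a finite fixed point of $N_p$: such a point is a root of $p$, and being different from $\xi$ and $\infty$ (which lie on $\partial R_i$) it lies in the component of $\C\setminus U_\xi$ contained in $R_i$.

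\emph{Two facts drive the count.} (a) $N_p$ has no poles in $U_\xi$: a pole is a zero of $p'$ that is not a root of $p$, and it is mapped to $\infty\in\partial U_\xi$, contradicting $N_p(U_\xi)=U_\xi$. Thus $h:=N_p-\id=-p/p'$ has, on $\overline{U_\xi}$, only the simple zero at $\xi$ and the simple pole at $\infty$. (b) Since $N_p(\partial R_i)=\partial R_i$ is disjoint from the open set $R_i$, no preimage of a point of $R_i$ can cross $\partial R_i$ as the point varies in $R_i$; hence the valence $\delta_i:=\sum_{\zeta\in N_p^{-1}(w)\cap R_i}\deg_\zeta N_p$ is independent of $w\in R_i$. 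Taking $w$ near $\infty$ (where $N_p$ is conformal and fixes each sector, so the preimages of $w$ consist of one point near $\infty$, lying in $R_i$, and, near each finite pole of $N_p$, as many preimages as its multiplicity), one gets $\delta_i=1+\#\{\text{poles of }N_p\text{ in }R_i\}\ge 1$.

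\emph{The count.} Apply the argument principle to $h$ over $R_i$, indenting $\partial R_i$ by a small arc around $\xi$ (chosen to exclude $\xi$) and by a small arc around $\infty$ (chosen to include $\infty$). Since the only zeros and poles of $h$ thereby enclosed, besides $\infty$, are the finite fixed points and the finite poles of $N_p$ inside $R_i$, one obtains
\[
\#\{\text{finite fixed points of }N_p\text{ in }R_i\}=W_i+\#\{\text{poles of }N_p\text{ in }R_i\}+1=W_i+\delta_i ,
\]
where $W_i$ is the winding number of $h$ along the indented $\partial R_i$; so it remains to check $W_i+\delta_i\ge 1$ (e.g.\ $W_i\ge 0$). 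On the arc around $\xi$ one has $h\sim(z-\xi)/m$ ($m$ the multiplicity of the root $\xi$), giving a contribution $\pm 2\pi/k$, the opening of $R_i$ at $\xi$; on the arc around $\infty$ one has $h\sim -z/\deg p$, giving a contribution equal to $\pm$ the relevant angular extent; and along each channel $\Gamma^j$ the vector $h(z)=N_p(z)-z=\varphi(u^{k+1})-\varphi(u)$ with $u=\varphi^{-1}(z)$ is the secant pointing from $z$ back toward $\xi$ along $\Gamma^j$, so its argument runs from the direction of $\Gamma^j$ at $\xi$ to that at $\infty$ without extra full turns — the landings being tame because the multipliers of $N_p$ at $\xi$ (superattracting) and at $\infty$ ($\deg p/(\deg p-1)$, real positive) produce no spiralling. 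Assembling these contributions — and using (a) to see that the channel parts of the contour carry no further zeros or poles of $h$, so they may if desired be deformed freely inside $U_\xi$ to a standard arc — yields the inequality, hence a finite fixed point in each $R_i$.

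\emph{Main obstacle.} The delicate step is the last one: controlling the change of argument of $h$ along the channels $\Gamma^j$, which need not be ``straight''. This is the sort of fixed-point-portrait bookkeeping familiar from Goldberg--Milnor, and it is governed by the $N_p$-invariance of the channels together with their tame landings at $\xi$ and at $\infty$; everything else is the standard local normal form of $N_p$ near its (super)attracting roots and near the repelling fixed point $\infty$, plus the elementary topology of the channel graph.
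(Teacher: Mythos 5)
The decisive step of your argument is missing, and you say so yourself: the lower bound $W_i\geqslant 0$ (equivalently $W_i+\delta_i\geqslant 1$) is exactly where the content of the proposition sits, and it is not a routine verification. Your justification is that the channels land ``tamely'' at $\xi$ and at $\infty$ because the multipliers there are real and positive. At $\xi$ this is fine (the B\"ottcher map is conformal at the centre), but at $\infty$ landing does not prevent unbounded winding: an invariant curve of a repelling germ with real positive multiplier $\lambda$ can spiral infinitely into the fixed point (in linearizing coordinates every curve $r e^{i\theta(r)}$ with $\theta(\lambda r)\equiv\theta(r)\ (\mathrm{mod}\ 2\pi)$ is invariant, and $\theta(r)=2\pi\log r/\log\lambda$ winds without bound). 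So ``no extra full turns'' is an unproven assertion about the boundary behaviour of the B\"ottcher parametrization of $U_\xi$, not a consequence of the multiplier at $\infty$; and the winding of the secant vector $h(z)=N_p(z)-z$ along an invariant arc is governed by the geometry of that arc, which you do not control. Without a lower bound on $W_i$, the identity $\#\{\text{fixed points in }R_i\}=W_i+\delta_i$ carries no information. This is precisely the difficulty that Goldberg--Milnor-type index arguments must confront explicitly (fundamental-domain bookkeeping in the linearizing coordinate, or a perturbation/Lefschetz index argument); some such device has to be supplied, and as written the proof stops exactly there.

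There is a second, quieter gap: you implicitly assume that each sector $R_i$ contains exactly one component of $\C\setminus U_\xi$ (``the component of $\C\setminus U_\xi$ contained in $R_i$''). A priori a single sector can contain several complementary components --- all of them accumulate at $\infty$ --- and then producing one fixed point per sector does not give one fixed point per component, which is what the proposition asserts. Closing this requires either a plane-topology/prime-end argument that two distinct complementary components inside one sector would force an additional access to $\infty$ between them, contradicting Proposition~\ref{Prop:AccessesHSS}, or a count fine enough to yield in $R_i$ at least as many fixed points as complementary components. Note also that the paper itself does not prove the statement but imports it from \cite{RS07}, where the argument is of a different nature (each complementary component is shown to contain another immediate basin, resp.\ a virtual immediate basin in the entire setting), so neither of the two missing steps can be borrowed from the surrounding text.
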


This proposition implies that the channel diagram has the property that for any pair of fixed rays within the same immediate basin, both complementary components contain at least one further vertex of $\Delta$, i.e.\ one root of $p$. A more precise count on the number of such vertices can be found in \cite[Theorem 2.2]{DMRS}.

\begin{definition}[Level $n$ Newton graph]\label{Def_ConcreteNewtonGraph}
For any $n\ge 0$, denote by $\Delta_n$ the connected component of $N^{-n}_p(\Delta)$ that contains $\Delta$ (with $\Delta_0 := \Delta$). The graph $\Delta_n$ is called the \emph{Newton graph} of $N_p$ at level $n$. 
\end{definition}

By construction, the Newton graph is forward invariant, that is $N_p (\Delta_{n+1}) = \Delta_n \subset \Delta_{n+1}$ for every $n \geqslant 0$. Every edge of $\Delta_n$ is an internal ray of a component of some basin $B_{\xi_i}$, while every vertex is either $\xi_i$, or $\infty$, or an iterated preimage of these. Observe that vertices in $\Delta_n$ are alternating points in the Fatou and the Julia set of $N_p$.

We next state one of the key results in \cite{DMRS} (\cite[Theorem 3.4]{DMRS}; this result is the core of \cite[Theorem A]{DMRS}, which explains the structure of the Fatou set of general Newton maps). 

\begin{theorem}[Poles connect to $\infty$]
\label{Thm:PolesInGraph}
There is an $N\in\N$ such that $\Delta_n$ contains all poles of $N_p$ for all $n\ge N$.\qed
\end{theorem}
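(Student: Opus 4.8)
The plan is to exploit the forward invariance of the Newton graph $\Delta_n$ together with the fact that each immediate basin $U_i$ is partitioned by the channel-diagram edges into $k_i$ "channels", and that preimages of these channels under $N_p$ produce ever-finer partitions of the basins of the roots that, in the limit, must separate every pole from every other pole. First I would set up the following structure: since $N_p$ is attracting-critically-finite, every critical point of $N_p$ lies either in a basin of a root (hence its orbit terminates at some $\xi_i$) or it is a "free" critical point whose orbit stays in the Julia set or in non-root Fatou components. The key point is that a pole $z_0$ (a preimage of $\infty$) lies on the boundary of a bounded Fatou component $V$ which is eventually iterated onto some $U_i$; I want to show that after finitely many preimage steps, $\Delta_n$ reaches $V$, and hence reaches $z_0$ as a vertex.

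The key steps, in order, are: (1) Show that $\Delta_n \to J(N_p)$ in the sense that every point of the Julia set is a limit of vertices of $\bigcup_n \Delta_n$; this follows from expansivity of $N_p$ near the Julia set (using, e.g., that $\partial U_i$ is contained in $\Delta$ after the forward-invariant construction, and that preimages of a neighborhood of $\infty$ shrink). Equivalently, argue that the "mesh" of the partition induced by $\Delta_n$ on any basin $B_{\xi_i}$ tends to zero. (2) Since $\Delta_N$ is forward invariant ($N_p(\Delta_{N+1}) = \Delta_N$), the components of $\Cc \setminus \Delta_n$ satisfy a Markov property, so tracking the forward orbit of a component $W$ containing a given pole reduces the problem to finitely many "types". (3) Observe that $\infty \in \Delta_0$, so any component of $\Cc \setminus \Delta_n$ whose closure contains a pole $z_0$ must, upon applying $N_p$, map onto a component whose closure contains $\infty$; but $\infty$ is a vertex of $\Delta_0 \subset \Delta_{n-1}$, so that image component is "small" near $\infty$, and pulling back we get that the component near $z_0$ also shrinks as $n$ grows. (4) Combine: there are only finitely many poles (at most $d-1$, counted with multiplicity, since $\deg N_p = d$ and $\infty$ has $\deg p - 1 \ge d - 1$ preimages other than itself), and for each pole $z_0$ the increasing graphs $\Delta_n$ eventually enter every neighborhood of $z_0$; a compactness/finiteness argument then produces a single $N$ that works for all poles simultaneously.

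The hard part will be step (1)–(3), i.e.\ proving that $\Delta_n$ genuinely "grows toward" a given pole rather than merely accumulating elsewhere on the Julia set. The subtlety is that a priori the preimages $N_p^{-n}(\Delta)$ could have components that remain bounded away from $z_0$; one must use that $z_0$ is connected to $\infty$ \emph{through Fatou components} (the basin of some root, after finitely many iterations lands $V$ onto $U_i$, and $U_i$ is reached by $\Delta_0$), so that the relevant internal ray in $V$ is genuinely a preimage of an edge of $\Delta$. Concretely, I expect the argument to go: $V$ maps onto $U_i$ after $k$ steps, $U_i$ already contains the edges $\overline{\Gamma_i^j}$, so $N_p^{-k}$ of those edges includes an internal ray of $V$ terminating at $z_0 \in \partial V$ with $N_p(z_0) = \infty$ forced by the combinatorics; this internal ray, being a component of $N_p^{-k}(\Delta)$ meeting $\Delta_0$ (via its other endpoint, a preimage of $\xi_i$ that itself lies in $\Delta_{k-1}$ by induction), is contained in $\Delta_k$. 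The only remaining ingredient is that the "other endpoint" chain reaches back to $\Delta$ after boundedly many steps, which is exactly where the finiteness of the postsingular-type data and the attracting-critically-finite hypothesis are used; I would cite or adapt \cite[Theorem 3.4]{DMRS} for this combinatorial bookkeeping rather than redo it.
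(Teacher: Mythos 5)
There is a genuine gap, and it is essentially one of circularity. The statement you are asked to prove \emph{is} \cite[Theorem~3.4]{DMRS}: the paper does not prove it but quotes it verbatim from that reference. Your sketch reduces everything to ``the other endpoint chain reaches back to $\Delta$ after boundedly many steps,'' and for that step you propose to ``cite or adapt \cite[Theorem~3.4]{DMRS} for the combinatorial bookkeeping.'' That bookkeeping is not an auxiliary ingredient --- it is the entire content of the theorem. So as written your argument either assumes what it claims to prove, or collapses to the same bare citation the paper gives, dressed up as a proof.

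Beyond the circularity, the intermediate steps assume or assert exactly the hard facts. In your setup you take for granted that a pole $z_0$ lies on the boundary of a bounded component $V$ of a root basin which is joined to some immediate basin $U_i$ by a finite chain of touching components whose internal rays land compatibly; a priori a pole could fail to be accessible from the component of $N_p^{-n}(\Delta)$ containing $\Delta$ (recall $\Delta_n$ is only that connected component, not all of $N_p^{-n}(\Delta)$), and ruling this out is the point of the theorem. Likewise, step (1) --- that the ``mesh'' of the partition by $\Delta_n$ tends to zero, or that vertices of $\bigcup_n\Delta_n$ accumulate at every Julia point --- is not available at this stage: it would require precisely the connectivity statement you are trying to establish (preimage pieces of $\Delta$ that do not connect to $\Delta$ are discarded from $\Delta_n$), and expansivity near $J(N_p)$ alone does not give it. The actual proof in \cite{DMRS} proceeds quite differently: it is a counting argument on accesses to $\infty$ and on fixed points lying on the boundaries of complementary components of $\Delta_n$ (the \textsc{Claim 1}/Lemma~3.3 machinery that the present paper reuses to prove Proposition~\ref{Prop:Circles}), not a shrinking-neighborhood or density argument. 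If your intent is simply to invoke \cite[Theorem~3.4]{DMRS}, say so and stop, as the paper does; if the intent is to prove it, the finite-chain connectivity of each pole to $\Delta$ must be established by such a counting argument, and none of your steps (1)--(4) supplies it.
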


In the sequel, $N$ will denote the least such integer. 

As an immediate corollary we see that each prepole is contained in the Newton graph of sufficiently high level (see \cite[Corollary 3.5]{DMRS}).

\begin{corollary}[Prepoles in Newton graph]
\label{Cor:PrepolesInNewtonGraph}
Let $m \geqslant 0$ be an integer, and let $N$ be as in Theorem \ref{Thm:PolesInGraph}. Then every point in $N_p^{-(m+1)}(\infty)$ is a vertex of $\Delta_{m+N}$. \qed
\end{corollary}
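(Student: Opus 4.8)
The plan is to prove the statement by induction on $m$, using Theorem~\ref{Thm:PolesInGraph} as the base case and a one-step path-lifting argument for the inductive step. Throughout I would rely on two standard features of the Newton graphs: they are nested, $\Delta=\Delta_0\subseteq\Delta_1\subseteq\cdots$, with $N_p(\Delta_{n+1})=\Delta_n$; and any point of $\bigcup_{k\ge 0}N_p^{-k}(\infty)$ that happens to lie in $\Delta_n$ is automatically a \emph{vertex} of $\Delta_n$, because such a point lies in the Julia set $J(N_p)$ (as $\infty$ is a repelling fixed point and $J(N_p)$ is completely invariant) whereas the interior of every edge of $\Delta_n$ is an internal ray contained in the Fatou set. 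The base case $m=0$ is then immediate: the points of $N_p^{-1}(\infty)$ are $\infty$ (which lies in $\Delta\subseteq\Delta_N$) together with the poles (which lie in $\Delta_N$ by Theorem~\ref{Thm:PolesInGraph}), so all of them are vertices of $\Delta_N$; in short $N_p^{-1}(\infty)\subseteq\Delta_N$.

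For the inductive step I would assume every point of $N_p^{-(m+1)}(\infty)$ is a vertex of $\Delta_{m+N}$ and fix $w\in N_p^{-(m+2)}(\infty)$; then $v:=N_p(w)\in N_p^{-(m+1)}(\infty)\subseteq\Delta_{m+N}$, so $w\in N_p^{-1}(\Delta_{m+N})$. By the remark above, once we know that $w$ in fact lies in $\Delta_{m+1+N}$ it is automatically a vertex there (being an iterated preimage of $\infty$), so the whole task reduces to placing $w$ in $\Delta_{m+1+N}$. I would first record that $\Delta_{m+1+N}$ is exactly the connected component of $N_p^{-1}(\Delta_{m+N})$ containing $\Delta$: it is connected, contains $\Delta$, and satisfies $N_p(\Delta_{m+1+N})=\Delta_{m+N}$, hence $\Delta_{m+1+N}\subseteq N_p^{-1}(\Delta_{m+N})$; conversely any connected subset of $N_p^{-1}(\Delta_{m+N})\subseteq N_p^{-(m+1+N)}(\Delta)$ that contains $\Delta$ is contained in $\Delta_{m+1+N}$ by Definition~\ref{Def_ConcreteNewtonGraph}. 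Next I would pick a path $\gamma$ inside the connected graph $\Delta_{m+N}$ from $v$ to $\infty$, and lift it through the branched covering $N_p$ (which has the path-lifting property) to a path $\tilde\gamma$ with $\tilde\gamma(0)=w$; since $N_p\circ\tilde\gamma=\gamma$ takes values in $\Delta_{m+N}$, the lift $\tilde\gamma$ takes values in $N_p^{-1}(\Delta_{m+N})$. Its endpoint satisfies $N_p(\tilde\gamma(1))=\infty$, so $\tilde\gamma(1)\in N_p^{-1}(\infty)\subseteq\Delta_N\subseteq\Delta_{m+1+N}$ by the base case. Thus $\tilde\gamma$ joins $w$ to a point of $\Delta_{m+1+N}$ inside $N_p^{-1}(\Delta_{m+N})$, and since $\Delta_{m+1+N}$ is a connected component of the latter, $w\in\Delta_{m+1+N}$, which closes the induction.

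I expect the one genuine obstacle to be the component identification: a priori nothing forces $w$ into the component of $N_p^{-1}(\Delta_{m+N})$ that contains $\Delta$, and it is exactly the path-lifting step that rules out the alternative. What makes it work is that the lift is taken only \emph{one} step under $N_p$, so that it terminates at a point of $N_p^{-1}(\infty)$, whose membership in the graph is precisely the content of Theorem~\ref{Thm:PolesInGraph}. A naive ``one-shot'' variant --- observing that $N_p^m(w)$ is a pole, hence in $\Delta_N$, and then trying to lift a path all the way down under $N_p^m$ --- would instead end at a higher-order prepole whose membership in $\Delta_{m+N}$ is the very statement being proved, and would therefore be circular; this is why the induction on $m$ is genuinely needed. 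The degenerate case $v=\infty$, in which $w$ is itself a point of $N_p^{-1}(\infty)$, is subsumed: then $\gamma$ and $\tilde\gamma$ are constant and the conclusion $w\in\Delta_N\subseteq\Delta_{m+1+N}$ is immediate.
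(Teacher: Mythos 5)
Your argument is correct, and it is worth noting that the paper itself offers no proof of this corollary: it is quoted with a \qed from \cite[Corollary~3.5]{DMRS} and described as an immediate consequence of Theorem~\ref{Thm:PolesInGraph}, so your write-up supplies precisely the induction that makes the word ``immediate'' honest. Your two preparatory observations are exactly the right ones: the identification of $\Delta_{m+1+N}$ with the connected component of $N_p^{-1}(\Delta_{m+N})$ containing $\Delta$ (using $N_p(\Delta_{n+1})=\Delta_n\subset\Delta_{n+1}$ and the maximality in Definition~\ref{Def_ConcreteNewtonGraph}), and the remark that any iterated preimage of $\infty$ lying on $\Delta_n$ is automatically a vertex, since such points are in the Julia set while edge interiors are internal rays in Fatou components. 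The only place where your route differs from the usual argument (the one behind the DMRS proof) is how you force $w$ into the distinguished component: you lift a path from $v=N_p(w)$ to $\infty$ inside the connected graph $\Delta_{m+N}$, whereas the standard device is to note that every component of $N_p^{-1}(\Delta_{m+N})$ maps onto the whole connected graph $\Delta_{m+N}\ni\infty$, hence contains a point of $N_p^{-1}(\infty)\subset\Delta_N\subset\Delta_{m+N+1}$, so that $\Delta_{m+N+1}$ is in fact the only component meeting such points. Both devices accomplish the same one-step reduction, and your version correctly isolates why the induction (rather than a one-shot pull-back under $N_p^{\circ m}$) is needed. If you want the argument fully self-contained, add a sentence justifying path lifting under the branched covering $N_p$ (local normal form $z\mapsto z^k$ at critical points, or lift edge by edge within the finite graph $N_p^{-1}(\Delta_{m+N})$); as stated it is asserted rather than proved, but it is a standard fact and not a gap in substance.
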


\section{Renormalization of Newton maps}
\label{Sec:Renorm}


In this section we develop a combinatorial tool called \textit{Newton puzzle partition} that leads to the proof of Theorem~\ref{Thm:B}. Using this tool we then identify renormalization domains for Newton maps of polynomials, which will be a key step towards the proof of the Fatou--Shishikura injection (Theorem~\ref{Thm:FSI}).

\subsection{Separating circles in Newton graphs}

This subsection contains the key technical result of the section, Proposition \ref{Prop:Circles}. It shows that for all sufficiently high levels $n$ all critical points of $N_p$ are separated from $\infty$ by a subset of $\Delta_n$ given by an \emph{almost} disjoint union of topological circles (where \emph{almost} means that all intersections of these circles coincide with the set of finite fixed points of $N_p$; see Figures~\ref{Fig:Real} and~\ref{Fig:Circles}). This result will prove its importance in Subsection~\ref{SSec:Puz}, where it will be used to resolve boundary pinching problems for would-be puzzle pieces. It also implies that the Julia set is locally connected, and even has trivial fibers, at $\infty$ and all poles and prepoles.

\begin{proposition}[Newton graphs circle-separated]
\label{Prop:Circles}
For every attracting-critically-finite Newton map $N_p$ there exists an index $K \geqslant 1$ such that for every component $V$ of\/ $\Cc \sm \Delta$ there exists a topological circle $X_V \subset \Delta_K \cap \ovl V\cap \C$ that passes through all finite fixed points in $\partial V$, separates $\infty$ from all critical values of $N_p$ in $V$, and does not contain a point on a critical orbit, except the roots.
\end{proposition}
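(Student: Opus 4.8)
The plan is to build the separating circle $X_V$ inductively on the level $n$, using the structure of the Newton graph and two key inputs: Proposition~\ref{Prop:FixedPoles} (every complementary component of an immediate basin contains a fixed point), and Theorem~\ref{Thm:PolesInGraph} together with Corollary~\ref{Cor:PrepolesInNewtonGraph} (all poles, and more generally all prepoles up to a fixed bound, lie in Newton graphs of sufficiently high level). First I would fix the component $V$ of $\Cc\sm\Delta$ and analyze its boundary: $\partial V$ is a union of edges of the channel diagram $\Delta$, and by Proposition~\ref{Prop:AccessesHSS} these edges are internal rays of immediate basins joining finite fixed points $\xi_i$ to $\infty$. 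The idea is that at level $0$ the ``boundary'' of $V$ already passes through the relevant finite fixed points but is pinched at $\infty$ (and possibly runs along the same edge twice); the job of passing to higher levels is precisely to \emph{unpinch} at $\infty$ and to push the curve inward so that it becomes an honest embedded circle inside $\ovl V$ that still catches all the finite fixed points on $\partial V$ while leaving the critical values of $N_p$ lying in $V$ on the bounded side.

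The main construction step is the following. The two (or more) accesses to $\infty$ that bound $V$ correspond, near $\infty$, to sectors; I would take a preimage component under $N_p$ of a small ``equipotential-type'' crosscut in each adjacent immediate basin, i.e. use the Böttcher coordinates $\phi_i$ to draw, inside $U_i\cap V$, arcs of the form $\phi_i(\{|z|=r\})$ truncated to the sector facing $V$, and join them up through the preimages of $\infty$ (the poles) that by Theorem~\ref{Thm:PolesInGraph} are already vertices of $\Delta_N$ once $N$ is large enough. Concatenating these Böttcher arcs with edges of $\Delta_n$ running through the poles on $\partial V$ yields a closed curve in $\Delta_n\cap\ovl V$; the content of Theorem~\ref{Thm:PolesInGraph} is exactly that for $n\ge N$ every pole on the relevant part of the boundary is connected to $\infty$ inside the graph, so there are no ``gaps'' and the curve can be closed up. One then checks this curve is embedded (a topological circle): self-intersections could only occur at the finite fixed points $\xi_i$ or at the poles, and by choosing the Böttcher radius $r$ small and distinct arcs in distinct basins these can be separated, giving an arc through each $\xi_i$ rather than a pinch. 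Finally, since the critical values of $N_p$ in $V$ are finitely many points, none of which is $\infty$ or a fixed point, I would take $n$ (equivalently $K$) large enough — and $r$ small enough — that the circle $X_V$ constructed this way encloses a neighborhood of $\infty$ in $V$ disjoint from all of them; the uniformity of $K$ over the finitely many components $V$ of $\Cc\sm\Delta$ is automatic by taking a maximum.

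The step I expect to be the main obstacle is verifying that the concatenated curve is genuinely embedded and that it separates $\infty$ from \emph{all} critical values in $V$ simultaneously — the ``almost disjoint union of circles'' phenomenon in the proposition's surrounding discussion signals that pinching at the finite fixed points is exactly what one must control, and one has to rule out the circle for $V$ accidentally wrapping around a critical value, or two of the Böttcher arcs being forced to cross because the pole-to-$\infty$ connections in $\Delta_n$ route through a common edge. Handling this cleanly will likely require: (i) a careful local picture at $\infty$ describing how the edges of $\Delta_n$ emanating into $V$ are cyclically ordered (so that the arcs can be matched up without crossings), and (ii) an argument that pushing $r\to 0$ and $n\to\infty$ only shrinks the enclosed region toward a neighborhood of $\infty$ and of the boundary edges, hence eventually excludes the fixed finite set of critical values. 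I would also double-check the Markov/forward-invariance compatibility is not needed here — the statement only asserts existence of $X_V$ at each level $n\ge K$, not invariance of the family $\{X_V\}$ — which simplifies the bookkeeping considerably.
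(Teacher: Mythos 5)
Your construction does not produce a curve inside the Newton graph, which is what the proposition demands: $X_V \subset \Delta_n \cap \ovl V$. Every edge of $\Delta_n$ is an internal ray of a component of some basin $B_{\xi_i}$ (an iterated preimage of the fixed rays $\Gamma_i^j$); the graph contains no equipotential arcs at all, so the B\"ottcher arcs $\phi_i(\{|z|=r\})$ that form the backbone of your curve are simply not available (equipotentials do appear in the paper, but only later, in the truncation $\Deltat_0$ of the puzzle, not in $X_V$). Relatedly, an equipotential arc with $r>0$ does not pass through $\xi_i$, so the requirement that $X_V$ pass through all finite fixed points on $\partial V$ is not met by the curve as described; and there are no poles on $\partial V$, since $\partial V\subset\Delta$ and the vertices of $\Delta$ are only $\infty,\xi_1,\dots,\xi_d$ --- the poles lie in the interiors of the complementary components.

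More importantly, the step you wave through as ``no gaps by Theorem~\ref{Thm:PolesInGraph}'' is exactly where the work lies. That theorem connects each pole to $\infty$ inside $\Delta_N$; what is needed here is an arc in $\Delta_n\sm\Delta$, inside $\ovl V$, joining two \emph{distinct} finite fixed points of $\partial V$ while avoiding $\infty$. The paper obtains this via the pole-free bridge lemma (Lemma~\ref{Lem:PoleFree}): it compares boundary parametrizations of nested unbounded components $V_n\supset V_{n+1}$ of $\Cc\sm\Delta_n$ sharing the same accesses to $\infty$, invokes the pole-free-interval claim from the proof of \cite[Theorem 3.4]{DMRS} to find a boundary interval on which the induced circle map $\tau_n$ is a homeomorphism, and uses a local access-counting argument at the fixed points (where $N_p$ is locally $z\mapsto z^{k_\xi}$) to produce the bridge; then the angle control of Lemma~\ref{Lem:TermEdge} together with the sweeping argument shows that the regions cut off by successive bridges shrink onto the closures of the two fixed rays, which simultaneously yields that the two endpoints are distinct fixed points, that each unbounded component eventually has a single access to $\infty$, and that the finitely many critical values in $V$ (which cannot lie on $\Delta$) end up on the bounded side of $X_V$. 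None of this is addressed in your outline, so the proposal does not constitute a proof; it would have to be rebuilt around arcs of the graph itself (the bridges) rather than equipotential crosscuts.
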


Such a circle $X_V$ connects the roots on $\partial V$ in the circular order induced by the channels to $\infty$ (this circular order is  well defined even when some roots on $\partial V$ have several channels: at most two of these channels are in $\partial V$, and if there are two then they are adjacent in the circular order as far as $V$ is concerned). The connection of adjacent channels is trivial when their immediate basins have a common pole or prepole. If not, then the immediate basins are connected by finitely many edges in $\Delta_K$ called ``bridges'', close enough to $\infty$ so that the resulting circle surrounds all critical values in $V$. In other words, for any two roots with adjacent channels, they have a fixed ray each within  the channels that together bound an access of $V$ to $\infty$, and the bridge is a perturbation within $V\subset\C$ of these two fixed rays, small enough so that the conclusion concerning the critical values is satisfied. 
The union of these bridges for all pairs of adjacent channels with their circular order yields the circle $X_V$. Pictures of such circles are shown in Figure~\ref{Fig:Real} (within the dynamics of an actual Newton map, in the  special case that every root  has a single channel, i.e.\ $\Delta$ does not disconnect $\C$) and in Figure~\ref{Fig:Circles} (sketch of the graph when $\Delta$ does disconnect $\C$). 
 
The proof of Proposition~\ref{Prop:Circles}, and especially its key Lemma~\ref{Lem:SweepOut}, parallels that of \cite[Theorem 3.4]{DMRS} in many ways, but we cannot simply quote these results without losing  properties that we need to keep track of. Hence, in order to make our presentation self-contained and more readable, we have to reproduce some arguments from the proof of \cite[Theorem 3.4]{DMRS}.

\begin{remark}
It is not hard to state the result of Proposition~\ref{Prop:Circles} for general Newton maps (in terms of touching Fatou components), using Proposition~\ref{Prop:ACF}.
\end{remark}

\begin{figure}[htbp]
\begin{center}
\includegraphics[scale=0.75, trim=20 20 20 20]{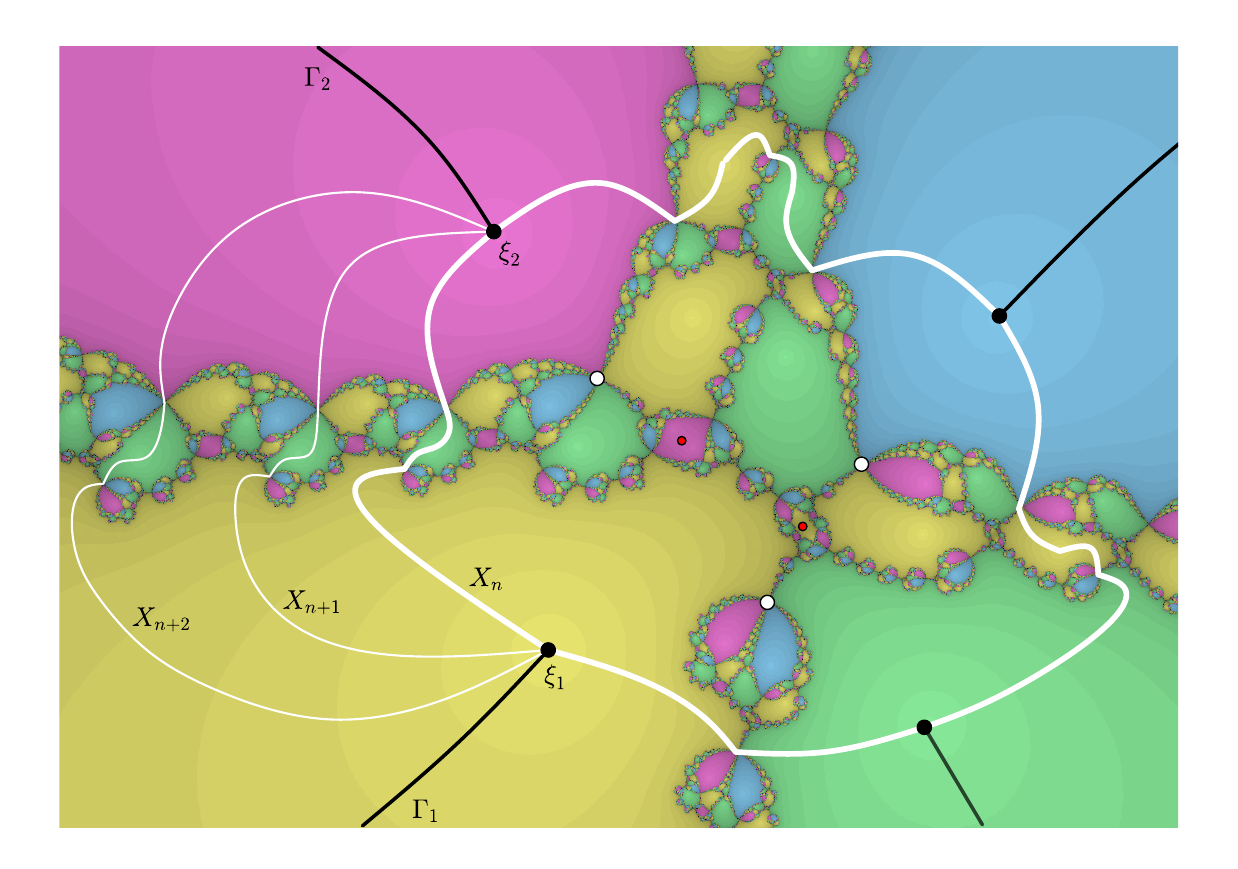}
\caption{Example of the dynamical plane of a degree 4 Newton map. The four roots are marked by black dots, the three poles by white dots, and the two  ``free'' critical points that are not roots by red dots. The channel diagram $\Delta$ is the union of the black lines. In this example, there is only one component $V$ of $\Cc \sm \Delta$. The topological circle $X_V \subset \Delta_n \cap \ovl V$ is drawn as a thick white line (for $n=2$). In thin white, $X_{n+1}$ and $X_{n+2}$ are two successive lifts of the bridge $X_n \subset X_V$ connecting $\xi_1$ and $\xi_2$.}
\label{Fig:Real}
\end{center}
\end{figure}

Proposition~\ref{Prop:Circles} immediately implies the following result for attracting-critically-finite Newton maps (this result was shown in \cite[Lemma 4.9]{DMRS} in the special case of \emph{postcritically fixed} Newton maps).

\begin{corollary}[Connectivity in $\C$]\label{Cor:ConnectedInC}
If $N_p$ is an attracting-critically-finite Newton map, then for all sufficiently large $n$, the graph $\ovl{\Delta_n\sm\Delta}$ is connected.\qed
\end{corollary}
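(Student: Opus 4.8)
The plan is to read off the connectedness of $\ovl{\Delta_n\sm\Delta}$ from Proposition~\ref{Prop:Circles} together with the connectedness of $\Delta_n$, in three moves: locate $\ovl{\Delta_n\sm\Delta}$ inside $\Delta_n$, use the circles $X_V$ to tie together the finite fixed points, and then sweep up everything else. Throughout write $\Gamma_n:=\ovl{\Delta_n\sm\Delta}$.

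First I would record where $\Gamma_n$ can meet $\Delta$. Every edge of $\Delta$ is an internal ray of an immediate basin joining a root $\xi_i$ to $\infty$, so every edge of $\Delta$ contains $\infty$; hence $\Gamma_n\cap\Delta$ is contained in the vertex set $\{\infty,\xi_1,\dots,\xi_d\}$. I would then upgrade this to $\infty\notin\Gamma_n$ for every $n\ge 0$. Since $\infty$ is a non-critical repelling fixed point, $N_p$ is a local homeomorphism near $\infty$, and each fixed ray is forward invariant; by induction on $n$, using $N_p(\Delta_{n+1})=\Delta_n$, one checks that any edge of $\Delta_{n+1}$ incident to $\infty$ maps to a fixed ray and, being the unique local branch of its preimage through $\infty$ inside the corresponding access to $\infty$, coincides with that fixed ray. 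Thus the only edges of $\Delta_n$ at $\infty$ are the edges of $\Delta$, so $\infty\notin\Gamma_n$ and therefore $\Gamma_n\cap\Delta\subseteq\{\xi_1,\dots,\xi_d\}$.

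Next, for $n\ge K$ I would bring in Proposition~\ref{Prop:Circles}. For each complementary component $V$ of $\Cc\sm\Delta$ it supplies a topological circle $X_V\subset\Delta_n\cap\ovl V$ disjoint from $\infty$ that passes through all finite fixed points on $\partial V$. Since $X_V$ is a cycle in the graph $\Delta_n$ and misses $\infty$ while every edge of $\Delta$ contains $\infty$, the circle $X_V$ uses no edge of $\Delta$, i.e.\ $X_V\subseteq\Gamma_n$. Hence any two roots lying on a common $\partial V$ are in one component of $\Gamma_n$. To propagate this, I would prove the elementary plane-graph fact that the incidence graph $H$ with vertices $\xi_1,\dots,\xi_d$ and an edge whenever two roots lie on the boundary of a common face of $\Delta$ is connected: $\infty$ lies on the boundary of every face (each face boundary contains an edge, and every edge contains $\infty$), so reading the faces cyclically around $\infty$, consecutive faces are separated by an edge whose finite endpoint $\xi_i$ lies on the boundary of both, hence consecutive faces share a root; since every face thus occurs around $\infty$ and every root lies on some face boundary, all roots are joined in $H$. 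Combining, all of $\xi_1,\dots,\xi_d$ lie in a single component $C'$ of $\Gamma_n$.

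Finally I would close the loop using connectedness of $\Delta_n$. Any component $C$ of $\Gamma_n$ with $C\cap\Delta=\varnothing$ would be clopen in $\Delta_n$ — it is closed, and open because near any of its points, which lie off the closed set $\Delta$, the $1$-complex $\Delta_n$ coincides with the locally connected set $\Gamma_n$ — forcing $C=\Delta_n$, contradicting $\Delta\ne\varnothing$. Hence every component of $\Gamma_n$ meets $\Delta$, and by the first step it then contains some $\xi_i$, so it equals $C'$. Thus $\Gamma_n$ is connected for all $n\ge K$. The step I expect to be the main obstacle is making the inductive argument $\infty\notin\Gamma_n$ fully rigorous — ruling out that any new edge of a higher Newton graph lands at $\infty$ — while the rest is either a direct application of Proposition~\ref{Prop:Circles} or routine plane-graph combinatorics.
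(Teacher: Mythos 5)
Your proof is correct and follows exactly the route the paper intends: the corollary is stated as an immediate consequence of Proposition~\ref{Prop:Circles}, and your argument --- the circles $X_V$ lie in $\ovl{\Delta_n\sm\Delta}$ and chain all the roots together via shared roots of consecutive faces around $\infty$, while every component of $\ovl{\Delta_n\sm\Delta}$ is clopen in $\Delta_n$ unless it meets $\Delta$, necessarily in some $\xi_i$ since $\infty\notin\ovl{\Delta_n\sm\Delta}$ --- is precisely the fleshed-out version of that deduction. The only step you gloss over (the ``hence'' locating $\ovl{\Delta_n\sm\Delta}\cap\Delta$ in the vertex set) is that no vertex of $\Delta_n$ lies in the interior of a fixed ray; this holds because points of an open fixed ray remain in $U_i\sm\{\xi_i\}$ under iteration and hence are never iterated preimages of $\infty$ or of a root.
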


We start our preparation for the proof of Proposition~\ref{Prop:Circles} by introducing some notation.
Let $\Gamma \subset \Cc$ be a connected graph and let $x$ be a vertex of $\Gamma$. Denote by $\mathcal A_x(\Gamma)$ the set of all accesses to $x$ for $\Gamma$ (see Definition~\ref{Def:SimpleAccess}). Likewise, if $G$ is a component of $\Cc \sm \Gamma$ with $x \in \partial G$, we define $\mathcal A_x(G)$ to be the set of all accesses to $x$ in $G$.

Since $N_p$ is a local homeomorphism in some neighborhood of $\infty$ and $\Delta$ is invariant under $N_p$, we have $\mathcal A_\infty(\Delta) = \mathcal A_\infty (\Delta_n)$ for every $n \geqslant 0$; we shorten the notation by setting $\mathcal A_\infty := \mathcal A_\infty(\Delta)$. Every access in $ \mathcal A_\infty$ corresponds to a unique pair of adjacent fixed rays $\Gamma^k_i \neq \Gamma^l_j$ (here \textit{adjacent} means with respect to the cyclic order of edges at $\infty$). If $\xi_i$ and $\xi_j$ are the endpoints of these rays other than $\infty$, then $\xi_i \neq \xi_j$ by Proposition~\ref{Prop:FixedPoles}. Clearly at most two distinct accesses in $\mathcal A_\infty$ can be bounded by rays starting from the same pair of fixed points.

\subsubsection{Proving Proposition~\ref{Prop:Circles} with supporting lemmas}
We always assume  that $n \geqslant N$. For a given access $a \in \mathcal A_\infty$, let $V_n$ be the unique unbounded component of $\Cc \sm \Delta_n$ with $a \in \mathcal A_\infty(V_n)$. Define $V_{n+1}$ to be the component of $N_p^{-1}(V_n)$ that has the same access $a$. Such a component necessarily exists since the inverse branch of $N_p$ fixing $\infty$ preserves $a$. Moreover, since for all $n\ge N$ the Newton graph $\Delta_n$ contains all poles (Theorem~\ref{Thm:PolesInGraph}), the component $V_{n+1}$ is a topological disk. And since the Newton graph is forward invariant, we also have $V_{n+1} \subset V_n$.   

Inductively, we construct a sequence of nested topological disks $V_N \supset V_{N+1} \supset \ldots$ with $a \in \mathcal A_\infty(V_n)$ and $N_p \colon V_{n+1} \to V_n$ proper for every $n \geqslant N$. 

\begin{lemma}[Basic properties of $V_n$ for $n\ge n_a$]
\label{Lem:BasicProperties_V_n}
There is an index $n_a \ge N$ such that for $n\ge n_a$ all $V_n$ have the same accesses to $\infty$ and the same accesses to the same poles, and their boundaries contain the same fixed rays (within immediate basins).
\end{lemma}
\begin{proof}
Since $V_n$ are nested and $\partial V_n$ consists of edges in $\Delta_n$ and their endpoints, the statement about the fixed rays within immediate basins is clear. The same is true for the accesses to $\infty$: $\left(\mathcal A_\infty(V_n)\right)_{n=N}^\infty$ is a non-increasing (under inclusion) sequence of finite sets (bounded below by $\{a\}$), and accesses to $\infty$ are bounded by fixed rays in immediate basins. The claim about accesses to poles follows.
\end{proof}

The main ingredient in the proof of Proposition~\ref{Prop:Circles} is the following lemma, which we now state using the notation introduced above.

\begin{lemma}[Bridges accumulate only at two fixed rays]
\label{Lem:SweepOut}
There exists $k_a > n_a$ such that for each $n \ge k_a$ there exists a pair of roots $\xi_1, \xi_2$ together with their adjacent fixed rays $\Gamma_1, \Gamma_2$, all in $\partial V_n$, such that 
\begin{enumerate}
\item
\label{It:ML1}
there is a unique Jordan arc $X_n \subset \partial V_n \sm \Delta$, called a \emph{bridge}, that joins $\xi_1$ to $\xi_2$;
\item
\label{It:ML2}
$N_p(X_{n+1}) = X_n$ and the restriction $N_p \colon X_{n+1} \to X_n$ is a homeomorphism; 
\item
\label{It:ML4}
and if $W_n$ is the component of\, $\Cc \sm (\ovl X_n \cup \ovl \Gamma_1 \cup \ovl \Gamma_2)$ containing $V_n$, then 
\begin{equation}
\label{Eq:Intersect}
\bigcap_{n \ge k_a} \ovl W_n = \ovl \Gamma_1 \cup \ovl \Gamma_2.
\end{equation}
\end{enumerate}
\end{lemma}

\begin{remark}
Bridges connect the boundaries of the immediate basins of roots through chains of edges of $\Delta_n\cap\C$ (i.e.\ within components of the basins); see Figure~\ref{Fig:Real} for an illustration. Their existence is an ever-important feature of the dynamics of Newton maps.  Since $\ovl X_n\cup\ovl\Gamma_1\cup\ovl \Gamma_2\subset\partial V_n$, we have $V_n\subset W_n$, but this may be  a proper inclusion: there may be parts of $\partial V_n$ that ``stick in'' to $W_n$ (see Figure~\ref{Fig:Bridges}).
\end{remark}

We postpone the proof of Lemma~\ref{Lem:SweepOut} to the next subsection, and first derive the following corollary to the lemma and use both the corollary and the lemma to prove Proposition~\ref{Prop:Circles}.

\begin{corollary}[Unique access and distinct roots]
\label{Cor:UniqueAccess}
In the notation of Lemma~\ref{Lem:SweepOut}, for all $n\ge n_a$, the domain $V_n$ has a single access to $\infty$; this access is bounded by the two fixed rays $\Gamma_1, \Gamma_2 \subset \partial V_n$ starting at $\xi_1$ resp.\ $\xi_2$, and $\xi_1 \neq \xi_2$.
\end{corollary}
 
\begin{proof}
If there were more than one access to $\infty$ in $V_n$, then, since $V_n \subset W_n$, the set $W_n$ would contain a root distinct from $\xi_1$ and $\xi_2$ for all $n \ge k_a$. This is impossible because of Lemma~\ref{Lem:SweepOut} \eqref{It:ML4}. Hence $\mathcal A_\infty(V_n) = \{a\}$ for all $n \ge k_a$. But since for $n \ge n_a$ the number of accesses to $\infty$ in $V_n$ is constant (Lemma~\ref{Lem:BasicProperties_V_n}), we conclude that $\mathcal A_\infty(V_n) = \{a\}$ must hold for $n \ge n_a$. 

A similar reasoning applies to conclude $\xi_1 \neq \xi_2$: if that was not the case, then each of the unbounded components of $\Cc \sm (\ovl \Gamma_1 \cup \ovl \Gamma_2)$ would contain at least one root distinct from $\xi_1 = \xi_2$ (Proposition~\ref{Prop:FixedPoles}). This would imply $|\mathcal A_{\infty}(V_n)| > 2$ for all $n \ge k_a$, a contradiction. Again, the conclusion $\xi_1 \neq \xi_2$ must hold for all $n \ge n_a$.
\end{proof}

\begin{proof}[Proof of Proposition~\ref{Prop:Circles}]
Since by Corollary~\ref{Cor:UniqueAccess} the domains $V_n$ depend on the chosen access $a$, we adjust the notation for the corresponding bridges to $X_{a,n}$; these bridges are given by Lemma~\ref{Lem:SweepOut} \eqref{It:ML1}. Since $\xi_1 \neq \xi_2$ (Corollary~\ref{Cor:UniqueAccess}), the closure of each bridge is a Jordan arc.

For any two accesses $a\neq a' \in \mathcal A_\infty$, we may require $ X_{a,n} \cap X_{a', n} = \emptyset $ for all $n \ge k_a$, possibly by increasing $k_a$: this is because, by \eqref{Eq:Intersect}, any particular $X_{a, n}$ can intersect only finitely many elements of the sequence $(X_{a', n})$. Since each bridge is the preimage of the previous one under $N_p$ (Lemma~\ref{Lem:SweepOut} \eqref{It:ML2}), further increasing $k_a$ if necessary, we can make sure that no $X_{a,n}$ contains a point on the orbit of a critical point. By increasing $k_a$ even more, we can also guarantee that $W_n$ does not contain a critical value of $N_p$ for all $n \ge k_a$; this can be done because of Lemma~\ref{Lem:SweepOut} \eqref{It:ML4}. (Note that there might be critical values of $N_p$ in $W_n \sm V_n$; this is why the final increase of $k_a$ might be necessary.)

Finally, Proposition~\ref{Prop:Circles} follows by setting $K := \max_{a \in \mathcal A_\infty} k_a$. With this choice,
\[   
X_V := \bigcup_{a \in \mathcal A_{\infty} (V)} \ovl{X_{a,K}}
\] 
is the required topological circle for the component $V$ under consideration. This concludes the proof. 
\end{proof}

\subsection{Proving Lemma~\ref{Lem:SweepOut}}

Let us start with an outline of the proof together with its supporting lemmas. Our goal is for every sufficiently large $n$ to identify a pair of roots $\xi_1$ and $\xi_2$ in $\partial V_n$ together with a unique Jordan curve $X_n$ (a bridge) connecting these roots in $\partial V_n \cap \C$, and to show that the bridges are mapped homeomorphically to bridges and that the ``sweeping'' property \eqref{Eq:Intersect} holds true. In Subsection~\ref{SSec:BP}, we start by parameterizing the boundary of each $V_n$ by $\Circle$ respecting the dynamics of the Newton map. This parametrization will allow us to find a finite union $J \subset \Circle$ of intervals and a pair of roots $\xi_1$, $\xi_2$ together with their adjacent fixed rays $\Gamma_1, \Gamma_2$, all in $\partial V_n$, such that $J$ parameterizes the bridge $X_n \subset \partial V_n \cap \C$ which together with $\Gamma_1$ and $\Gamma_2$ forms a closed loop. This loop starts from $\infty$ to $\xi_1$ along the fixed internal ray $\Gamma_1$ in $U_{\xi_1}$, then traverses $X_n$, first along a non-fixed internal ray from $\xi_1$ to a point on $\partial U_{\xi_1}$, then along some other edges in $\Delta_n \cap \C$, and finally along another non-fixed internal ray from a point on $\partial U_{\xi_2}$ to $\xi_2$; the loop closes up along the fixed internal ray $\Gamma_2$ from $\xi_2$ to $\infty$. 
In this loop, the bridge will be parameterized using the degree $1$ interval $I_1 \subset \Circle$ from Subsection~\ref{SSec:Deg1}. It is straightforward to define a bridge in the case $\xi_1 \neq \xi_2$ (done in Subsection~\ref{SSec:Br1}), and it is more involved when $\xi_1 = \xi_2$ (done in Subsection~\ref{SSec:Br2}). In the latter case, additional analysis is required to define the bridge uniquely. This analysis is done in Subsections~\ref{SSec:Mapping} and~\ref{SSec:Inclusion}. Finally, in Subsection~\ref{SSec:Props}, we once again use the results of the latter two subsections to establish conclusion \eqref{It:ML2} and property~\eqref{Eq:Intersect} of the lemma. This will conclude the proof.
  
\subsubsection{Boundary parametrization.} 
\label{SSec:BP} 
For every $n \ge n_a \ge N$, the boundary $\partial V_n$ is the image of $\Circle$ under a piecewise analytic surjection $\gamma_n \colon \Circle \to \partial V_n$ (called a \textit{boundary parametrization} of $\partial V_n$) such that for every edge $e$ of $\partial V_n$ the preimage $\gamma_n^{-1}(e)$ is one or two disjoint intervals in $\Circle$ that are mapped diffeomorphically onto $e$ (the case of two intervals is realized if a particular boundary arc belongs to the boundary of $V_n$ on both sides) and such that the order in which $\gamma_n$ visits the edges of $\partial V_n$ is given by the traversal of $\partial V_n$ along the ideal boundary of $V_n$. It is no loss of generality to assume that $\gamma_n$ traverses $\partial V_n$ only once and in forward direction, in the sense that the winding number of $\gamma_n$ around an inner point of $V_n$ is equal to $1$. If we also have a boundary parametrization $\gamma_{n+1} \colon \Circle \to \partial V_{n+1}$ for $V_{n+1}$, there is  a unique orientation-preserving circle endomorphism $\tau_n \colon \Circle \to \Circle$ such that we have a commutative diagram
\begin{equation}
\label{Eq:FuncRel}
\begin{picture}(120,55)(0,0)
\put(0,40){$\Circle\quad  \rule{0pt}{0pt}\raisebox{4pt}{\vector(1,0){40}} \quad \rule{0pt}{0pt} \partial V_{n+1}$}
\put(29,49){$\gamma_{n+1}$}
\put(82,35){\vector(0,-1){25}}
\put(86,21){$N_p$}
\put(0,0){$\Circle \quad \raisebox{4pt}{\vector(1,0){40}} \quad  \partial V_n$}
\put(36,9){$\gamma_{n}$}
\put(5,35){\vector(0,-1){25}}
\put(9,21){$\tau_n$}
\end{picture}
\end{equation}
(this is easiest wherever $\gamma_n$ has a unique preimage of $N_p(\gamma_{n+1}(t))$, and such points are except at edges that are traversed in both directions). 
Since both $\gamma_n$ and $\gamma_{n+1}$ have winding numbers equal to $1$, it follows that $\deg \tau_n = \deg (N_p \colon V_{n+1} \to V_n)$. We call $\tau_n$ the \emph{map that connects the boundary parametrizations $\gamma_n$ and $\gamma_{n+1}$}. 
We will need the freedom to precompose $\tau_n$ with any orientation preserving homeomorphism; the diagram above clearly continues to commute by precomposing $\gamma_{n+1}$ with the same homeomorphism.

All $V_n$ have the same accesses to $\infty$ (Lemma~\ref{Lem:BasicProperties_V_n}); denote their number by $m$. Fix some boundary parametrization $\gamma_n \colon \Circle \to \partial V_n$ of $V_n$. This defines $3m$ points 
\[
t_1, s_1, t_1', t_2, s_2, t_2', \ldots, t_m, s_m, t_m' \in \Circle
\] 
such that 
\begin{itemize}
\item
$\{s_1, \ldots, s_m\} = \gamma_n^{-1}(\{\infty\})$;
\item
$\gamma_n(t_i)$ and $\gamma_n(t_i')$ are roots of $p$ for all $i$;
\item
$\gamma_n([t_i,s_i])$ and $\gamma_n([s_i,t'_i])$ are the two fixed rays connecting two roots to $\infty$ (see Figure~\ref{Fig:Possibilities}). The union of all these fixed rays contains the boundaries of all accesses in $A$, and, conversely, the boundary of every access in $A$ intersects a pair of such rays.
\end{itemize}
Indeed, the $s_i$ are uniquely defined as preimages of $\infty$, and since $\infty$ can be reached in $\partial V_n$ only  along fixed rays within immediate basins, this defines the points $t_1,\dots,t_m$ and $t'_1,\dots,t'_m\in\Circle$. 
These points are defined uniquely up to cyclic order once the boundary parametrization is chosen. It is clear that all $3m$ points are distinct, except perhaps $t'_i=t_{i+1}$. But this is impossible because for $n \ge 1$ there are no adjacent edges in $\Delta_n$ connecting the same root to $\infty$ (see Proposition~\ref{Prop:FixedPoles}), so all these $3m$ points are indeed distinct.
Therefore, the set $I_i := [t_i', t_{i+1}]$ is a non-degenerate interval for each $i \in \{1, \ldots, m\}$. 

For each $i$, the restriction $\gamma_n|_{[t_i, t_i']}$ is an embedding. 
However, it is possible that $\gamma_n(t_i') = \gamma_n(t_{i+1})$ or even $\gamma_n([s_i, t_i']) = \gamma_n([t_{i+1}, s_{i+1}])$ or $\gamma_n([t_i, s_i]) = \gamma_n([s_{i-1}, t_{i-1}'])$ (the same edge to $\infty$ may be parametrized twice, in reverse  orientation; see the example in Figure~\ref{Fig:Possibilities}).

\begin{figure}[htbp]
\begin{center}
\includegraphics[width=.75\textwidth, trim=32 20 35 28, clip]{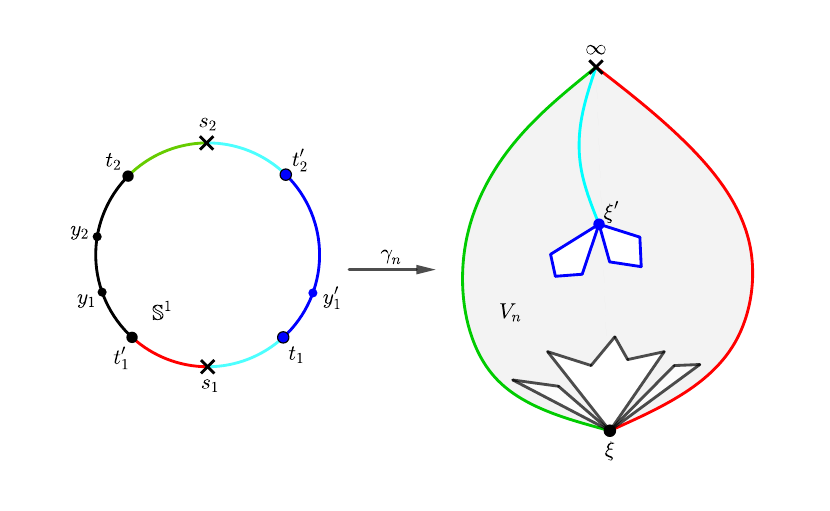}\hfill
\caption{Sketch of a situation, for $m=2$, when the boundary parametrization $\gamma_n \colon \Circle \to \partial V_n$ is such that $\gamma_n(t_1') = \gamma_n(t_2) = \xi$ and $\gamma_n\left([t_1,s_1]\right) = \gamma_n\left(\left[s_2, t_2'\right]\right)$ (the corresponding fixed internal ray connecting $\xi'$ to $\infty$ is shown in cyan). Here $\xi$ and $\xi'$ are the roots. The domain $V_n$ is shaded in grey. In this example, there are $4$ distinct accesses to $\xi$ and $3$ distinct accesses to $\xi'$ from within $V_n$. This means that $I_1 = [t_1', t_2]$ (in black) contains $4$ distinct points $t_1', y_1, y_2, t_2$ that are mapped by $\gamma_n$ to $\xi$; similarly, $I_2 = [t_2', t_1]$ (in blue) contains $3$ distinct points $t_2', y_1', t_1$ that are mapped to $\xi'$.}
\label{Fig:Possibilities}
\end{center}
\end{figure}  

\subsubsection{Identifying degree 1 interval.} 
\label{SSec:Deg1}
{A priori}, the $3m$ points $t_1, s_1, t_1', \ldots$ depend on $n$. But since the fixed points along $\partial V_n$ and $\partial V_{n+1}$ are the same, we can choose the boundary parametrization $\gamma_{n+1} \colon \Circle \to \partial V_{n+1}$ such that the corresponding map $\tau_n \colon \Circle \to \Circle$ that connects the boundary parametrizations $\gamma_{n+1}$ and $\gamma_n$ satisfies $\tau_n(s_i)=s_i$, $\tau_n(t_i)=t_i$ and $\tau_n(t_i') = t_i'$ for all $i$ (this can be accomplished by precomposing $\tau_n$ and $\gamma_{n+1}$ by the same circle homeomorphism). 

Defined this way, $\tau_n$ maps $[t_i, t_i']$ homeomorphically onto itself for each $i$. Going by induction in the same way, we define the mappings $\gamma_n \colon \Circle \to \partial V_n$ and $\tau_n \colon \Circle \to \Circle$ for all $n \ge n_a$. All these maps have the property that the points $s_i$, $t_i$, and $t'_i$ are fixed under the $\tau_n$, and are mapped to the same fixed points of $N_p$ ($\infty$ resp.\ roots) under all  $\gamma_n$.

\begin{claim}
\label{Claim:HomeoInterval}
There exists an $i$ such that $\tau_n|_{I_i}$ is an orientation-preserving homeomorphism of $I_i$ onto itself for all $n\ge n_a$.
\end{claim}
\begin{proof}
We will use the fact, established (in greater generality) in \cite[Lemma~3.3]{DMRS}, that the domain $V_{n+1}$ has $\deg \tau_n$ distinct accesses to $\infty$. Thus $\tau_n$ has degree $m$ for all $n\ge n_a$. 

Since $\tau_{n_a}$ restricted to any $I_i$ must fix the endpoints of $I_i$, its image either equals $I_i$ or all of $\Circle$. Each $s_k$ has one preimage at itself, and then at least one further preimage in each interval $I_i$ on which $\tau_{n_a}$ is not a homeomorphism. Since there are $m$ such intervals and $\deg \tau_n = m$, the claim follows for $n=n_a$.

Choose labels so that $i = 1$ is such that $\tau_{n_a}|_{I_i}$ is a homeomorphism. We now proceed by induction. 

Since $\tau_n|_{I_1}$ is a homeomorphism and $\gamma_n(t)=\infty$ only for $t=s_i$, \eqref{Eq:FuncRel} implies that the restriction $\gamma_{n+1}\colon I_1\to\partial V_{n+1}$ does not pass through a pole. It follows from Lemma~\ref{Lem:BasicProperties_V_n} that $\gamma_{n+2}\colon I_1\to\partial V_{n+2}$ cannot pass through a pole either, so $\tau_{n+1}|_{I_1}$ is a homeomorphism as well. This completes the claim.
\end{proof}

Again choose labels so that $I_1 = [t_1', t_2]$ is the degree one interval given by Claim~\ref{Claim:HomeoInterval}. Denote $\xi_1 := \gamma_n(t_1')$ and $\xi_2 := \gamma_n(t_2)$ the roots corresponding to the endpoints of $I_1$. Further introduce $\Gamma_1 := \gamma_n((s_1, t_1'])$ and $\Gamma_2 := \gamma_n([t_2,s_2))$; these are the fixed rays connecting $\xi_1$ resp. $\xi_2$ to $\infty$ within $\partial V_n$.

\subsubsection{Defining the bridge in the case $\xi_1 \neq \xi_2$.}  
\label{SSec:Br1}
Define $X_n$ to be the unique Jordan arc in $\gamma_n (I_1)$ connecting $\xi_1$ and $\xi_2$ (see Figure~\ref{Fig:Bridges}).  

In the case $\xi_1 = \xi_2 =: \xi$, the image $\gamma_n(I_1)$ might contain several loops connecting $\xi$ to itself (compare Figure~\ref{Fig:Possibilities}: there are three loops in the image of $I_1$). We need some additional analysis to pick a correct loop; this will be done over the next two steps. 

\begin{figure}[htbp]
\begin{center}
\includegraphics[width=0.8\textwidth, trim=30 30 30 30]{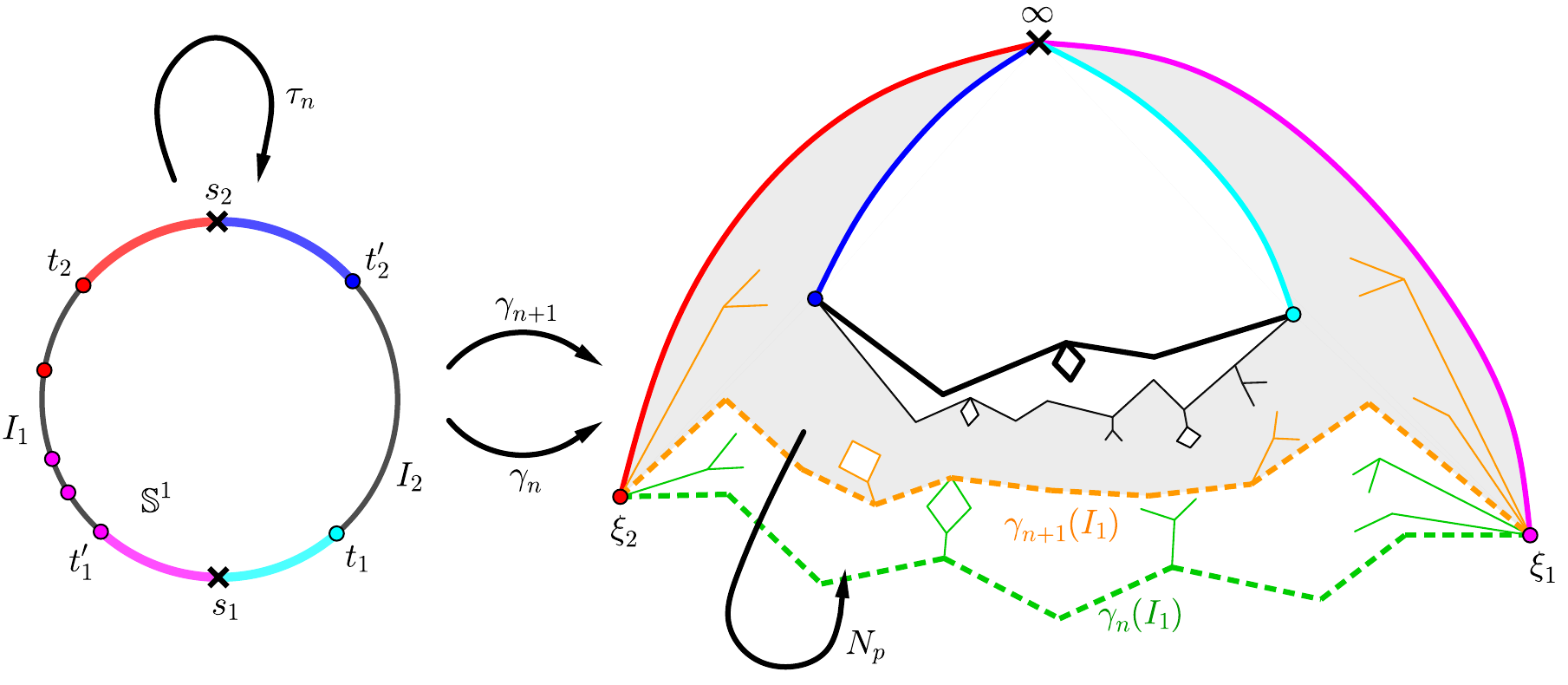}\hfill
\caption{Construction of bridges in the proof of Lemma~\ref{Lem:SweepOut}. The case $m=2$ and $\xi_1 \neq \xi_2$ is shown. The domain $V_{n+1}$ is depicted in grey, while the thick red, blue, black, magenta and green (both solid and dashed) lines bound the domain $V_n \supset V_{n+1}$. The image of the interval $I_1 = [t_1', t_2]$ under $\gamma_n$ is shown in green. The respective image for $\gamma_{n+1}$ in $\partial V_{n+1}$ are shown in orange. The bridge $X_n$ connecting two roots $\xi_1$ and $\xi_2$ is the dashed green line in $\gamma_n(I_1)$; the bridge $X_{n+1}$ is the dashed orange line.}
\label{Fig:Bridges}
\end{center}
\end{figure}  

\subsubsection{Mapping of accesses to roots in the image of degree 1 interval.}  
\label{SSec:Mapping}
Let $\xi \in \gamma_n(I_1)$ be a root. Then $V_n \cap U_\xi$ is a disjoint union of sectors $B_{n,i} = B_{n,i}(\xi)$, each bounded by a pair of (pre-)fixed internal rays and a piece of the boundary of $U_\xi$ (see Figure~\ref{Fig:MappingOfAccesses}). There is a one-to-one correspondence between the sectors $B_{n,i}$ and the accesses to $\xi$ within $V_n$ (see Definition~\ref{Def:SimpleAccess}); to simplify terminology, we will call $B_{n,i}$ an access (to $\xi$). 

For each $B_{n,i}$ there exists a unique point $v_{n,i} = v_{n,i}(\xi) \in I_1$ and a non-degenerate interval $J_{n,i} = J_{n,i}(\xi) \subset [s_1, t_1'] \cup I_1 \cup [t_2,s_2]$ such that $\gamma_n(v_{n,i}) = \xi$ and $\gamma_n(J_{n,i}) = \partial B_{n,i} \cap U_\xi$. Choose the labeling so that $v_{n,1} < v_{n,2} < \ldots < v_{n,s}$, where $s = s(\xi)$ is the number of accesses to $\xi$ in $V_n$. 

A priori, the number $s$ depends not only on $\xi$, but also on $n$. Let us show that for all $n$ large enough this dependence vanishes.

\begin{claim}
\label{Claim:AccessesConstant}
There exists $n'_a \ge n_a$ such that for each $n \ge n_a'$ the number of accesses to any given root in $\gamma_n(I_1)$ is constant.  
\end{claim}

\begin{proof}
Let $t \in I_1$ be a point such that $\gamma_{n+1}(t) = \xi$ is a root; using \eqref{Eq:FuncRel}, $\gamma_n(\tau_n(t)) = \xi$. But $\tau_n|_{I_1}$ is a homeomorphism onto $I_1$, thus $\tau_n(t) \in I_1$. Therefore, $\xi \in \gamma_n(I_1)$. Since both $t$ and $\tau_n(t)$ correspond to an access to $\xi$ in $V_{n+1}$, resp.\ $V_n$, it follows that $|\mathcal A_\xi(V_n)| \ge |\mathcal A_\xi(V_{n+1})|$. The sequence $(|\mathcal A_\xi(V_n)|)_{n \ge n_a}$ is thus non-increasing, and hence is constant starting some $n_a' \ge n_a$. Increase $n_a'$ if necessary so that the latter conclusion holds for all roots in $\gamma_n(I_1)$.
\end{proof}

By Claim~\ref{Claim:AccessesConstant}, $\tau_n(v_{n+1,i}) = v_{n,i}$ and $\tau_n(J_{n+1,i}) = J_{n,i}$ for each $i \in \{1, \ldots, s\}$ and every $n \ge n_a'$. Since $\gamma_k(J_{k,i}) = \partial B_{k,i} \cap U_\xi$, $k \in \{n, n+1\}$ and $U_\xi$ is invariant under $N_p$, we also conclude that $N_p(B_{n+1,i}) = B_{n,i}$ for all $i \in \{1, \ldots, s\}$ and $n \ge n_a'$. Here the data depends on $\xi$.

\begin{figure}[htbp]
\begin{center}
\includegraphics[width=0.35\textwidth, trim=10 0 10 18, clip]{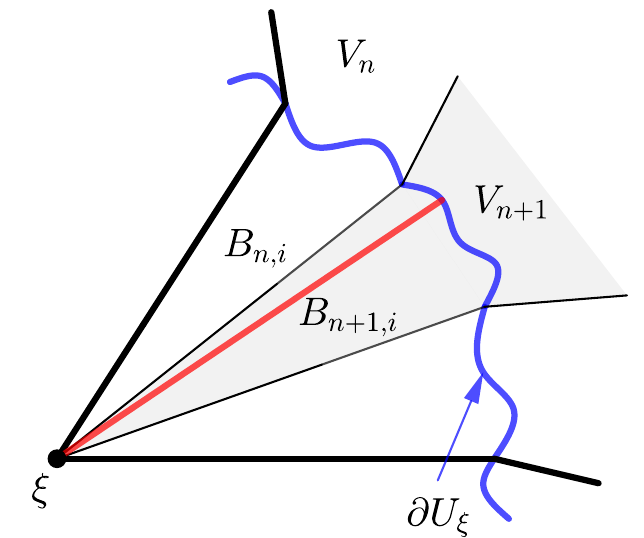}\hfill
\caption{This picture shows the situation locally around the root $\xi$ focusing on the accesses $B_{n+1,i}$ and $B_{n,i} = N_p(B_{n+1,i})$ in the immediate basin $U_\xi$. In the depicted situation, $B_{n+1,i} \subset B_{n,i}$, and hence $\ovl{B_{n+1,i}} \cap \ovl{B_{n,i}}$ contains a fixed ray of $N_p$ (shown in red).}
\label{Fig:MappingOfAccesses}
\end{center}
\end{figure}

\subsubsection{Inclusion of accesses to roots in the image of degree 1 interval.}  
\label{SSec:Inclusion}
Let $\xi \in \gamma_n(I_1)$ be a root, and $n$ be at least $n'_a$. Since $V_{n+1} \subset V_n$, it follows that for each $i$ there exists a $j$ such that $B_{n+1, j} \subset B_{n,i}$. Introduce \emph{the inclusion map} $T_n \colon \{1, \ldots, s\} \to \{1, \ldots, s\}$ defined as $T_n(i) = j$. Because the homeomorphism $\tau_n$ is orientation-preserving, the map $T_n$ is order-preserving.

The following claim summarizes some properties of such maps. The meaning of the second assertion of the claim will be clear a bit later: this will be the property that will allow us to construct the bridge in the case $\xi_1 = \xi_2$.

\begin{claim}
\label{Claim:T}
If $s \ge 2$ and $T \colon \{1, \ldots, s\} \to \{1, \ldots, s\}$ is an order-preserving map, then
\begin{enumerate}
\item
\label{It:FP}
$T$ has at least one fixed point;
\item
\label{It:WB}
if, additionally, $s > 2$ and $1, s$ are the only fixed points of $T$, then there exists a unique $j \in \{2, \ldots, s-1\}$ such that $T(j) \le j$ and $T(j+1) \ge j+1$ (with at least one inequality being strict).
\end{enumerate}
\end{claim}

\begin{proof}
Let us show that $T$ has a fixed point. Indeed, if $T(1) = 1$, then we are done. Otherwise, $T(1) > 1$, and if $\ell := \max\{ i \colon T(i) > i\}$, then $\ell+1$ is a fixed point.

Suppose $s > 2$ and $1,s$ are the only fixed points of $T$. For the remaining $s-2$ integers, we have either $T(i)>i$, or $T(i) < i$. If $T(2) > 2$, then $\ell = s-1$, because otherwise $\ell+1$ is a fixed point of $T$ distinct from $1$ and $s$; in this case, $j=1$. If $T(2) = 1$, then by a similar reasoning one can see that $j = \max\{ i \colon T(i) < i\}$.
\end{proof}

Coming back the the Newton setting, the existence of a fixed point of an inclusion map yields the following result.  

\begin{claim}
\label{Claim:FixedRay}
If $T_n(i) = i$, that is $B_{n+1, i} \subset B_{n,i}$, for some $i \in \{1, \ldots, s\}$, then $\ovl{B_{n+1,i}} \cap \ovl{B_{n,i}}$ contains a fixed ray of $N_p$ (see Figure~\ref{Fig:MappingOfAccesses}).  
\end{claim}

\begin{proof}
Consider a Riemann map $\phi \colon U_\xi \to \disk$ with $\phi(\xi) = 0$. It induces a map $h := \phi \circ N_p \circ \phi^{-1} \colon \disk \to \disk$ of the form $h(z) = \lambda z^k$ with $|\lambda|=1$ and $k \ge 2$. Any edge in $\partial V_n$ terminating at $\xi$ corresponds under $\phi$ to a radius in $\disk$. Therefore, $S:=\phi(B_{n,i})$ and $S':=\phi(B_{n+1,i})$ are two open sectors, each bounded within $\disk$ by two radii, satisfying $S' \subset S$ and $h(S') = S$. The action of $h$ on $\partial \disk$ is an expanding circle endomorphism of degree $k \ge 2$, and thus $\ovl{S'}$ contains a radius that is fixed under $h$. Therefore, $\ovl{B_{n+1,i}} \cap \ovl{B_{n,i}}$ contains a ray from $\xi$ that is fixed under $N_p$.
\end{proof}

Here is an immediate corollary to the previous two claims.

\begin{claim}
\label{Claim:NoOtherRoots}
For every $n \ge n_a'$, the image $\gamma_n(I_1)$ does not contain a root of $p$ other than $\xi_1$ and $\xi_2$. 
\end{claim}

In this claim we do not exclude a possibility that the curve $\gamma_n \colon \inter I_1 \to \partial V_n$ passes through $\xi_1$ or $\xi_2$ (compare Figure~\ref{Fig:Bridges}: $\inter I_1$ contains $3$ pink points that are mapped to $\xi_1$ and $2$ red points that are mapped to $\xi_2$).

\begin{proof}
If $\xi \in \gamma_n(I_1)$ is a root and $\xi \not \in \{\xi_1, \xi_2 \}$, then by Claim~\ref{Claim:T}~\eqref{It:FP} there exists an $i$ so that $T_n(i) = i$, where $T_n$ is the inclusion map of the accesses to $\xi$. Hence, by Claim~\ref{Claim:FixedRay} there exists a fixed ray in $\bigcap_{n \ge n_a'} \ovl{B_{n,i}}$. This fixed ray lies in the channel diagram $\Delta$ and connects $\xi$ to $\infty$, and hence it must be either in each $\partial V_n$, or must ``cut through'' and disconnect each $V_n$. Both options are impossible by construction.
\end{proof}

The following two claims establish some further properties of the inclusion map. 

\begin{claim}
\label{Claim:DoNotDepend}
The inclusion map $T_n$ defined for $\xi \in \{\xi_1, \xi_2\}$ does not depend on $n$ (but of course depends on the root). 
\end{claim}

\begin{proof}
We want to show that if $B_{n+1, j} \subset B_{n,i}$, i.e.\ $T_n(j) = i$, then $B_{n+2, j} \subset B_{n+1,i}$, that is $T_{n+1}(j) = i$. The claim will then follow by induction.

Let $f$ be the branch of $N_p^{-1}$ that maps $B_{n,i}$ over $B_{n+1,i}$. This inverse branch maps $B_{n+1, j} \subset B_{n,i}$ over one of the accesses to $\xi$ in $V_{n+2}$. Because $B_{n+2,j}$ is the unique access such that $N_p(B_{n+2,j}) = B_{n+1,j}$, it follows $f(B_{n+1,j}) = B_{n+2,j}$. Applying $f$ to the nested pair $B_{n+1, j} \subset B_{n,i}$ we conclude $B_{n+2,j} \subset B_{n+1,i}$.
\end{proof}

By Claim~\ref{Claim:DoNotDepend}, we can now speak about \emph{the} inclusion map for each of the roots $\xi_1$ and $\xi_2$. Another consequence of Claim~\ref{Claim:FixedRay} is that, when $\xi_1 \neq \xi_2$, each of these inclusion maps has a unique fixed point ($1$ for $\xi_1$, and $s$ for $\xi_2$); they correspond to the access adjacent to the fixed ray $\Gamma_1$ resp.\ $\Gamma_2$. For the rest of the points these maps are strictly monotone (decreasing for $\xi_1$ and increasing for $\xi_2$). Similarly, when $\xi_1 = \xi_2 =: \xi$, the inclusion map $i \mapsto T(i)$ defined for $\xi$ has exactly two fixed points, namely, $1$ and $s$, and if $s>2$, then it is strictly decreasing for $i \le j$ and strictly increasing for $i \ge j+1$, where $j$ is given by Claim~\ref{Claim:T}~\eqref{It:WB}. These observations justify the following claim. 

\begin{claim}
\label{Claim:Monotonicity}
If $\xi \in \{\xi_1, \xi_2\}$ is a root, $s$ is the number of accesses to $\xi$ in $V_n$ ($n \ge n_a'$), and $T \colon \{1, \ldots, s\} \to \{1, \ldots, s\}$ is the inclusion map for $\xi$, then for every $i \in \{1, \ldots, s\}$
\begin{enumerate}
\item
if $\xi_1 \neq \xi_2$,
\begin{equation*}
T^{\circ s}(i) = \left\{
\begin{aligned}
&1, \quad \text{ if } \xi = \xi_1,\\
&s, \quad \text{ if } \xi = \xi_2;
\end{aligned}
\right.
\end{equation*}
\item
if $\xi_1 = \xi_2$,
\begin{equation*}
T^{\circ s}(i) = \left\{
\begin{aligned}
&1, \quad \text{ if } i \le j,\\
&s, \quad \text{ if } i \ge j+1,
\end{aligned}
\right.
\end{equation*}
where $j$ is given by Claim~\ref{Claim:T}~\eqref{It:WB}. \qed
\end{enumerate}
\end{claim}

\subsubsection{Defining the bridge in the case $\xi_1 = \xi_2$.} 
\label{SSec:Br2}
Denote $\xi_1 = \xi_2 =: \xi$, $s := |\mathcal A_{\xi}(V_n)|$, and let $T$ be the inclusion map for $\xi$; as usual, we assume $n \ge n_a'$.

\begin{claim}
\label{Claim:s2}
The two edges in $\gamma_{n}(I_1)$ closest to $\Gamma_1$ and $\Gamma_2$ are different for all $n > n_a'$.
\end{claim}

\begin{proof}
If this is not the case, then $s=2$ and $\partial V_n \cap U_\xi = \Gamma_1 \cup \Gamma_2 \cup e_n$ for all $n \ge n_a'$, where $e_n$ is some pre-fixed internal ray. In particular, $\Delta_n \cap \ovl V_n \cap U_\xi = \Gamma_1 \cup \Gamma_2 \cup e_n$ for all $n \ge n_a'$. But $\xi$ is a critical point, and hence the number of edges in $\Delta_n$ within each of the components of $U_\xi \sm (\Gamma_1 \cup \Gamma_2)$ is growing with $n$ (one of these components is equal to $V_n \cap U_\xi$). This contradicts our assertion that $\partial V_n$ intersects $U_\xi$ at exactly three edges. 
\end{proof}

We are now in position to define the bridges when the roots coincide. 

If $s=2$ for all $n \ge n_a'$, then $\gamma_{n+1}(\inter I_1)$ contains a Jordan loop connecting $\xi$ to itself. The existence of such a loop follows from Claim~\ref{Claim:s2}: it starts and ends with two different edges in $\gamma_{n+1}(I_1) \cap U_{\xi}$.  This is the required bridge $X_{n+1}$, and it is uniquely defined.

If $s > 2$, then by Claim~\ref{Claim:T}~\eqref{It:WB} there exists a unique $j \in \{2, \ldots, s-1\}$ such that $T(j) \le j$ and $T(j+1) \ge j+1$, with at least one inequality being strict. This inclusion behavior is only possible when the image under $\gamma_{n+1}$ of $[v_{n+1, j}, v_{n+1,j+1}]$ surrounds at least one access to $\xi$ within $V_n$ (see Figure~\ref{Fig:T}). Therefore, $\gamma_{n+1}((v_{n+1, j}, v_{n+1,j+1}))$ contains a Jordan loop connecting $\xi$ to itself. This loop is the bridge $X_{n+1}$; again, this bridge is uniquely defined.

\begin{figure}[htbp]
\begin{center}
\includegraphics[width=0.91\textwidth, trim=10 9 5 10, clip]{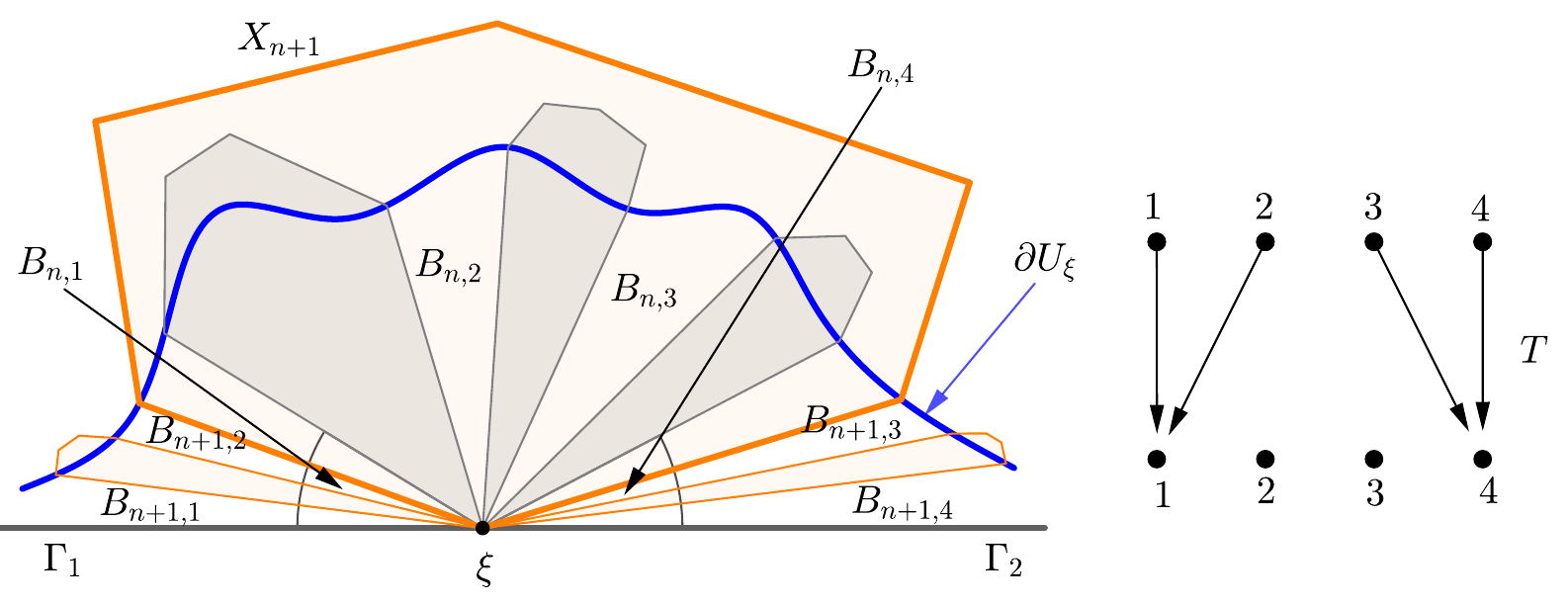}\hfill
\caption{Inclusion of the accesses governed by the inclusion map $T$. The case $s=4$ and $j=2$ is shown. The bridge $X_{n+1}$ is marked with thick orange line. The complement of $V_n$ is shaded in grey, the complement of $V_{n+1}$ is shaded in light orange.}
\label{Fig:T}
\end{center}
\end{figure} 
 
\subsubsection{Properties of bridges.}  
\label{SSec:Props}
In Subsections~\ref{SSec:Br1} and \ref{SSec:Br2}, the bridge $X_n$ connecting $\xi_1$ to $\xi_2$ is defined for all $n > n_a'$. We now derive some properties of bridges; these properties will yield Lemma~\ref{Lem:SweepOut} in the next (final) step of the proof.

\begin{claim}
\label{Claim:BridgesMapping}
There exists $k_a > n_a'$ such that $N_p (X_{n+1}) = X_n$ and $N_p \colon X_{n+1} \to X_n$ is a homeomorphism for every $n \ge k_a$. 
\end{claim}

\begin{proof}
By construction of bridges, $X_n \subset N_p(X_{n+1})$ for every $n > n_a'$. Indeed, if $\xi_1 \neq \xi_2$, then $N_p (X_{n+1})$ is a subset of $\gamma_n(I_1)$ that connects $\xi_1$ and $\xi_2$ within $\partial V_n \cap \C$. Hence, $X_n \subset N_p(X_{n+1})$ by definition of the bridge in the case $\xi_1 \neq \xi_2$. If $\xi_1 = \xi_2$, then the claim follows from Claim~\ref{Claim:DoNotDepend} and the fact that the bridge for $\xi_1 = \xi_2$ was constructed uniquely only using the properties of the inclusion map.

Call the \emph{length} of the bridge the number of edges of the Newton graph this bridge contains. Since $X_n \subset N_p(X_{n+1})$, the lengths of the bridges are non-decreasing as $n \to \infty$. But for every $n > n_a'$ the length of $X_n$ is bounded above by the number of intervals in $I_1$ that are mapped by $\gamma_n$ to edges of $\partial V_n$ (restricted to either of these intervals $\gamma_n$ is a homeomorphism); this number is independent of $n$ because $\tau_n$ is a homeomorphism of $I_1$. Therefore, there exists $k_a > n_a'$ such that for all $n \ge k_a$ the length of $X_n$ is constant; in particular, $N_p(X_n)$ is a Jordan curve. For such $n$ it then follows that $N_p(X_{n+1}) = X_n$ and $N_p \colon X_{n+1} \to X_n$ is a homeomorphism.  
\end{proof}

Define $L_n := X_n \sm (U_{\xi_1} \cup U_{\xi_2})$, $e_n^1 := X_n \cap U_{\xi_1}$, and $e_n^2 := X_n \cap U_{\xi_2}$. The first set is the central part of the bridge, that is the bridge without its terminal edges $e_n^1$ and $e_n^2$. Note that the central part can be just a point. Since the immediate basins are invariant under $N_p$, Claim~\ref{Claim:BridgesMapping} implies that the central parts of bridges are mapped by $N_p$ homeomorphically over central parts, and terminal edges are mapped over terminal edges.

\begin{claim}
\label{Claim:Extension}
The homeomorphism $N_p \colon L_{n+1} \to L_n$ admits a biholomorphic extension to some neighborhood of $L_{n+1}$ for every $n \ge k_a$ (possibly subject to increasing $k_a$).
\end{claim}

Note that such an extension is clearly impossible for the whole bridge because of the critical points at its ends.  

\begin{proof}
By Claim~\ref{Claim:BridgesMapping}, the map $N_p \colon X_{n+1} \to X_n$ sends edges to edges and vertices to vertices in a one-to-one fashion and the lengths of the bridges are constant for every $n \ge k_a$. Using Claim~\ref{Claim:NoOtherRoots} we then conclude that every particular bridge can intersect only finitely many other bridges. Therefore, by possibly increasing $k_a$ to get rid of the critical points that are not fixed under $N_p$ but still lie in $L_{n+1}$ for some initial values of $n$, we can guarantee that $L_{n+1}$ contains no critical points of $N_p$ for every $n \ge k_a$. The claim follows.   
\end{proof}

\begin{claim}
\label{Claim:Angles}
For every $i \in \{1, 2\}$, the angle between $\Gamma_i$ and $e_n^i$ within $V_n$ tends to zero as $n \to \infty$.
\end{claim}

\begin{proof}
Let us show this for $i = 1$. For each $n \ge k_a$, consider a pair of accesses to $\xi$ within $V_n$: the one that is adjacent to $\Gamma_1$, call it $B_n$, and the one that is adjacent to $e_n^1$, call it $B_n'$. Both of these accesses are components of $U_{\xi_1} \cap V_n$, and it is possible that $B_n = B_n'$. 

On the immediate basin $U_{\xi_1}$ the Newton map $N_p$ is conjugate to the power map $z \mapsto \lambda z^k$, $k \ge 2$, $|\lambda|=1$ via a Riemann map $\phi \colon U_{\xi_1} \to \disk$, $\phi(\xi) = 0$. Assume that the Riemann map is rotated so that the ray $\Gamma_1$ corresponds to the ray $[0,1)$ in the disk. Denote by $\alpha_n$ and $\alpha_n'$ the angles of the sectors $\phi(B_n)$ and $\phi(B_n')$ in the disk (compare the proof of Claim~\ref{Claim:FixedRay}). 

Our goal is to show that $\alpha'_n \to 0$ when $n \to \infty$. Since $N_p(B_{n+1}) = B_n$ and $\ovl{B_{n+1}} \cap \ovl{B_n} = \Gamma_1$, we have $\alpha_{n+1} = \alpha_n / k$, and hence $\alpha_n \to 0$ as $n \to \infty$. By Claim~\ref{Claim:Monotonicity}, $B_n' \subset B_{n - s}$ for every $n \ge k_a + s$, where $s$ is the number of accesses to $\xi_1$. Therefore, $\alpha'_n \le \alpha_{n-s}$, and hence $\alpha_n' \to 0$ as $n \to \infty$ as well. 
\end{proof}

\begin{claim}
\label{Claim:SA}
As  $n\to\infty$, the bridges $X_n$ tend to $\ovl \Gamma_1 \cup \ovl \Gamma_2$ as a set.
\end{claim} 
\begin{proof}
This was shown in \cite[Theorem~3.4]{DMRS}, Claim 3. The idea is that the terminal edges $e_n^1$ and $e_n^2$ of $X_n$ run within immediate basins and connect the roots to pre-poles, say $r_n^1$ and $r_n^2$. Claim~\ref{Claim:Angles} implies that these points converge to $\infty$ as $n \to \infty$. It also follows that the terminal edges $e_n^i$ converge to $\Gamma_i$ as sets. The rest of the bridges $X_n$, i.e.\ the central parts $L_n$, connect two points close to $\infty$ and have short spherical lengths (the central parts $L_n$ have constant hyperbolic lengths in domains that are close to $\infty$), and this implies the claim (it is here that we use Claim~\ref{Claim:Extension}). Details can be found in \cite{DMRS}.
\end{proof}

\begin{proof}[Proof of Lemma~\ref{Lem:SweepOut}]
Conclusion~\eqref{It:ML1} of the lemma follows from Subsections~\ref{SSec:Br1} and \ref{SSec:Br2}. Conclusion~\eqref{It:ML2} follows from Claim~\ref{Claim:BridgesMapping}. Finally, conclusion~\eqref{It:ML4} follows from Claim~\ref{Claim:SA}. 
\end{proof}

\subsection{Puzzle construction for Newton maps of polynomials}
\label{SSec:Puz}

In this subsection we will present the construction of puzzles for an iterate of an (attracting-critically-finite) Newton map. The first step in this construction is the Newton graph $\Delta_n$ itself: it is a forward invariant graph, and thus might serve to define a Markov partition of $\Cc$. However, the components of $\Cc \sm \Delta_n$ (called puzzle pieces) are not necessarily Jordan disks, which is a problem. It is desirable for puzzle pieces to be Jordan disks in order to apply the standard theory of polynomial-like maps. Proposition~\ref{Prop:Circles} will help us resolve this problem. We will refine a Newton graph by adding to it all separating circles (the existence of which is proven in the proposition). The details of the construction are as follows (compare Figure~\ref{Fig:Circles}).

\begin{figure}[htbp]
\begin{center}
\includegraphics[scale=1.4, trim=20 20 20 20]{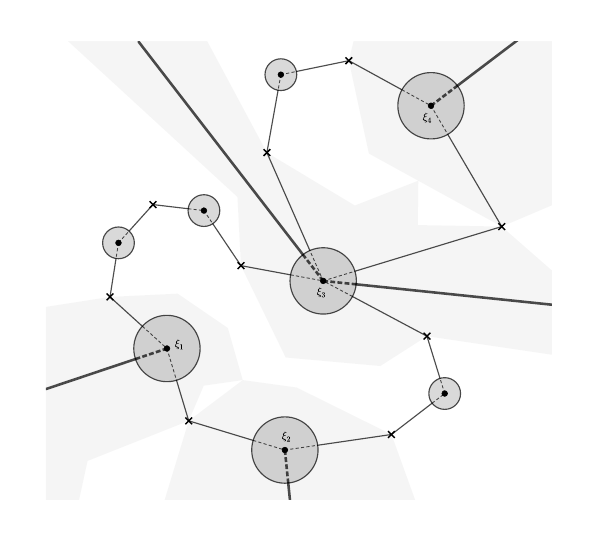}
\caption{A schematic drawing of the dynamical plane of a degree 4 Newton map. A part of the Newton graph is shown. Vertices that belong to the Fatou set are marked with circles; vertices that belong to the Julia are marked with crosses. Thick lines describe the channel diagram $\Delta$. The lightly shaded regions represent the immediate basins of roots. The thin segments, constructed as iterated preimages of $\Delta$, form two topological circles passing through the roots $\xi_1, \ldots, \xi_4$ and are guaranteed to exist by Proposition~\ref{Prop:Circles}. All black lines except circles form $\Deltap_0$. The big grey disks are bounded by suitably chosen equipotentials in the immediate basins; the small grey disks are iterated preimages of the big ones intersecting the graph $\Deltap_0$. All solid black lines comprise $\Deltat_0$.}
\label{Fig:Circles}
\end{center}
\end{figure} 

Let $\Deltap$ be the union of $\Delta$ and all circles in all the components of $\Cc \sm \Delta$  given by Proposition~\ref{Prop:Circles} for the least possible value of $K$, and let $\Deltap_n$ to be the component of $N_p^{-n}(\Deltap)$ containing $\infty$ (we assume $\Deltap_0:= \Deltap$).

A point $w$ is a \textit{(pre-)pole at level $l\ge 1$} if $w \in N_p^{-l}(\infty)$ with minimal $l$. Let $X$ be the union of all the circles given by Proposition~\ref{Prop:Circles} for the least possible value of $K$, and let $\nu$ be the largest level of (pre-)poles in $X$. Recall that $N$ is the minimal level of the Newton graph such that $\Delta_n$ contains all the poles of $N_p$ for all $n \geqslant N$ (see Theorem~\ref{Thm:PolesInGraph}).  The following lemma summarizes the essential properties of $\Deltap_n$ in terms of $N$ and $\nu$.

\begin{lemma}[Properties of $\Deltap_n$]
\label{Lem:PropertiesDeltap}
In the notations above, 
\begin{enumerate}
\item
\label{It:PDP1}
$\Delta_n \subset \Deltap_n$ for all $n$;

\item
\label{It:PDP2}
$\Deltap_{(i-1)( N - 1 + \nu)} \subset \Deltap_{i(N - 1 + \nu)}$ for every positive integer $i$;

\item
\label{It:PDP3}
$N_p^{-l}(\Deltap_n) = \Deltap_{n+l}$ for all positive integers $n$ and $l$ such that $n \geqslant N - 1 + l$;

\item
\label{It:PDP4}
for every $n \geqslant 0$ each component of $\Cc \sm \Deltap_n$ is a disk that might be pinched only at (finitely many) iterated preimages of roots.
\end{enumerate}
\end{lemma}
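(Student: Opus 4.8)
The plan is to verify the four items more or less in the order they are stated, since each one feeds into the next. Item \eqref{It:PDP1} is essentially immediate: $\Delta \subset \Deltap$ by construction, and $\Delta$ is connected and contains $\infty$, so $\Delta \subset \Deltap_0$; then pulling back by $N_p^{-n}$ and taking the component through $\infty$, the forward invariance of the Newton graph ($N_p(\Delta_{n}) = \Delta_{n-1}$) together with the fact that $\Delta_n$ is connected and meets $\infty$ forces $\Delta_n$ to lie inside the component $\Deltap_n$ of $N_p^{-n}(\Deltap)$ containing $\infty$. One should be slightly careful that $\Deltap$ need not be connected as a graph in the naive sense (the circles $X_V$ touch $\Delta$ only at finite fixed points), but it is connected as a subset of $\Cc$, and that is all one needs for the component-of-preimage definition to behave well; I would make this remark explicit at the outset so it is available for all four items.

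For item \eqref{It:PDP3} I would argue by the usual preimage-of-component bookkeeping. The only obstruction to the naive identity $N_p^{-l}(\Deltap_n) = \Deltap_{n+l}$ is that $N_p^{-l}$ of a \emph{connected} set can be disconnected, so $\Deltap_{n+l}$ (the component through $\infty$) might be a proper subset of $N_p^{-l}(\Deltap_n)$. This is where the hypothesis $n \ge N-1+l$ enters: by Theorem~\ref{Thm:PolesInGraph} and Corollary~\ref{Cor:PrepolesInNewtonGraph}, once the level is at least $N-1+l$ every point of $N_p^{-l}(\infty)$ (in fact every (pre-)pole of level $\le l$) is already a vertex of $\Delta_{n}$, hence of $\Deltap_{n}$; I would then use this to show that any component of $N_p^{-l}(\Deltap_n)$ other than $\Deltap_{n+l}$ would have to be ``cut off'' from $\infty$ by $\Deltap_n$ itself, contradicting the fact that $\Deltap_n \subset N_p^{-l}(\Deltap_n)$ is connected through $\infty$. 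In short: controlling the prepoles up to level $l$ guarantees that the full preimage graph stays connected, so the component through $\infty$ is everything.

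Item \eqref{It:PDP2} should then follow by iterating \eqref{It:PDP3}: set $l = N-1+\nu$ and $n = (i-1)(N-1+\nu)$; for $i \ge 1$ we have $n = (i-1)l \ge 0$, and the inequality $n \ge N-1+l$ needed to apply \eqref{It:PDP3} holds as soon as $(i-1)l \ge N-1+l$, i.e. for $i$ large. For the remaining small values of $i$ one falls back on a direct inclusion: $\Deltap \subset N_p^{-(N-1+\nu)}(\Deltap)$-type containment, using that $\nu$ was chosen as the top level of (pre-)poles appearing in the separating circles $X$, so all of $X$ (and all of $\Delta$) is captured after $N-1+\nu$ pullbacks. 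I would double-check the indexing here carefully — the shift by $N-1$ rather than $N$, and the role of $\nu$ — since that is the most error-prone part of the bookkeeping.

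The main obstacle, and the heart of the lemma, is item \eqref{It:PDP4}: each component of $\Cc \sm \Deltap_n$ is a Jordan disk, pinched at most at finitely many pre-fixed points. Here is where Proposition~\ref{Prop:Circles} does the real work. For $n=0$: the components of $\Cc \sm \Delta$ are topological disks (as $\Delta$ is connected), but possibly pinched; inside each such component $V$ we inserted the circle $X_V$ through the finite fixed points on $\partial V$, and $X_V$ separates $\infty$ from the critical values in $V$. I would argue that cutting $V$ along $X_V$ produces an ``outer'' piece adjacent to $\infty$ whose closure is a genuine Jordan disk (any pinch point of $\partial V$ lies on $\Delta$, hence on $\partial X_V$-side in a controlled way) and ``inner'' pieces; the pinching that survives is only at the finite fixed points, which are pre-fixed (indeed fixed). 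For general $n$ one pulls this back: the pre-images under $g = N_p^{\circ M}$ of Jordan disks along which the branching occurs only over critical values — and the circles $X_V$ were placed precisely to separate those critical values from the rest — so each pullback component is again a disk, with new pinch points occurring only at preimages of the old pinch points, i.e. at pre-fixed points, and only finitely many of them at each level. The delicate point to get right is that no \emph{new} pinching is created along the pulled-back circles: this is exactly the content of the circle-separation property, and I expect the write-up to spend most of its length making this pullback step rigorous (checking that critical points in each component are surrounded by, not on, the relevant circles, using Lemma~\ref{Lem_OnlyCritical} and Proposition~\ref{Prop:AccessesHSS} to locate them).
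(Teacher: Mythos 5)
Your items (1)–(3) follow essentially the paper's route, but in (3) the step you lean on, namely the containment $\Deltap_n \subset N_p^{-l}(\Deltap_n)$ (equivalently $N_p^{\circ l}(\Deltap_n)\subset \Deltap_n$), is not available: $\Deltap$ is not forward invariant, since the separating circles map to arcs at a lower level which in general lie neither on $\Delta$ nor on the chosen circles, so the nesting $\Deltap_{n-l}\subset\Deltap_n$ fails at levels that are not multiples of $N-1+\nu$. The connectedness of $N_p^{-l}(\Deltap_n)$ should instead be routed through the Newton graph itself: every component of the preimage contains a point of $N_p^{-l}(\infty)$; all such points lie on $\Delta_n$ because $n\ge N-1+l$ (Corollary~\ref{Cor:PrepolesInNewtonGraph}); and $\Delta_n\subset N_p^{-l}(\Deltap_n)$ because $N_p^{\circ l}(\Delta_n)=\Delta_{n-l}\subset\Delta_n\subset\Deltap_n$. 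This is, up to wording, the paper's argument, and with it your derivation of (2) from $\Deltap\subset\Delta_{N-1+\nu}$ (the fact you correctly identify via the choice of $\nu$) goes through.

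The genuine gap is in (4). You reduce the inductive step to the principle that pulling back creates pinch points only above old pinch points; that principle is false here and, even if amended to allow critical vertices, it never addresses the real danger, which is multiple accessibility at (pre-)poles. Note that at level $0$ there are no pinch points at all (every component of $\Cc\sm\Deltap_0$ is a Jordan disk), so your principle would wrongly yield that no pinching ever appears, whereas pinching genuinely does appear at pre-fixed points at higher levels (this is exactly why the truncation by equipotentials is introduced afterwards); conversely, two accesses of one complementary component at a prepole could a priori arise from sectors that are identified only globally, or at a critical prepole, without the image vertex being pinched. The paper's proof of (4) is a direct analysis of the vertices of $\Deltap_n$: each (pre-)pole is either a preimage of a vertex of a bridge from Proposition~\ref{Prop:Circles} (such points are uniquely accessible from any adjacent complementary component) or a preimage of the vertex $\infty$ of $\Delta$ (any two accesses to such a point from one component are separated by preimages of the bridges of $\Deltap_0$); hence no component can be pinched at a (pre-)pole, and only pre-fixed points remain. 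Your sketch uses Proposition~\ref{Prop:Circles} only as a separation of critical values, which is not where it enters this item, and it leaves the prepole case --- the crux of the statement --- unproved.
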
 

The last condition says that each component is a disk, and that it may fail to be a Jordan disk only because the boundary runs finitely many times through the same root or one of its preimages (so that the component intersects a neighborhood of the root or its preimage in finitely many sectors).

\begin{proof}
(\ref{It:PDP1}): Since $\Delta \subset \Deltap$, the first property immediately follows from the definition of $\Deltap_n$.

(\ref{It:PDP2}): Observe that since $\Delta_N$ contains all the poles of $N_p$, inductively $\Delta_{N-1+\nu}$ contains all the (pre-)poles of $N_p$ at level at most $\nu$ (see Corollary~\ref{Cor:PrepolesInNewtonGraph}), thus $\Delta_{N-1 + \nu} \supset \Deltap$. Therefore, by definition of $\nu$ and using (\ref{It:PDP1}), we obtain (\ref{It:PDP2}) for $i = 1$. The rest follows by induction.

(\ref{It:PDP3}): If $n \geqslant N - 1 + l$, then $\Delta_n$ (and hence $\Deltap_n$, by (\ref{It:PDP1})) contains all the (pre-)poles at level at most $l$. However, every component of $N_p^{-l}(\Deltap_n)$ must contain a prepole of level $l$ (since $\Deltap_n \supset \Delta$; see Corollary~\ref{Cor:PrepolesInNewtonGraph}), and therefore, every such component is connected to $\Deltap_n$. From definition of $\Deltap_n$ it then follows that $N_p^{-l} (\Deltap_n) = \Deltap_{n+l}$. 

(\ref{It:PDP4}): To begin with, by construction, every component of $\Cc \sm \Deltap_0$ is an open topological disk. Assume $n > 0$. Every (pre-)pole on the graph $\Deltap_n$ is either an iterated preimage of a vertex on the circle given by Proposition~\ref{Prop:Circles}, or an iterated preimage of the vertex $\infty$ on the channel diagram $\Delta$. Since, by Proposition~\ref{Prop:Circles}, the circle in $\Cc \sm \Delta$ is disjoint from the postcritical set of $N_p$, it follows that the points of the first type are always uniquely accessible from every domain with such points on the boundary. Every access to each (pre-)pole of the second type must be separated from any another access to the same (pre-)pole by a pair of preimages of the corresponding bridges in $\Deltap_0$ (these bridges are ``circular arcs'' connecting the corresponding roots in $\Delta$ within the circle given by Proposition~\ref{Prop:Circles}). Therefore, each (pre-)pole of the second type cannot be multiply accessible. The claim follows. 
\end{proof}

Lemma~\ref{Lem:PropertiesDeltap} gives us almost all we need to construct renormalization domains for (an iterate of) the Newton map $N_p$. However, by property (\ref{It:PDP4}) in Lemma~\ref{Lem:PropertiesDeltap}, a component of $\Cc \sm \Deltap_n$ can be pinched at finitely many iterated preimages of the roots. We want to avoid such pinchings by considering a \textit{truncation} of $\Deltap_n$. This a standard procedure that is carried out as follows. 

For every root $\xi_i$ of $N_p$ take an arbitrary but fixed equipotential $E(\xi_i)$ in the immediate basin $U_i$. By definition, $E(\xi_i)$ is the preimage of the circle $\{z \colon |z| = r\}$ (for some $r\in(0,1)$) under any Riemann map $\phi \colon U_i \to \disk$ with $\phi(\xi_i) = 0$.

We extend this equipotential to preimages of roots as follows. Consider a point $z_{i,k} \in \Deltap_0$ with $N_p^{\circ k}(z_{i,k}) = \xi_i$ and minimal $k$. Let $E(z_{i,k})$ be the component of $N_p^{-k}(E(\xi_i))$ in the Fatou component containing $z_{i,k}$; by construction, $E(z_{i,k})$ is a smooth closed curve separating $z_{i,k}$ from the boundary of its Fatou component and hence from $\infty$. Let $E_0$ be the union of all $E(z_{i,k})$, where $z_{i,k}$ runs over all  preimages of roots in  $\Deltap_0$ (this includes all the finite fixed points of $N_p$), and let $Q_0$ be the unique unbounded component of $\Cc \sm E_0$ (see Figure~\ref{Fig:Circles}). Set
$$
\Deltat_0 := \left(\Deltap_0 \cap Q_0 \right) \cup E_0.
$$

\begin{definition}[Truncated puzzle of depth $n$]
\label{Def:TruncPuzzle}
The \textit{truncated puzzle of depth $n$} (with $n \geqslant 0$) is the component $\Deltat_n$ of $N_p^{-n}(\Deltat_0)$ that contains $\infty$.
\end{definition}

Using truncated puzzles we can finally define Newton puzzles for iterates of $N_p$.

\begin{definition}[Puzzles and puzzle pieces for iterate of Newton map]
\label{Def:Puzzles}
Set $M := N - 1 + \nu$. For $n \geqslant 0$, the \emph{puzzle of depth $n$} is the finite graph $\Deltat_{(n+2)M}$. A \emph{puzzle piece of depth $n$}, denoted as $P_n$, is the closure of a component of $\Cc \sm \Deltat_{(n+2) M}$ that intersects the Julia set of $N_p$. 

For every point $x$ which is not attracted to one of the roots of $p$ and for a given depth $n$, define $P_n(x)$ to be the union of all puzzle pieces of depth $n$ containing $x$. 
\end{definition}

From the definition above it is clear that if $x$ is not $\infty$ or a (pre-)pole, then $P_n(x)$ is a unique puzzle piece of depth $n$ containing the point $x$. Otherwise, $P_n(x)$ is a finite union of puzzle pieces with $x$ as their common boundary point.

\begin{definition}[Fiber]
Let $x \in \Cc$ be a point that is not attracted to any of the roots of $p$; the set 
$$
\fib(x) := \bigcap_{n \geqslant 0} P_n(x)
$$
is called the \emph{fiber} of $x$ (with respect to the partition given by the set of puzzles $(\Deltat_{(n+2)M})_{n=0}^\infty$).
\end{definition}

The following theorem summarizes the constructions we have done so far, and establishes properties of puzzle pieces and fibers for general (attracting-critically-finite) Newton maps. This is a more precise version of Theorem~\ref{Thm:B}, restricted to the attracting-critically-finite case. 
We view this theorem as one of the main results of the paper because it provides the foundation for all results on rigidity and local connectivity of Newton maps. 

\begin{theorem}[Newton puzzles for Newton maps]
\label{Thm:PuzPr}
Every attracting-critically-finite Newton map $N_p$ has an iterate $g := N_p^{\circ M}$ for which there exists a puzzle partition of any given depth $n$ with the following properties:
\begin{enumerate}
\item
\label{It:PuzPr1}
every puzzle piece is a closed Jordan disk;

\item
\label{It:PuzPr2}
any two puzzle pieces are either nested or have disjoint interiors; in the former case, the puzzle piece of larger depth is contained in the puzzle piece of smaller depth;  

\item
\label{It:PuzPr3}
if $x$ is a point that is not attracted to a root of $p$, then for all $n, k \geqslant 0$ the map
$$
g^{\circ k} \colon \inter P_{n+k}(x) \to \inter P_n (g^{\circ k}(x))
$$
is a branched covering;

\item
\label{It:PuzPr4}
if $x$ is $\infty$, a pole or a prepole, then $\fib(x) = \{x\}$.

\item
\label{It:PuzPr5}
if $x$ is not eventually mapped to $\infty$ or attracted to one of the roots of $p$, then the fiber $\fib(x)$ is a closed connected set such that $\fib(x) \subset \inter P_n(x)$ for every $n \geqslant 0$.
\end{enumerate}
\end{theorem}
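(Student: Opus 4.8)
The plan is to bootstrap everything from the two structural inputs already in hand: Lemma~\ref{Lem:PropertiesDeltap} (which controls the graphs $\Deltap_n$) and the truncation procedure that produces $\Deltat_n$ from $\Deltap_n$. The iterate is $g = N_p^{\circ M}$ with $M = N-1+\nu$ as in Definition~\ref{Def:Puzzles}, and the puzzle of depth $n$ is $\Deltat_{(n+2)M}$. First I would verify property~(\ref{It:PuzPr1}). By Lemma~\ref{Lem:PropertiesDeltap}(\ref{It:PDP4}) each component of $\Cc\sm\Deltap_m$ is a disk pinched only at finitely many pre-fixed points; the truncation replaces a neighbourhood of each such pre-fixed point inside the relevant (pre-)Fatou component by the bounded side of an equipotential $E(z_{i,k})$, which by construction is a Jordan arc that ``cuts off'' exactly the pinch. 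One must check that after passing to $\Deltat_{(n+2)M}$ (a component of $N_p^{-(n+2)M}(\Deltat_0)$) the pinch points that could still appear are all pre-fixed and all lie in (pre-)basin Fatou components, so that each has been truncated; this uses that $(n+2)M \ge N-1+\ell$ for the relevant pullback level $\ell$, so Lemma~\ref{Lem:PropertiesDeltap}(\ref{It:PDP3}) applies and the pullbacks of $\Deltat_0$ are connected to the graph rather than forming new floating components. The conclusion is that every complementary component, hence every puzzle piece, is a closed Jordan disk.

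Next, properties~(\ref{It:PuzPr2}) and~(\ref{It:PuzPr3}) are the ``Markov'' content and should be derived together. For~(\ref{It:PuzPr3}): the key identity is $N_p^{-M}(\Deltat_{(n+1)M + M}) = \Deltat_{(n+2)M + M}$, i.e. pulling back the depth-$(n+1)$ puzzle graph by $g$ gives a graph containing the depth-$n$ puzzle graph; this follows from Lemma~\ref{Lem:PropertiesDeltap}(\ref{It:PDP3}) applied with $l = M$ (valid since the indices involved are all $\ge N-1+M$) together with the truncation being natural under $N_p^{-1}$ — the equipotentials $E(\xi_i)$ were chosen at a fixed level $1/2$, and the pullback of an equipotential at level $1/2$ in a basin component is again (a component of) the truncating equipotential family one level down, because of the B\"ottcher/Riemann-coordinate description. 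Granting this nesting of graphs, $g$ maps each depth-$(n+1)$ piece into a depth-$n$ piece as a proper map (a branched covering), since $g$ is proper and the preimage of the graph is a larger graph; iterating gives the statement for $g^{\circ k}$. Property~(\ref{It:PuzPr2}) then follows formally: two puzzle pieces of depths $m\le n$ either have disjoint interiors or the depth-$n$ one is contained in the depth-$m$ one, because the depth-$n$ graph contains the depth-$m$ graph (same index monotonicity), so a depth-$n$ piece lies in a single complementary component of the depth-$m$ graph.

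Finally, (\ref{It:PuzPr4}) and~(\ref{It:PuzPr5}) are the fiber statements. For~(\ref{It:PuzPr4}): if $x$ is $\infty$, a pole, or a prepole, then $x$ is a vertex of $\Deltat_{(n+2)M}$ for all large $n$ (Corollary~\ref{Cor:PrepolesInNewtonGraph} plus the inclusion $\Delta_n\subset\Deltap_n$), and the finitely many puzzle pieces meeting at $x$ shrink to $\{x\}$: one pulls back the fact that for the first level at which $x$ is a vertex, a small disk around $x$ is cut by the graph into the puzzle-piece sectors, and passing to deeper levels subdivides these sectors with their diameters tending to zero — this last point is the place where the B\"ottcher/equipotential structure near $x$ (via the pole it eventually maps to) is used, since near $\infty$ the graph $\Delta_n$ refines by pulling back rays under a map conjugate to $z\mapsto z^{d/(d-1)}$, forcing shrinking. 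For~(\ref{It:PuzPr5}): when $x$ is not eventually $\infty$ and not attracted to a root, $x$ lies in the interior of a unique piece $P_n(x)$ at every depth (by~(\ref{It:PuzPr4}), $x$ is never a vertex after cutting), the $P_n(x)$ are nested closed disks by~(\ref{It:PuzPr2}), so $\fib(x)=\bigcap_n P_n(x)$ is closed, connected (a nested intersection of continua), and contained in each $\inter P_n(x)$ — the latter because $P_{n+1}(x)$, being a strictly deeper piece, is compactly contained in $\inter P_n(x)$ as its boundary is disjoint from that of $P_n(x)$ except possibly at vertices, which $x$ avoids.

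\textbf{Main obstacle.} I expect the real work to be in making~(\ref{It:PuzPr1}) and the compatibility of truncation with pullback (used in~(\ref{It:PuzPr2})--(\ref{It:PuzPr3})) fully rigorous: one has to check carefully that \emph{all} potential pinch points of $\Cc\sm\Deltat_{(n+2)M}$ — not just those of the untruncated $\Deltap$-graph — are pre-fixed points sitting inside basin-of-root Fatou components and have genuinely been removed by the chosen equipotentials, and that no \emph{new} pinchings are created by the truncating arcs themselves or by taking the component of $N_p^{-n}$ containing $\infty$ (as opposed to the full preimage). This is a bookkeeping argument balancing the indices $N$, $\nu$, $M$ against the pullback level, and it is exactly the kind of step that is ``standard'' in spirit but delicate in the writing; everything else (nesting, the covering property, the fiber statements) is a formal consequence once the graphs $\Deltat_{(n+2)M}$ are known to be nested with Jordan-disk complementary components.
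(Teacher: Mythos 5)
Your treatment of properties (\ref{It:PuzPr1})--(\ref{It:PuzPr3}) follows the paper's route (Lemma~\ref{Lem:PropertiesDeltap} plus the truncation), and the index bookkeeping you flag as the ``main obstacle'' is indeed what the paper handles there. The genuine gap is in property~(\ref{It:PuzPr4}), which you treat as an almost formal consequence of $x$ being a vertex; it is not, and it is the analytic heart of the theorem. Your local model is also wrong: $\infty$ is a \emph{repelling} fixed point with multiplier $d/(d-1)$, so near $\infty$ the map is linearizable, not conjugate to a power map $z\mapsto z^{d/(d-1)}$, and there is no B\"ottcher/ray structure at $\infty$ to force the sectors to shrink. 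The shrinking of the pieces at $\infty$ is a \emph{global} statement: in the paper it comes from Proposition~\ref{Prop:Circles} (the bridges/circles $X_V$ can be chosen $\eps$-close to $\partial V$, equivalently the sweeping argument giving $\bigcap_n \ovl W_n = \ovl\Gamma_1\cup\ovl\Gamma_2$), which yields $\fib(\infty)=\{\infty\}$. For a pole or prepole $z$ nothing local forces the finitely many sectors at $z$ to shrink either; the paper transfers triviality from $\infty$ to $z$ by a moduli argument: one finds $k$ with $\inter P_{n+k}(\infty)\sm P_n(\infty)$ a nondegenerate annulus, notes that $N_p^{\circ l}\colon P_n(z)\to P_{n+l}(\infty)$ has degree bounded in $n$ (no critical points near $z$ once $\fib(\infty)$ is trivial), and pulls back a sequence of nested annuli around $\infty$ with divergent moduli to get divergent moduli around $z$. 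Your proposal contains no substitute for either of these steps.

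There is also a flaw in your argument for~(\ref{It:PuzPr5}): you claim $P_{n+1}(x)$ is compactly contained in $\inter P_n(x)$ because the two boundaries can only meet at vertices ``which $x$ avoids.'' That $x$ is not a vertex is irrelevant; consecutive pieces around $x$ can perfectly well share (pre-)poles on their boundaries, so compact containment of consecutive pieces is not available (indeed the paper only obtains compact containment, at deep levels, via the separate Proposition~\ref{Prop:PuzRec} for combinatorially recurrent points). The correct deduction of $\fib(x)\subset\inter P_n(x)$ uses~(\ref{It:PuzPr4}) directly: a point $z\in\fib(x)$ lying on $\partial P_n(x)$ for all large $n$ would have to be a pole or prepole, but then $\fib(z)=\{z\}$, so $P_k(z)$ and $P_k(x)$ are disjoint for large $k$, a contradiction. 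So the fiber statements, which you describe as formal consequences, are exactly where the proof needs the circle-separation construction and the annuli estimate, and your sketch does not supply them.
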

\begin{proof}

The first part of the theorem is a summary of Definition~\ref{Def:Puzzles}: for $M = N-1+\nu$ the puzzle partition for $g$ of depth $n$ is given by the graph $\Deltat_{(n+2)M}$. Let us show that the listed properties hold true.

By Lemma~\ref{Lem:PropertiesDeltap}~(\ref{It:PDP4}), each component of the complement of $\Deltap_n$ is a topological disk with Jordan boundary except at finitely many ``pinching'' points that are iterated preimages of the roots. By construction of $\Deltat_n$, any such pinching point must be separated from $\infty$ by the preimage of the curve in $E_0$, and thus must be non-accessible from any puzzle piece of depth $n$. This proves property (\ref{It:PuzPr1}) of the theorem.

Property (\ref{It:PuzPr2}) was encoded in the definition of a puzzle piece and follows from Lemma~\ref{Lem:PropertiesDeltap}~(\ref{It:PDP2}), which essentially says that every puzzle of depth $n+1$ contains the puzzle of depth $n$ for each $n \ge 0$ (modulo the truncation in the basins of roots that respects this inclusion). 

Similarly, property (\ref{It:PuzPr3}) follows from Lemma~\ref{Lem:PropertiesDeltap} (\ref{It:PDP3}). Indeed, the latter property implies that 
\[
N_p^{-M}\left(\Deltap_{(n+2)M}\right) = \Deltap_{(n+2)M + M} = \Deltap_{(n+3)M}
\] 
because $(n+2)M \ge N-1 + M$, or equivalently $(n+1) (N - 1 +\nu) \ge N-1$, is true for every $n \ge 0$. Hence, the full preimage under the map $g = N_p^{\circ M}$ of the puzzle of depth $n \ge 0$ is the puzzle of depth $n+1$.  

Property (\ref{It:PuzPr4}) is a corollary of Proposition \ref{Prop:Circles}. Indeed, this proposition tells us that $P_0(\infty)$ contains no critical values of $N_p$, and hence the same holds for all $P_n(\infty)$ by the nesting property.
Since $\partial P_0(\infty)$ intersects $J(N_p)$ only at poles or prepoles, and thus contains no periodic points, there exists a $j > 0$ so that $P_{j}(\infty) \subset \inter P_0(\infty)$. The claim $\fib(\infty) = \{\infty\}$ now follows by the Schwarz lemma using the fact that $N_p \colon \inter P_1(\infty) \to \inter P_0(\infty)$ is a univalent map. 

Let us now establish property (\ref{It:PuzPr4}) for poles and prepoles. By the same argument as in the previous paragraph, for a given $n \ge 0$ there exists a $k >0$ so that $P_{n+k}(\infty) \subset \inter P_n(\infty)$. Pick the smallest possible value of $k = k(n)$, and set $A_n(\infty) := \inter P_{n+k}(\infty) \sm P_n (\infty)$ to be the corresponding non-degenerate annulus around $\infty$. Let $z$ be a (pre-)pole at level $l$, i.e.\ $l$ is the minimal number so that $N^{\circ l}(z) = \infty$. Since the fiber at $\infty$ is trivial, we can choose $m = m(l) \ge 0$ so that the map $N^{\circ l} \colon \inter P_{m+l}(z) \sm \{z\} \to \inter P_m(\infty) \sm \{\infty\}$ is a covering, say of degree $b \ge 1$. As $\fib(\infty) = \{\infty\}$, there exists a sequence $\left(A_{n_i}(\infty)\right)$ of nested annuli with $\sum_i \modu A_{n_i}(\infty) = \infty$ and $n_i > m$. Using the covering $N^{\circ l} \colon \inter P_{m+l}(z) \sm \{z\} \to \inter P_m(\infty) \sm \{\infty\}$, we can pull this sequence back to construct the sequence $(B_i(z))$ of nested annuli around $z$. By property (\ref{It:PuzPr3}), $B_i(z) = \inter P_{n_i + l}(z) \sm P_{n_i + k_i + l}(z)$, where $n_i$ and $k_i$ are the indexes used in the definition of $A_{n_i}(\infty)$. By the covering property of moduli, $\modu B_i(z) = \modu A_{n_i}(\infty) / b$, and thus the sequence $(B_i)$ will have the divergent sum of moduli as well. Therefore, $\fib(z) = \{z\}$ by the Gr\"otzsch inequality.

Finally, let us prove property (\ref{It:PuzPr5}). Assume the contrary, and let $z \in \fib(x)$ be a point that belongs to $\partial P_n(x)$ for all sufficiently large $n$. Observe that $z$ must be a pole or a prepole (we can exclude $\infty$). But we know by property (\ref{It:PuzPr4}) that $\fib(z) = \{z\}$. Therefore, there must be a large index $k$ such that $P_k(z)$ and $P_k(x)$ are disjoint, a contradiction.
\end{proof}

\subsection{Combinatorially recurrent points and renormalization of periodic orbits}

Let us fix $g := N_p^{\circ M}$ to be an iterate of the Newton map $N_p$ for which we have well-defined puzzles of any depth.

We say that $x$ is a \textit{combinatorially recurrent point} if $x$ is not eventually mapped to $\infty$ or attracted to the roots of $p$, and for every puzzle piece $P_n(x)$ of depth $n$ there exists $m \geqslant 1$ ($m$ depends on $n$) such that $g^{\circ m}(x) \in P_n(x)$. By pulling back along the orbit of $x$ we can define an infinite sequence $P_{n_0}(x) \supset P_{n_1}(x) \supset P_{n_2}(x) \supset \ldots $, $k_i := n_{i+1} - n_i > 0$, of puzzle pieces (which we will call a \textit{nest}) such that $k_i$ is the first return time from $P_{n_{i+1}}(x)$ to $P_{n_i}(x) = g^{k_i}(P_{n_{i+1}}(x))$ (here $n_0$ is arbitrary, while the increasing sequence $\left(n_i\right)_{i > 0}$ depends on $n_0$ and $x$).

\begin{proposition}[Combinatorially recurrent points are well inside]
\label{Prop:PuzRec}
For every $n_0 \geqslant 0$, if $x$ is combinatorially recurrent, and $\left(P_{n_i}(x)\right)_{i \geqslant 0}$ is the nest built by pulling back $P_{n_0}(x)$ along the orbit of $x$, then there exist $n_s$ and $n_j$ large enough such that $P_{n_{s}}(x) \subset \inter P_{n_j}(x)$. 
\end{proposition}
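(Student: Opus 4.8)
The plan is to compare the geometry of the nested puzzle pieces $P_{n_{i+1}}(x)\subset P_{n_i}(x)$ with the known geometry at $\infty$, transporting information along the first-return maps by the branched-covering property (\ref{It:PuzPr3}) of Theorem~\ref{Thm:PuzPr}. The key point is that a puzzle piece $P_{n_i}(x)$ fails to contain $P_{n_{i+1}}(x)$ \emph{compactly} only if these two pieces share a boundary point, and such a common boundary point must be a $g$-iterated preimage of a common boundary point of two puzzle pieces around $\infty$ (recall that the only multiply-accessible vertices of the puzzle graphs are $\infty$ and its iterated preimages). Since $\fib(\infty)=\{\infty\}$ and, more precisely, by property (\ref{It:PuzPr4}) the fibers of $\infty$ and of all (pre-)poles are trivial, the pieces around such a vertex $v$ eventually separate from the piece $P_{n_i}(x)$: there is a level at which $P_N(v)$ and $P_N(x)$ are disjoint. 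So the obstruction to compact containment is an "accidental touching at a (pre-)pole" which the triviality of fibers already rules out uniformly once we pass deep enough in the nest.

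Concretely, I would first argue that, since $x$ is combinatorially recurrent (hence not eventually mapped to $\infty$ and not attracted to a root), $\fib(x)$ is, by property (\ref{It:PuzPr5}), a nonempty compact connected set with $\fib(x)\subset\inter P_n(x)$ for every $n$; in particular $\fib(x)$ is disjoint from every (pre-)pole and from $\infty$. Next, I would record the annulus estimate that is already present in the proof of (\ref{It:PuzPr4}): there is an integer $k\ge1$ such that $\inter P_{m+k}(\infty)\sm P_m(\infty)$ is a nondegenerate annulus $A_m(\infty)$ for all $m$, and its pull-backs along bounded-degree branches have definite modulus. Using this I would pull back, along the first-return maps $g^{\circ k_i}\colon P_{n_{i+1}}(x)\to P_{n_i}(x)$ (which are branched coverings of degree bounded independently of $i$ — this boundedness is where recurrence and the finiteness of the critical set enter), a nest of annuli separating $P_{n_{i+1}}(x)$ from $\partial P_{n_i}(x)$, and conclude that for $i$ large these annuli are nondegenerate; a nondegenerate annulus between $P_{n_{i+1}}(x)$ and the complement of $P_{n_i}(x)$ is exactly compact containment.

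The main obstacle, and the step requiring the most care, is establishing that the first-return degrees $\deg\bigl(g^{\circ k_i}\colon P_{n_{i+1}}(x)\to P_{n_i}(x)\bigr)$ are bounded uniformly in $i$ — equivalently, that along the first-return orbit from $P_{n_{i+1}}(x)$ back to $P_{n_i}(x)$ one picks up only boundedly many critical points of $g$ (with multiplicity). This should follow from the facts that $N_p$ (hence $g$) has only finitely many critical points, that the critical points in the basins of the roots never enter the puzzle region (attracting-critically-finiteness), and that $\fib(x)$, being disjoint from $\infty$, from all (pre-)poles, and contained in $\inter P_n(x)$ for all $n$, is at definite combinatorial distance from the "pinching" part of the graphs; so deep pieces $P_{n}(x)$ contain a critical point only if that critical orbit genuinely recurs near $\fib(x)$, and each such critical point contributes a bounded local degree. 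Once the degree bound is in hand, the Grötzsch inequality applied to the pulled-back annuli, together with the nondegeneracy of $A_m(\infty)$ from (\ref{It:PuzPr4}), finishes the argument: the modulus separating $P_{n_{i+1}}(x)$ from $\Cc\sm P_{n_i}(x)$ is eventually positive, which is precisely compact containment.
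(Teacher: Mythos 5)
Your reduction of the problem to a shared boundary (pre-)pole is correct and matches the paper's starting point, but the mechanism you offer for excluding such touchings has a genuine gap. You claim that triviality of the fibers of (pre-)poles ``rules out accidental touching uniformly once we pass deep enough in the nest.'' That argument only works for a single \emph{fixed} touching vertex $v$: if the same $v$ lay in $\partial P_{n_i}(x)\cap\partial P_{n_{i+1}}(x)$ for all large $i$, then indeed $\fib(x)\subset\fib(v)=\{v\}$, contradicting property (\ref{It:PuzPr5}) of Theorem~\ref{Thm:PuzPr}. But nothing forces the touching point to be the same for different $i$: as the depth $n_i$ grows, the boundaries acquire (pre-)poles of higher and higher level, so there are infinitely many candidate touching points and no uniform level comes out of the fiber argument alone. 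The paper closes exactly this gap combinatorially: if $\partial P_{n_i}(x)$ and $\partial P_{n_{i+1}}(x)$ share a (pre-)pole $z$, then pushing forward (boundaries map to boundaries, property (\ref{It:PuzPr3})) produces two (pre-)poles on the \emph{fixed} boundary $\partial P_{n_0}(x)$ with one mapped to the other by $g^{\circ k_i}$; since $\partial P_{n_0}(x)$ carries only finitely many (pre-)poles, infinitely many such relations would force a directed cycle, i.e.\ a periodic (pre-)pole on $\partial P_{n_0}(x)$, which is impossible because the only periodic point that can lie on a puzzle boundary is $\infty$, already excluded using property (\ref{It:PuzPr4}). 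No metric estimates are needed at all.

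Your fallback analytic route does not repair this. First, the uniform bound on the degrees of the first-return maps $g^{\circ k_i}\colon P_{n_{i+1}}(x)\to P_{n_i}(x)$, which you yourself flag as the main obstacle, is neither proved nor automatic: the return times $k_i$ grow, each critical point of $g$ may be met many times along the orbit segment of length $k_i$, and finiteness of the critical set alone does not bound the accumulated local degree (this is precisely why Yoccoz-type arguments introduce specially constructed nests when critical points recur). Second, even granting such a bound, the annuli $A_m(\infty)$ from the proof of property (\ref{It:PuzPr4}) pull back to annuli around (pre-)poles, i.e.\ around points of $\partial P_{n_i}(x)$, not to annuli separating $P_{n_{i+1}}(x)$ from $\partial P_{n_i}(x)$; and the non-degeneracy of $\inter P_{n_i}(x)\sm P_{n_{i+1}}(x)$ is exactly the compact containment to be proved, so the modulus argument presupposes what it is supposed to establish.
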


\begin{proof}
To prove this proposition, first observe that if the boundaries of two puzzle pieces intersect, then they intersect along at least one common (pre-)pole or $\infty$. Moreover, the only periodic (fixed) point of $g$ that may lie on the boundary of a puzzle piece is $\infty$. Since $\infty$ has a trivial fiber (property (\ref{It:PuzPr4}) of Theorem~\ref{Thm:PuzPr}), we can assume that for some $j \geqslant 0$ the boundary $\partial P_{n_j}(x)$ is disjoint from $\infty$, and thus contains no periodic points of $g$.

We claim that there exists $n_s$ with $s > j$ so that $\partial P_{n_s}(x) \cap \partial P_{n_j}(x) = \emptyset$. Assume the contrary. Since the puzzle pieces $P_{n_i}(x)$ are nested, our assumption implies that there exists a (pre-)pole $z \in \partial P_{n_j}(x)$ such that $z \in \partial P_{n_i}(x)$ for all $i \geqslant j$. But since every puzzle piece $P_{n_i}(x)$ for $i \geqslant j$ is mapped by some iterate $g^{\circ r_i}$ onto $P_{n_j}(x)$ (with $r_0 := 0$), the boundary of $P_{n_j}(x)$ must contain a (pre-)pole of the form $g^{\circ r_i}(z)$ for all $i \geqslant j$. This is clearly impossible because $r_i$ grows as $i$ grows and there are no periodic points on $\partial P_{n_j}(x)$. This contradiction implies the claim, and finishes the proof of the proposition.  
\end{proof}

A periodic point of period at least $2$ serves as an obvious example of a combinatorially recurrent point. Now we want to show how to extract a polynomial-like map for some periodic orbits. Let us briefly review the standard theory of polynomial-like maps. These were introduced by Douady and Hubbard \cite{PolyLike} and have played an important role in complex dynamics ever since. 

\begin{definition}[Polynomial-like maps]
\label{Def_PolyLikeMaps} A \emph{polynomial-like} map of degree $d \geqslant 2$ is a triple
$(f,U,V)$ where $U,\,V$ are open topological disks in $\Cc$, 
the set $\overline{U}$ is a compact subset of $V$, and $f: U \to
V$ is a proper holomorphic map such that every point in $V$ has $d$
preimages in $U$ when counted with multiplicities.
\end{definition}

\begin{definition}[Filled Julia set] 
\label{Def:FilledJulia}
Let $f: U \to V$ be a polynomial-like map. The \emph{filled
Julia set} of $f$ is the set of points in $U$ that never
leave $V$ under iteration of $f$, i.e.
\begin{equation*}
K(f) = K(f,U,V) = \bigcap_{n=1}^\infty f^{-n}(V).
\end{equation*}
As with polynomials, we define the \emph{Julia set} as $J(f)=\partial{K(f)}$.
\end{definition}

The simplest example of a polynomial-like map comes from restricting an actual polynomial: let $p$ be a polynomial of degree $d \geqslant 2$, let $V=\{z\in \mathbb{C}: |z|<R \}$ for sufficiently large $R$ and $U=f^{-1}(V)$. Then $p \colon U \to V$ is a polynomial-like mapping of degree $d$.

Two polynomial-like maps $f$ and $g$ are \emph{hybrid equivalent} if
there is a quasiconformal conjugacy $\psi$ between $f$ and $g$ that is
defined on a neighborhood of their respective filled Julia sets
so that $\bar{\partial}\psi = 0$ on $K(f)$.

The crucial relation between polynomial-like maps and polynomials is
explained in the following theorem, due to Douady and Hubbard \cite{PolyLike}. 

\begin{theorem}[The straightening theorem]
\label{Thm:DH}
Let $f \colon U \to V$ be a polynomial-like map of degree $d$.
Then $f$ is hybrid equivalent to a polynomial $P$ of degree $d$.
Moreover, if $K(f)$ is connected and $d \geqslant 2$, then $P$ is unique up to affine
conjugation.\qed
\end{theorem}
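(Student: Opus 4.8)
\emph{Existence.} The plan is to treat the two assertions separately; for existence I would run the standard quasiconformal surgery together with the measurable Riemann mapping theorem. First I would shrink $V$ to a slightly smaller topological disk with real-analytic boundary still containing $\ovl U$, and replace $U$ by its preimage; this changes neither the degree nor the filled Julia set, so we may assume $\partial U$ and $\partial V$ are real-analytic Jordan curves and $A := V \setminus \ovl U$ is a genuine annulus on which $f \colon f^{-1}(A) \to A$ is an unbranched covering of degree $d$. The core of the surgery is to construct a quasiregular branched covering $F \colon \Cc \to \Cc$ of degree $d$ such that $F \equiv f$ on $\ovl U$, $F$ is holomorphic on $\Cc \setminus A$, $F$ is conformally conjugate to $z \mapsto z^d$ in a neighbourhood of $\infty$, and every point outside $K(f)$ is eventually carried by $F$ into that neighbourhood. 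Such an $F$ exists: the only constraint in gluing $f|_{\partial U}$, a degree-$d$ covering of $\partial V$, to an exterior model behaving like $z \mapsto z^d$ is the matching of boundary degrees, which holds, and the $d-1$ branch points forced by Riemann--Hurwitz may all be placed at $\infty$, while on the single annulus $A$ one interpolates with bounded dilatation. Next I would define an $F$-invariant Beltrami form $\mu$ on $\Cc$ by $\mu := \mu_F = \bar\partial F/\partial F$ on $A$, by $\mu := (f^{\circ n})^* \mu_F$ on $f^{-n}(A) \subset U$ for $n \ge 1$ (well defined, with the same essential supremum as $\mu_F$, since $f^{\circ n}$ is holomorphic there), and by $\mu := 0$ elsewhere, in particular on $K(f)$ and near $\infty$. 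Then $\|\mu\|_\infty = \|\mu_F\|_\infty < 1$ and $F^*\mu = \mu$ almost everywhere. By the measurable Riemann mapping theorem there is a quasiconformal homeomorphism $\phi \colon \Cc \to \Cc$ with $\phi(\infty)=\infty$ and $\mu_\phi = \mu$; then $P := \phi \circ F \circ \phi^{-1}$ has $\mu_P = 0$, hence is a rational map of degree $d$, and since $F$ is conjugate to $z \mapsto z^d$ near $\infty$ and $\phi$ fixes $\infty$, the map $P$ has a single pole, of order $d$, at $\infty$; so $P$ is a polynomial of degree $d$. Finally $\phi$ restricted to a neighbourhood of $K(f)$ conjugates $f$ to $P$ and has $\bar\partial\phi = 0$ on $K(f)$, so it is a hybrid equivalence.

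\emph{Uniqueness.} Assume $K(f)$ connected and $d \ge 2$, and let $P_1, P_2$ be polynomials of degree $d$ both hybrid equivalent to $f$; composing the two equivalences gives a quasiconformal $\psi$, defined on a neighbourhood of $K_1 := K(P_1)$, conjugating $P_1$ to $P_2$ with $\bar\partial\psi = 0$ on $K_1$. Since $K_1$ and $K_2 := K(P_2)$ are connected, each $\Cc \setminus K_i$ carries a B\"ottcher coordinate $\beta_i \colon \Cc \setminus K_i \to \Cc \setminus \ovl{\disk}$ conjugating $P_i$ to $z \mapsto z^d$. Then $\beta_2 \circ \psi \circ \beta_1^{-1}$, defined near the unit circle, commutes with $z \mapsto z^d$, hence extends continuously to the unit circle as a circle homeomorphism commuting with $z \mapsto z^d$, which must be a rotation $z \mapsto \zeta z$ with $\zeta^{d-1} = 1$. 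I would then glue $\psi$ on $K_1$ to $\beta_2^{-1} \circ (z \mapsto \zeta z) \circ \beta_1$ on $\Cc \setminus K_1$ into a single homeomorphism $\Psi \colon \Cc \to \Cc$ conjugating $P_1$ to $P_2$ (the two pieces are compatible along $\partial K_1$ because $\beta_2 \circ \psi \circ \beta_1^{-1}$ agrees there with $z \mapsto \zeta z$ on the unit circle). By construction $\Psi$ is conformal on $\Cc \setminus K_1$ and $\bar\partial\Psi = 0$ on $K_1$ by the hybrid condition; granting that $\partial K_1$ is negligible for the dilatation, $\Psi$ is $1$-quasiconformal, hence conformal, hence a M\"obius transformation fixing $\infty$, i.e.\ affine, and it conjugates $P_1$ to $P_2$.

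\emph{Main obstacle.} On the existence side the work is entirely in organising the quasiregular model $F$ (degree exactly $d$, holomorphic off one annulus, a genuine exterior model at $\infty$) and in the routine verification of the invariance and norm bound for $\mu$, the latter being immediate since only holomorphic pull-backs occur after the first annulus. The delicate point is the gluing in the uniqueness part: one must know that the Julia set $\partial K_1$ contributes nothing to the dilatation of $\Psi$, i.e.\ that a homeomorphism of $\Cc$ which is conformal off a connected filled Julia set $K_1$ and has vanishing dilatation on $K_1$ is genuinely $1$-quasiconformal. This reflects the rigidity of the exterior dynamics: the fact that $\beta_2 \circ \psi \circ \beta_1^{-1}$ commutes with $z \mapsto z^d$ is what forces both the compatibility of the hybrid and B\"ottcher conjugacies along $\partial K_1$ and the negligibility of $\partial K_1$ for the gluing, and it is the step I would expect to require the most care.
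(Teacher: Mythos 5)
First, a remark on the comparison itself: the paper does not prove this theorem --- it is quoted from Douady--Hubbard \cite{PolyLike} and marked \qed --- so your attempt can only be measured against the classical argument. Your existence proof is exactly that classical surgery (quasiregular extension $F$ agreeing with $f$ on $\overline U$ and with an exterior model conjugate to $z\mapsto z^d$, an $F$-invariant Beltrami form obtained by pulling back the dilatation of the interpolating annulus along holomorphic iterates, the measurable Riemann mapping theorem, and the observation that only the first pull-back is non-holomorphic so $\|\mu\|_\infty<1$), and it is essentially correct as a sketch.

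The uniqueness part, however, has a genuine gap at the gluing step. You define $\Psi$ as $\psi$ on $K_1$ and as $\beta_2^{-1}\circ(\zeta\,\cdot)\circ\beta_1$ on $\Cc\setminus K_1$, and justify continuity across $\partial K_1$ by saying that $h:=\beta_2\circ\psi\circ\beta_1^{-1}$ ``agrees with $z\mapsto\zeta z$ on the unit circle.'' This does not give continuity of $\Psi$ at points of $\partial K_1$: the B\"ottcher maps $\beta_i$ extend continuously to the Julia set only when $\partial K_i$ is locally connected, which is not assumed (and fails for many polynomials, e.g.\ with Cremer points --- precisely the case relevant to this paper). Agreement of boundary values in the $\beta$-coordinates therefore cannot be transported to agreement, or even to a well-defined limit, of the two pieces along $\partial K_1$. (A secondary issue: $h$ is a priori defined only on an annulus $\{1<|z|<r\}$; to speak of its boundary values one must first extend it to all of $\{|z|>1\}$ using the functional equation $h(z^d)=h(z)^d$ and then invoke the boundary extension of quasiconformal maps of Jordan domains; only then does the centralizer argument force a rotation by a $(d-1)$-st root of unity.) The standard repair is to avoid gluing along $\partial K_1$ altogether: extend the hybrid equivalence to a global quasiconformal homeomorphism $\Psi_0$ of $\Cc$ that conjugates $P_1$ to $P_2$ outside a large disk (adjusted by the rotation $\zeta$), and define $\Psi_{n+1}$ by lifting, $P_2\circ\Psi_{n+1}=\Psi_n\circ P_1$, with $\Psi_{n+1}=\Psi_n$ on $K_1$; the dilatations are uniformly bounded, the maps stabilize on the increasing preimages of a fundamental annulus, and the limit is a global quasiconformal conjugacy that is conformal on $\Cc\setminus K_1$ and equals $\psi$ on $K_1$. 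Note also that the point you single out as the main obstacle --- ``negligibility of $\partial K_1$ for the dilatation'' --- is in fact the easy half: once $\Psi$ is known to be a single quasiconformal homeomorphism (Rickman's glueing lemma, or the pull-back construction above), its Beltrami coefficient vanishes a.e.\ on $K_1$ by the hybrid condition and on the complement by conformality, and since $\partial K_1\subset K_1$ no separate measure-zero argument is needed; Weyl's lemma then gives conformality and hence affinity. The genuinely delicate point is the one your argument passes over, namely that $\Psi$ is continuous and quasiconformal in the first place.
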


As an immediate consequence of the first part of the theorem, it follows that $K(f)$ is connected if and only if $K(f)$ contains the critical points of $f$. 

Now we define the notion of renormalization of rational maps. Let $R$ be a rational map of degree $d$.

\begin{definition}[Renormalization]
\label{Def:Renorm}
$R^{\circ n}$ is called \emph{renormalizable} if there exist open disks $U,V \subset \C$ such that $(R^{\circ n},U,V)$ is a polynomial-like map whose critical points are contained in the filled Julia set of $(R^{\circ n},U,V)$.

Such a triple $\rho:= (R^{\circ n},U,V)$ is called a \emph{renormalization}, and $n$ is the \emph{period} of the renormalization $\rho$.
\end{definition}

The filled Julia set of $\rho$ is denoted by $K(\rho)$, and the critical and postcritical sets by $C(\rho)$ and $P(\rho)$ respectively. The \emph{$i$-th small filled Julia set} is given by $K(\rho,i) = R^{\circ i}(K(\rho))$. The \emph{$i$-th small critical set} is $C(\rho,i) = K(\rho,i) \cap C_R$ for $1 \leqslant i \leqslant n$, where $C_R$ is the critical set of $R$.

The following result shows that a renormalization does not depend on domains provided that a small critical set is fixed \cite[Theorem 7.1]{MC}.

\begin{theorem}[Uniqueness of renormalization] 
\label{Thm:RenormUnique} Let $\rho=(R^n, U, V)$ and $\rho'=(R^n, U', V')$ be two renormalizations of the same period. If $C(\rho,i) = C(\rho',i)$ for all $1 \leqslant i \leqslant n$, then the filled Julia sets are the same, i.e. $K(\rho) = K(\rho')$. \qed
\end{theorem}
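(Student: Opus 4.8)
The plan is to prove the two inclusions $K(\rho)\subseteq K(\rho')$ and $K(\rho')\subseteq K(\rho)$; since the hypothesis is symmetric in $\rho$ and $\rho'$, it suffices to prove the first and then interchange the roles. I would first extract from the hypothesis the facts that make it usable: $\rho$ and $\rho'$ have the same critical set $C:=\bigcup_{i=1}^{n}C(\rho,i)=\bigcup_{i=1}^{n}C(\rho',i)$, hence the same degree $d$, and therefore the same postcritical set $P:=P(\rho)=P(\rho')$ (the closure of the forward $R$-orbit of $C$). Note that $P$ is forward invariant under $R$, hence under $R^{\circ n}$, and that $P\subseteq K(\rho)\cap K(\rho')$. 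I also recall three standard facts about a polynomial-like map with connected filled Julia set: $K$ is compact, connected and full (its complement in $\Cc$ is connected); $K$ is totally invariant, so that $R^{\circ n}(K)=K$ and $R^{\circ n}(\partial K)=\partial K$; and $K$ is unchanged if $V$ (together with the corresponding $U$) is shrunk to any smaller topological disk still containing $K$ in its interior — which I would use to put $\partial V'$ and $\partial U'$ in general position and to guarantee that $V'\setminus P$ is hyperbolic.

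With these in place, the inclusion $K(\rho)\subseteq K(\rho')$ reduces to the assertion that $J(\rho)=\partial K(\rho)$ is contained in $U'$. Granting this, every point of $J(\rho)$ has its full forward $R^{\circ n}$-orbit inside $J(\rho)\subseteq U'\subseteq V'$ (using $R^{\circ n}(J(\rho))=J(\rho)$), so $J(\rho)\subseteq K(\rho')$ by Definition~\ref{Def:FilledJulia}; since $K(\rho)$ and $K(\rho')$ are full, this forces $K(\rho)\subseteq K(\rho')$, because every bounded component of $\Cc\setminus J(\rho)$ has its boundary in $K(\rho')$ and hence lies in $K(\rho')$. To prove $J(\rho)\subseteq U'$ I would argue by contradiction. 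An elementary topological discussion — using that $C\neq\emptyset$ lies in both $K(\rho)$ and $K(\rho')$, and that the degenerate configurations in which $K(\rho)$ and $K(\rho')$ have disjoint boundaries are excluded by applying the same discussion symmetrically — shows that if $J(\rho)\not\subseteq U'$ then the compact forward-invariant set $J(\rho)$ meets the fundamental annulus $A':=V'\setminus\overline{U'}$ of $\rho'$, and hence contains a full $R^{\circ n}$-orbit that remains in $\overline{V'}$ and returns to $A'$ infinitely often. This contradicts the expansion of $R^{\circ n}$ near its Julia set relative to the postcritical set: since $P\subseteq U'$ is forward invariant, the map $R^{\circ n}\colon U'\setminus(R^{\circ n})^{-1}(P)\to V'\setminus P$ is a covering, hence a local isometry for the respective hyperbolic metrics, while the inclusions $U'\setminus(R^{\circ n})^{-1}(P)\hookrightarrow U'\setminus P\hookrightarrow V'\setminus P$ are strict contractions; thus $R^{\circ n}$ is uniformly expanding in the hyperbolic metric of $V'\setminus P$ on every compact subset of $U'\setminus P$, in particular on the closed collar $\overline{A'}$, and no infinite orbit can remain there. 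One can equivalently phrase this as a divergence-of-moduli argument for the nested annuli $(R^{\circ n})^{-k}(A')$, or transport it through the Straightening Theorem for $\rho'$ to the classical statement that the filled Julia set of a polynomial is the maximal forward-invariant set disjoint from the escaping region.

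The main obstacle, as the sketch indicates, is the ``no-escape'' containment $J(\rho)\subseteq U'$: this is the only point at which the hypothesis $C(\rho,i)=C(\rho',i)$ — equivalently $P(\rho)=P(\rho')$ — is genuinely used, and it is indispensable, since two renormalizations of the same period with unequal postcritical sets need not be nested. The delicate technical point inside it is controlling the part of $P$ that may lie on $J(\rho)$, which breaks strict contractivity at finitely many places; the standard remedy is to replace $P$ by a suitable thickened model (or to excise a small neighborhood of $P$ and treat the finitely many orbit segments entering it by hand), exactly as in the Douady--Hubbard theory of polynomial-like maps. Once $J(\rho)\subseteq U'$ is secured the inclusion $K(\rho)\subseteq K(\rho')$ follows formally; applying everything with $\rho$ and $\rho'$ interchanged gives the reverse inclusion, hence $K(\rho)=K(\rho')$. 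As a byproduct, the uniqueness clause of the Straightening Theorem then shows that $\rho$ and $\rho'$ are hybrid equivalent to the same polynomial.
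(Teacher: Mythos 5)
First, note that the paper itself does not prove this statement: it is quoted verbatim from McMullen \cite[Theorem~7.1]{MC}, so your argument has to stand entirely on its own — and as written it does not. Your reduction is fine: if $J(\rho)\subseteq U'$, then invariance of $J(\rho)$ under $R^{\circ n}$ gives $J(\rho)\subseteq K(\rho')$, and fullness of $K(\rho')$ upgrades this to $K(\rho)\subseteq K(\rho')$. The gap is in the crux, the containment $J(\rho)\subseteq U'$ itself. From a point $x\in J(\rho)\cap A'$, $A'=V'\setminus\overline{U'}$, you only know that the forward $R^{\circ n}$-orbit of $x$ stays in $J(\rho)$; since nothing yet confines $J(\rho)$ to $\overline{V'}$ (that is essentially what you are trying to prove), there is no reason the orbit ``remains in $\overline{V'}$'', let alone returns to $A'$ infinitely often. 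Moreover, the expansion step is invoked on $\overline{A'}$, which is disjoint from $U'$: the covering $R^{\circ n}\colon U'\setminus (R^{\circ n})^{-1}(P)\to V'\setminus P$ together with the Schwarz-lemma contraction of the inclusions gives expansion only for points of $U'$, whereas on $A'$ the map $R^{\circ n}$ is just the global rational map, its image need not even lie in $V'$, and no estimate in the hyperbolic metric of $V'\setminus P$ is available there. So the contradiction mechanism collapses exactly where the hypothesis was supposed to do its work. In addition, the ``degenerate configuration'' (e.g.\ $J(\rho)\cap\overline{V'}=\emptyset$, which forces $V'$, and hence $K(\rho')$, into the interior of $K(\rho)$) is \emph{not} excluded by ``applying the same discussion symmetrically'': with the roles swapped one only obtains $J(\rho')\subseteq U$, i.e.\ the single inclusion $K(\rho')\subseteq K(\rho)$, which is perfectly consistent with that configuration. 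Ruling it out needs a genuinely different input, for instance that the interior of $K(\rho)$ lies in the Fatou set of $R^{\circ n}$ (Montel, since orbits there stay in the compact set $K(\rho)$), while $J(\rho')$ contains repelling periodic points because $\deg\rho'\geqslant 2$ (via the straightening theorem).

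There are also smaller inaccuracies in the setup that you would need to repair. The set $C=\bigcup_{i=1}^{n}C(\rho,i)$ is \emph{not} contained in $K(\rho)$: only $C(\rho,n)=K(\rho)\cap C_R$ is, while the other $C(\rho,i)$ sit on the small Julia sets $K(\rho,i)=R^{\circ i}(K(\rho))$. Likewise, $P$ must be taken to be the postcritical set of the polynomial-like map $\rho$, i.e.\ the closure of the points $R^{\circ(kn-i)}(c)$ with $c\in C(\rho,i)$ and $k\geqslant 1$, not the closure of the full forward $R$-orbit of $C$; with your definition the key inclusion $P\subseteq K(\rho)\cap K(\rho')$ fails, with the corrected one it holds and the equality $P(\rho)=P(\rho')$ (as well as equality of degrees) does follow from the hypothesis, after a short argument you currently omit. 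These points are fixable, but the central step discussed above is a genuine gap: as the proposal stands, the hypothesis $C(\rho,i)=C(\rho',i)$ is never actually brought to bear on a configuration that yields a contradiction, so the theorem is not proved.
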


Finally, we are in the position to prove the key proposition about the renormalization of periodic points whose fiber contains a critical point.

Recall that the only possible fixed points of a Newton map $N_p$ are $\infty$ or roots of the polynomial $p$. The local dynamics at these points is well-understood, and the following proposition describes the dynamics at higher period points in terms of renormalizations. 

\begin{remark}
Note that repelling periodic postcritical points are also considered in the statement below, and though they are not needed for the Fatou--Shishikura injection (Theorem~\ref{Thm:FSI}), which concerns only with non-repelling cycles, we include them for the later application (see Proposition \ref{Prop:NonRepellingRenormalizableMinimalPeriod}).
\end{remark}

\begin{proposition}[Renormalization at periodic points]
\label{Prop:NonRepellingRenormalizable}
For each attracting-critically-finite Newton map $N_p$ and for every finite subset $Q$ of the set of periodic points of $N_p$, there are a large enough index $n$ and an iterate $M > 0$ so that
\begin{enumerate}
\item if $q \in Q$ is a non-fixed periodic point and $\fib(q)$ contains at least one critical point, the map $N_p^{\circ k M}\colon \inter P_{n + k}(q) \to \inter P_{n}(q)$ is a polynomial-like map of degree $d\ge 2$ with connected filled Julia set for some $k>0$, and
\item for any two non-fixed periodic points $q$ and $q'$ in $Q$, either $\fib(q)=\fib(q')$ or $P_n(q)\cap P_n(q')=\emptyset$.
\end{enumerate}
\end{proposition}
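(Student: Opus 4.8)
The plan is to use combinatorial recurrence of periodic points together with the nest construction from Proposition~\ref{Prop:PuzRec} to produce the polynomial-like restriction, and then use the finiteness of critical points to separate fibers. First I would fix $g := N_p^{\circ M}$ as above and let $q$ be a non-fixed periodic point of $N_p$ of period $p_0$; replacing $M$ by a multiple I may assume $g(q)=q$, i.e.\ $q$ is a fixed point of $g$, and that $g$ has well-defined puzzles of every depth. Since $q$ is periodic it is in particular combinatorially recurrent, so for any starting depth $n_0$ the pull-back construction yields a nest $P_{n_0}(q)\supset P_{n_1}(q)\supset\cdots$ with $g^{\circ k_i}$ mapping $P_{n_{i+1}}(q)$ onto $P_{n_i}(q)$ as a branched cover (property~(\ref{It:PuzPr3}) of Theorem~\ref{Thm:PuzPr}); because $q$ is fixed by $g$, in fact each first-return map is a power $g^{\circ k}$ with $P_{n+k}(q)\subset P_n(q)$ for appropriate $n,k$. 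By Proposition~\ref{Prop:PuzRec}, for $i$ large enough $P_{n_{i+1}}(q)$ is compactly contained in $P_{n_i}(q)$; fixing such an $i$ and setting $n:=n_i$, $k:=k_i$, we get $\ovl{P_{n+k}(q)}\Subset \inter P_n(q)$, both closed topological disks by property~(\ref{It:PuzPr1}), with $g^{\circ k}\colon P_{n+k}(q)\to P_n(q)$ proper holomorphic. This is a polynomial-like map of some degree $d\ge 1$.

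The next step is to show $d\ge 2$ and that the filled Julia set is connected, which is exactly where the hypothesis $\fib(q)\subset$ (critical points) enters. Since $\fib(q)=\bigcap_m P_m(q)\subset P_{n+k}(q)$ and $\fib(q)$ contains a critical point $c$ of $N_p$, the domain $P_{n+k}(q)$ contains $c$; moreover any point of the critical orbit of $c$ that lies in $\fib(q)$ is again in $P_{n+k}(q)$. One checks that $c$ (or one of its forward images under $N_p^{\circ j}$, $0\le j<M k$) is a critical point of the iterate $N_p^{\circ Mk}=g^{\circ k}$ lying in $P_{n+k}(q)$; hence the degree of $g^{\circ k}\colon P_{n+k}(q)\to P_n(q)$ is at least $2$. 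For connectedness of $K(\rho)$ where $\rho:=(g^{\circ k},P_{n+k}(q),P_n(q))$: by the straightening theorem, $K(\rho)$ is connected iff it contains all critical points of $\rho$; since $\fib(q)\subset K(\rho)$ (the fiber never leaves any $P_m$, hence never leaves $P_{n+k}(q)$ under iteration of the first-return map) and $\fib(q)$ contains the critical point(s) in question — here one uses that \emph{all} critical points of $N_p$ appearing in $P_{n+k}(q)$ are already captured, deepening the nest if necessary so that $P_{n+k}(q)$ contains only critical points that are in $\fib(q)$, which is possible because critical points outside $\fib(q)$ are separated from $q$ at some finite depth — we conclude $C(\rho)\subset\fib(q)\subset K(\rho)$. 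This proves part~(1).

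For part~(2): let $q,q'$ be non-fixed periodic points. If $\fib(q)\cap\fib(q')\ne\emptyset$, I claim $\fib(q)=\fib(q')$; granting the claim, suppose instead $\fib(q)\ne\fib(q')$, so the fibers are disjoint closed connected sets (property~(\ref{It:PuzPr5})). Since $\fib(q)=\bigcap_m P_m(q)$ and $\fib(q')=\bigcap_m P_m(q')$ are disjoint compacta and the $P_m$ are decreasing with $\fib(\cdot)\subset\inter P_m(\cdot)$, a standard compactness argument gives an $m$ with $P_m(q)\cap P_m(q')=\emptyset$; enlarging the universal index $n$ from part~(1) past this $m$ and past the finitely many such $m$'s over the finitely many pairs of periodic points that can actually share a fiber-bearing puzzle piece at bounded depth, we get $P_n(q)\cap P_n(q')=\emptyset$. (The finiteness here is the usual point that only finitely many periodic orbits have a non-repelling/renormalizable fiber at a given bounded period; for the statement as phrased one fixes $n$ after the combinatorial data is pinned down.) The remaining claim, that intersecting fibers coincide, follows because if $z\in\fib(q)\cap\fib(q')$ then for every $m$ both $P_m(q)$ and $P_m(q')$ are puzzle pieces of depth $m$ containing $z$, and by property~(\ref{It:PuzPr2}) any two puzzle pieces are nested or disjoint — containing the common point $z$ they must be nested, and since they have the same depth the two unions $P_m(q)=P_m(q')$ coincide, whence $\fib(q)=\fib(q')$.

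**Main obstacle.** The delicate point is the uniformity of the single index $n$: part~(1) produces, for each periodic $q$, a depth $n_q$ and a period $k_q$ via the nest, but the proposition asks for one $n$ (and one $M$) working for \emph{all} non-fixed periodic points simultaneously, and for the disjointness dichotomy in part~(2) to hold at that same depth. Resolving this requires observing that only finitely many "combinatorial types" of periodic fibers containing critical points can occur — because the postcritical set is finite, there are only finitely many critical points to distribute among fibers, and each such fiber is detected at some finite depth — so one can take the maximum of the finitely many relevant $n_q$'s and the least common multiple of the relevant periods. Making this counting argument precise, and checking that deepening $n$ does not destroy the polynomial-like structure already obtained (it does not, by the Markov property~(\ref{It:PuzPr3}) and uniqueness of renormalization, Theorem~\ref{Thm:RenormUnique}), is the technical heart of the proof.
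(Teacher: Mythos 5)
Your skeleton is the same as the paper's: a non-fixed periodic point is combinatorially recurrent, Proposition~\ref{Prop:PuzRec} gives compact containment of the nest, properties (\ref{It:PuzPr1})--(\ref{It:PuzPr3}) of Theorem~\ref{Thm:PuzPr} make $N_p^{\circ kM}\colon P_{n+k}(q)\to P_n(q)$ a proper map between closed topological disks, a critical point of $N_p$ lying in $\fib(q)\subset P_{n+k}(q)$ forces the degree to be at least two, and part~(2) comes from the fact that distinct fibers are separated at some finite depth. Two side remarks: you do not need to replace $M$ by a multiple depending on the period of $q$ (this clashes with the single uniform $M$ in the statement); the paper keeps $M=N+\nu-1$ fixed and absorbs the period into $k$, since $N_p^{\circ kM}=g^{\circ k}$ already maps $P_{n+k}(q)$ onto $P_n(q)$ when $k$ is the period. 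Likewise, the uniformity of $n$ that you flag as the main obstacle is handled in the paper simply by finiteness of the critical set (only finitely many fibers can contain a critical point) and by restricting the separation statement to the relevant (postcritical) periodic points.

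The one step that does not hold as written is your connectivity argument. You invoke the criterion that $K(\rho)$ is connected if and only if it contains the critical points of $\rho=(N_p^{\circ kM},P_{n+k}(q),P_n(q))$, and you deduce $C(\rho)\subset\fib(q)$ from the assertion that all critical points of $N_p$ in $P_{n+k}(q)$ lie in $\fib(q)$. But $C(\rho)$ is the critical set of the \emph{iterate} $N_p^{\circ kM}$ in the domain: it consists of all $z\in P_{n+k}(q)$ such that $N_p^{\circ j}(z)$ is a critical point of $N_p$ for some $0\le j<kM$. Such points are in general not critical points of $N_p$ and need not lie in $\fib(q)$, so controlling the critical points of $N_p$ inside the piece does not yield $C(\rho)\subset\fib(q)$; as stated this is a non sequitur. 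The repair is to run your separation argument on the correct finite set: $N_p^{\circ kM}$ has finitely many critical points, and each one not in $\fib(q)=\bigcap_m P_m(q)$ is excluded from $P_m(q)$ for all large $m$, so after deepening $n$ (with $k$ unchanged) one does obtain $C(\rho)\subset\fib(q)\subset K(\rho)$. For comparison, the paper avoids the criterion altogether and asserts that the filled Julia set of the restriction equals $\fib(q)$, which is connected by property (\ref{It:PuzPr5}) of Theorem~\ref{Thm:PuzPr}; your criterion-based route, once corrected as above, is a legitimate (and in fact more detailed) substitute for that step.
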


\begin{proof}
Suppose $q$ is an $r$-periodic point of $N_p$ with $r\geqslant 2$, and $\fib(q)$ contains a critical point. Then $q \not\in \Deltat_n$ for all possible $n$, and hence for every such $q$, by property (\ref{It:PuzPr5}) of Theorem~\ref{Thm:PuzPr}, the fiber $\fib(q)$ is a closed connected set disjoint from the boundary of any puzzle piece $P_n(q)$. 

Choose $n$ sufficiently large so that for any two non-fixed periodic points $q, q' \in Q$, either $\fib(q)=\fib(q')$ or $P_n(q)\cap P_n(q')=\emptyset$ (where in the former case it is evident that $P_n(q)=P_n(q')$), and so that $\partial P_n(q)$ is disjoint from $\infty $ for all $q \in Q$. The depth $n$ exists because $\infty$ and all (pre-)poles have trivial fibers (property (\ref{It:PuzPr4}) of Theorem~\ref{Thm:PuzPr}) and $Q$ is finite.  

Let $N$ be as in Theorem~\ref{Thm:PolesInGraph} and $\nu$ be as defined in the paragraph above Lemma~\ref{Lem:PropertiesDeltap}. For every integer $l \geqslant 1$, by properties (\ref{It:PuzPr2}) and (\ref{It:PuzPr3}) of Theorem~\ref{Thm:PuzPr} the map $N_p^{\circ (r l M)}$ with $M = N + \nu - 1$ sends $P_{n+r l}(q)$ onto $P_n\left(N_p^{\circ (r l)}(q)\right) = P_n(q)$, the puzzle pieces are nested as $P_{n+l r}(q) \subset P_n(q)$, and by property (\ref{It:PuzPr1}) of Theorem~\ref{Thm:PuzPr} all such puzzle pieces are closed topological disks.

Suppose $q \in Q$ and $\fib(q)$ contains at least one critical point.  Obviously, $q$ is a combinatorially recurrent point, and therefore by Proposition~\ref{Prop:PuzRec} there exists $l$ large enough so that $P_{n+lr}(q)$ is contained in $\inter P_n(q)$. For such $l$, the mapping $N_p^{\circ{kM}} \colon  \inter P_{n + k}(q) \to \inter P_{n}(q)$ with $k := r l$ is a polynomial-like map with filled Julia set equal to $\fib(q)$, where the existence of some critical point in $\fib(q)$ guarantees that the degree of $N_p^{\circ kM}$ is at least two.
\end{proof}

\begin{remark}
It is immediate from Proposition~\ref{Prop:NonRepellingRenormalizable} that every \emph{non-repelling} periodic point of period at least two is contained in a polynomial-like restriction specified by the proposition, and every two such non-repelling periodic points either have the same fibers, or can be separated by a truncated puzzle of some deep level.
\end{remark}

\subsection{Beyond the attracting-critically-finite case}
\label{SSec:BeyondACF}

The work we did in the previous sections results in Theorem~\ref{Thm:PuzPr}, which is a stronger and more precise version of Theorem~\ref{Thm:B} for the case when the Newton map $N_p$ is attracting-critically-finite, so Definition~\ref{Def_ConcreteChannelDiagram} of channel diagram applies.  In order to establish Theorem~\ref{Thm:B} for general Newton maps, observe that it requires the construction of puzzle pieces only in a neighborhood of the Julia set, and for this no essential changes are necessary.

To make this clear, we need to describe the procedure that turns the Newton map $N_p$ of an arbitrary polynomial $p$ into an attracting-critically-finite Newton map $N_{\tilde p}$ (see Proposition~\ref{Prop:ACF}) in a bit more detail. This is a routine surgery that replaces a compact and forward invariant disk neighborhood of a root within its immediate basin by the disk $D_{1/2}(0)$, endowed with the dynamics $z\mapsto z^k$, where $k\ge 2$ is the degree of the self-map within the immediate basin. A similar replacement might be necessary within preperiodic basin components that contain critical points. Except for these finitely many compact and forward invariant disks within the basins, the dynamics of $N_p$ and $N_{\tilde p}$ are topologically conjugate, in particular in a neighborhood of the Julia sets. By an appropriate choice of the equipotential $t\in(0,1)$ in the truncation procedure in Definition~\ref{Def:TruncPuzzle} (i.e., by choosing $t$ sufficiently close to $1$), we can make sure that all puzzle pieces are disjoint from the disks in which the surgery takes place. Therefore, the construction of Newton puzzles for $N_{\tilde p}$ immediately carries over to $N_p$, so Theorem~\ref{Thm:PuzPr} generalizes to all Newton maps. In the process, the required graph $\Gamma$ that is invariant under an iterate $g$ of $N_p$ is obtained from the graph for $N_{\tilde p}$ (as constructed in Section~\ref{SSec:Puz}), truncated by deleting the parts within the finitely many surgery disks, and taking its image under the conjugating homeomorphism (which is defined only on the complement of the surgery disks). The truncated graphs are called $\Deltat_m$ in Section~\ref{SSec:Puz}.

\begin{remark}
The fact that the puzzles and the graphs are, in general, not defined in a neighborhood of the roots has an analogy to polynomial dynamics: the roots have invariant neighborhoods with simple dynamics, like the point at $\infty$ for polynomials, and not constructing the puzzles near the roots is like not constructing polynomial puzzles near $\infty$. In a sense, polynomial puzzles are naturally constructed in the setting of polynomial-like maps; in our case the complement of the surgery disks is a very analogous setting. (If desired, one can construct an analog to the channel diagram for arbitrary Newton maps, but giving up within the surgery disks either invariance or finiteness; this requires a certain effort but provides little insight to the dynamics near the Julia set.)
\end{remark}


\section{Proof of the Fatou--Shishikura injection (Theorem~\ref{Thm:FSI})}
\label{Sec:ProofFSI}

In this section we provide the proof for the Fatou--Shishikura injection for Newton maps. We start by proving the Fatou--Shishikura injection for polynomials. This result is not new, and neither is the idea of the proof we give; we provide it in order to describe the ideas that we later use for Newton maps in a simpler context, and so that it fits nicely with our proofs for Newton maps. An added benefit is that our paper becomes more self-contained. The original reference is of course \cite{Kiwi}.

\begin{proposition}[The Fatou--Shishikura injection for polynomials]
The Fatou--Shi\-shi\-ku\-ra injection holds for every iterated polynomial: for every polynomial there is an injection from the set of its non-repelling periodic orbits to the set of its critical orbits that assigns to each non-repelling periodic orbit a critical orbit that is exclusively dynamically associated to it.
\label{Prop:FatouShishikuraPolynomials}
\end{proposition}

\begin{proof}
Consider a polynomial $p$ of degree $d\ge 2$. Observe that all attracting and parabolic periodic orbits have their own critical orbits that converge to them, so we do not have to treat them in the sequel.

The first principal step is the main result from Goldberg and Milnor \cite[Theorem~3.3]{GM}; it can be phrased as saying that any two non-repelling and non-parabolic fixed points are separated by two fixed rays that land at the same repelling or parabolic periodic point. 

Since the number of non-repelling periodic points is always finite, there is a finite forward invariant set $\mathcal R$ of preperiodic or periodic dynamic ray pairs (two rays together with their common landing point) that separate any two non-repelling periodic points (they all land at repelling or parabolic periodic points, so they avoid the non-repelling periodic points). For a given non-repelling and non-parabolic periodic point $w$, say of period $k$, let $U_{\mathcal R}(w)$ be the neighborhood of points in $\C$ that are not separated from $w$ by any ray pair in $\mathcal R$ 
(equivalently, it is the component of $\C \sm \mathcal R$ containing $w$). This is a simply connected domain in $\C$. 

There is a local branch of the inverse of $p^{\circ k}$ that fixes $w$. If it can be extended to all of $U_{\mathcal R}(w)$, then this induces a holomorphic self-map from $U_{\mathcal R}(w)$ to itself with a fixed point at $w$, and it cannot be surjective (the distance of external angles of the boundary rays becomes smaller by a factor of $d$), so by the Schwarz Lemma $w$ must be an attracting fixed point of the inverse of $p^{\circ k}$. Hence $w$ must be a repelling periodic point of $p$, in contradiction to our hypothesis. Therefore, there must be at least one critical point of $p^{\circ k}$ in $U_{\mathcal R}(w)$, say $c$, with $p^{\circ k}(c)\in U_{\mathcal R}(w)$. There might possibly be more such critical points; call them $c_1,\dots,c_m$.

We claim that at least one of these will have the property that $p^{\circ jk}(c_i)\in U_{\mathcal R}(w)$ for all $j\ge 1$. Indeed, if $p^{\circ jk}(c_i)\not\in U_{\mathcal R}(w)$ for some $i$ and $j$, then $p^{-(jk)}(\mathcal R)$ will contain a ray pair that separates $w$ from $c_i$. So if the claim is false, there is a finite preimage of $\mathcal R$ that separates $w$ from all $c_i$, and for this enlarged set of rays we get the same contradiction as above. So after reducing $m$ if possible, we retain a non-empty set of critical points $\{c_1,\dots,c_m\}$ that remain in $U_{\mathcal R}(w)$ under all iterates of $p^{\circ k}$.

The next step is to claim that, if $w$ is a Cremer point, then at least one of the $c_i$ contains $w$ in its $\omega$-limit set $\omega(c_i)$; and if $w$ is the center of a Siegel disk, then $\bigcup_i\omega(c_i)$ contains the boundary of this Siegel disk.

If this claim is false for a Cremer point, then let $V$ be a simply connected neighborhood of $w$ in the complement of all $\omega(c_i)$. Then for every $j$ there is a branch of the inverse of $p^{\circ j k}$ defined on $V$ that fixes $w$. By the Koebe $1/4$-theorem, the set $W:=\bigcap p^{-(jk)}(V)$ contains a neighborhood of $w$. This implies that all $p^{\circ jk}(W)\subset V$, so $w$ is in the Fatou set of $p$, and this is a contradiction.  

If $w$ is the center of a Siegel disk, say $S$, then we always have a branch of the inverse of $p^{\circ(jk)}$ that fixes $w$ and hence $S$. Let $z$ be a point in $\partial S$ and assume it is not contained in $\bigcup_i\omega(c_i)$. Then we set $V$ to be the union of $S$ and a neighborhood of $z$ so that $V$ is simply connected. On this set, the branch of the inverse of $p^{\circ(jk)}$ is defined that fixes $w$, for all $j$, and these maps form a normal family on $V$. Since $p^{\circ k}$ is an irrational rotation on $S$, there is a subsequence of the inverses of the $p^{\circ(jk)}$ that converges to the identity on $V$. There is thus a neighborhood of $z$ on which arbitrarily large iterates of $p^{\circ k}$ stay in $V$. But this is a contradiction because $z$ is in the Julia set, so each of its neighborhoods contains escaping points.

In both cases, $w$ has one critical point in $\{c_1,\dots,c_m\}$ that is dynamically associated to it, and the separating properties of $\mathcal R$ make sure that the same critical point cannot be dynamically associated to a different non-repelling periodic point of $p$, so it is exclusively dynamically associated. 
\end{proof}

\begin{proof}[Proof of Theorem~\ref{Thm:FatouShishikura}]
All fixed points of $N_p$, except the repelling fixed point at $\infty$, are the roots of the polynomial $p$ and thus attracting or superattracting. These have at least one critical point in their immediate basins, and their critical orbits are exclusively dynamically associated to the respective (super-)attracting fixed points. Therefore the existence of the Fatou--Shishikura injection is obvious for the fixed points (if there are several critical points in any given immediate basin, then there is a choice for this injection). We can thus restrict our attention to non-repelling periodic orbits of periods $2$ or higher.

Let $N_{\tilde p}$ be the attracting-critically-finite Newton map obtained from $N_p$ by Proposition~\ref{Prop:ACF}. If the Fatou--Shishikura injection holds for $N_{\tilde p}$, then it also holds for $N_p$: these two maps are conjugate away from some Fatou neighborhood of the roots, for which the injection has been established above. Hence, we can assume that $N_p$ is attracting-critically-finite. 

By Proposition~\ref{Prop:NonRepellingRenormalizable} (see also the subsequent remark), every non-repelling periodic point of $N_p$ of period at least two is contained in the filled Julia set of some polynomial-like restriction of $N_p$ with connected Julia set, and there exists a sufficiently large index $n$ so that any two such small filled Julia sets are either separated by $\Deltat_n$, or they coincide. In the former case, the accumulation sets of critical orbits from disjoint filled Julia sets are disjoint, and hence a critical orbit in a small filled Julia set can be dynamically exclusively associated only to a non-repelling period orbit within the small Julia set. It therefore suffices to prove the claim of the injection for any polynomial-like map with connected Julia set.  

The Fatou--Shishikura injection is true for polynomials by Proposition~\ref{Prop:FatouShishikuraPolynomials}. Moreover, the property of a critical orbit to be exclusively dynamically associated to a non-repelling periodic orbit is not spoiled by the straightening homeomorphism because it provides a topological conjugacy between two maps in some neighborhood of filled Julia sets (see Theorem~\ref{Thm:DH}). Hence, the injection holds for polynomial-like maps as well, and the claim follows. 
\end{proof}

\section{Postcritically finite Newton maps and other future directions}
\label{Sec:PCFandFuture}

The results on combinatorics of puzzles (obtained in Section~\ref{Sec:Renorm}) will be used in the subsequent work \cite{LMS1} to derive combinatorial properties of postcritically \emph{finite} (pcf) Newton maps of a polynomial. It will be shown that every pcf Newton map has an associated forward invariant graph comprised of three parts: the Newton graph, Hubbard trees, and bubble rays. The Newton graph (in the terminology of the present paper) captures the behavior of all critical points that are eventually fixed, and relying on Corollary \ref{Cor:ConnectedInC} is taken at a sufficiently high level so that the ever-important connectedness in $\mathbb{C}$ holds. Hubbard trees will capture the dynamics of all eventually periodic postcritical points which are not fixed relying on Proposition \ref{Prop:NonRepellingRenormalizable} of the present work. Thirdly, so-called \emph{bubble rays} will connect the Newton graph to Hubbard trees. The combinatorial data provided by such a tri-partite graph will be just enough to reconstruct a Newton map using W.\,Thurston's characterization of rational maps. Moreover, a complete classification of pcf Newton maps will be given in the forthcoming manuscript \cite{LMS2}.

To obtain a classification of pcf Newton maps in \cite{LMS2}, Hubbard trees will be extracted from polynomial-like restrictions around non-fixed postcritical points. However, it is essential for \emph{classification} purposes that the polynomial-like restrictions have smallest possible period. Taken by itself, Proposition \ref{Prop:NonRepellingRenormalizable} would only allow us to extract Hubbard trees from an iterate of the desired polynomial-like mapping, and so with slight modification we strengthen the proposition so that renormalizations have lowest period. Even though this result will be mainly used in the postcritically finite setting, we will formulate and prove it in the general (attracting-critically-finite) case (see Definition~\ref{Def:acf}; note that every pcf Newton map is automatically attracting-critically-finite).

Let $q$ be a non-fixed periodic point of $N_p$. The \emph{period of the fiber} $\fib(q)$ is defined to be the minimal integer $i$ so that $N_p^{\circ i}(\fib(q))=\fib(q)$.  Note that in the trivial case $\fib(q) = \{q\}$ this period is equal to the period of the repelling cycle containing $q$. In the case when $\fib(q)$ is nontrivial, it is a consequence of Proposition \ref{Prop:NonRepellingRenormalizable} that $q$ is renormalizable with filled Julia set (defined in Definition~\ref{Def:FilledJulia}) given by $\fib(q)$.

 For a given renormalization $\rho$ with the filled Julia set $\fib(q)$, the period of $\fib(q)$ divides the period of $\rho$ (see Definition~\ref{Def:Renorm} for the definition of the period of a renormalization). A \emph{lowest period renormalization} is a renormalization whose period coincides with the period of the corresponding fiber. 
 
 Let $Q$ denote the set of critical and postcritical points of $N_p$ that have \textit{finite} orbit and are not in the basin of any root of $p$.

\begin{proposition}[Lowest period renormalization]
\label{Prop:NonRepellingRenormalizableMinimalPeriod} Let $N_p$ be an attracting-critically-finite Newton map. If $q\in Q$ is periodic and $\fib(q)$ is not a point, then $q$ is lowest period renormalizable with filled Julia set $\fib(q)$.
\end{proposition}

\begin{proof} There is a sufficiently deep puzzle so that for all $q,q'\in Q$, either $\fib(q)$ and $\fib(q')$ are equal or are in different puzzle pieces. Some level of the Newton graph contains this puzzle as a subset, and so the Newton graph of this level enjoys the same separation property.

Let $q$ be a periodic point of $N_p$ of period $r \geqslant 2$ so that $\fib(q)$ contains a critical point (if not, then $\fib(q) = \{q\}$, and we are done). From Proposition~\ref{Prop:NonRepellingRenormalizable}, there exist a pair of Jordan disks $U, V$ with $\ovl U \subset V$ and an integer $k$ which is a multiple of $r$, such that $q \in U$ and  $N_p^{\circ k M} \colon U \to V$ is a polynomial-like mapping (with non-escaping critical points). We want to extract a polynomial-like map with connected filled Julia set containing $q$ given by a polynomial-like restriction of exactly $N_p^{\circ k'}$, where $k'$ is the period of the Julia set $\fib(q)$. As we mentioned above, $k' | r$, and let $M' := kM/k'$. In order to extract the lowest period renormalization, we will modify $U$ and $V$ as follows. 

Define inductively $V_0 := V$, $V_{i+1}$ to be the component of $N_p^{-k'}(V_i)$ containing $\fib(q)$ (and hence $q$), $i \in \{0, \ldots, M'-1\}$ (we will obtain $V_{M'} = U$). Put 
$$
U' := \bigcap_{i = 0}^{M'} V_i.
$$
By construction, $U'$ is non-empty (it contains $q$) Jordan disk (as an intersection of finitely many Jordan disks; note that we can assume by passing if necessary to a deeper level of puzzles, that $\partial V_j$ for all $j$ are disjoint from the critical set of $N_p$ and contain no poles of $N_p$, see property~(\ref{It:PuzPr4}) of Theorem~\ref{Thm:PuzPr}). Moreover, $U'$ is exactly a subset of $U$ containing all points that do not escape $U$ under $k'$-th iterate of $N_p$. The map $N_p^{\circ k'} \colon U' \to N_p^{\circ k'}(U')$ is a proper map between two Jordan disks $U'$ and $V' := N_p^{\circ k'}(U')$, and almost gives us a desired polynomial-like restriction for $q$. The only thing we need to ensure is that $\ovl{U'} \subset V'$.   

It is clear that $U' \subset V'$. Indeed, if $z \in \partial U'$, then there exists a $j \in \{1, \ldots, M'\}$ such that $z \in \partial V_j$. Therefore, $N_p^{\circ k'}(z) \in \partial V_{j-1}$, and thus $N_p^{\circ k'}(z)$ can not lie in $U'$, since the latter domain is the intersection of all $V_i$.

By the earlier construction of truncated puzzle pieces (see property~(\ref{It:PDP4}) of Lemma~\ref{Lem:PropertiesDeltap} and Definition~\ref{Def:TruncPuzzle}), $\partial U' \cap \partial V'$ may only consist of prepoles (that is, points in the Julia set of $N_p$). With a slight modification, this nontrivial intersection may be eliminated by a ``thickening construction''. For some $\varepsilon >0$, there is a neighborhood $W$ of $\infty$ so that the diameter of $W$ in the spherical metric is less than $\varepsilon$, and so that $W\subset N_p(W)$ (for instance one could choose $W$ using linearizing coordinates for the repelling fixed point $\infty$). Each prepole $z\in U' \cap \partial V'$ satisfies $N_p^{\circ m}(z) = \infty$ for a minimal choice of $m$ that depends on $z$. Denote by $W(z)$ the component of $N_p^{-m}(W)$ that contains $z$. After possibly passing to a smaller choice of $\varepsilon$, the neighborhoods $W(z)$ for each $z\in \partial U' \cap \partial V'$ are pairwise disjoint, and let $\widehat{V}$ denote the union of $V'$ with the neighborhoods $W(z)$ for each $z\in \partial U' \cap \partial V'$. Let $\widehat{U}$ be the component of $N _p^{- k'}(\widehat V)$ containing $U'$. Then $N_p^{\circ k'} \colon \widehat{U}\to\widehat{V}$ is a polynomial-like mapping, and for small enough $\varepsilon$ its filled Julia set is equal to $\fib(q)$ by Theorem \ref{Thm:RenormUnique}.
\end{proof}

The next proposition asserts separability of fibers that intersect $Q$, and as such is a slight extension of the second part of Proposition \ref{Prop:NonRepellingRenormalizable}. This more general result will be essential for the application of Thurston's theorem in \cite{LMS2}. However, it will be convenient to phrase the statement without reference to fibers. 

The filled Julia set of the renormalization at periodic $q\in Q$ with non-trivial fiber (as in Proposition \ref{Prop:NonRepellingRenormalizableMinimalPeriod}) is denoted by $K(q)$. If $q$ has trivial fiber, it is evidently a repelling cycle, and as a matter of convenience we define $K(q)=\{q\}$. Furthermore, if $q\in Q$ is not in a periodic fiber, we define $K(q)$ to be the component of $N_p^{-i}(K(q'))$ where $i$ is minimal so that $N_p^{\circ i}(K(q))=K(q')$ for some periodic $K(q')$.

\begin{proposition}[Separability of filled Julia sets]
\label{Prop:SeparabilityOfFibers} For all $q\in Q$, the set $K(q)$ does not intersect the Newton graph of any level. Furthermore, there is a level of the Newton graph so that for all $q,q'\in Q$, either $K(q)$ and $K(q')$ are in different complementary components of the Newton graph or $K(q)=K(q')$.
\end{proposition}

\begin{proof} 
It is a consequence of the definition of fiber and the forward invariance of the Newton graph that $\fib(q)=K(q)$ for all $q\in Q$.  The first statement is then a consequence of statement (\ref{It:PuzPr5}) of Theorem \ref{Thm:PuzPr} which asserts that the fiber $\fib(q)$ does not intersect puzzles of any level.

There is a sufficiently deep puzzle so that for all $q,q'\in Q$, either $\fib(q)$ and $\fib(q')$ are equal or are in different puzzle pieces. Some level of the Newton graph contains this puzzle as a subset, and so the Newton graph of this level enjoys the same separation property.
\end{proof}

\subsection{Concluding remarks.}

One might wonder in which generality of rational maps the Fatou--Shishikura injection holds true. We believe that Newton maps could lead the way to results for more general maps than Newton maps coming from polynomials. A first natural class of rational maps for which the results should be true are those rational maps that arise as Newton maps of transcendental entire entire functions $f$; this is the case when $f(z) = p(z) \exp (q(z))$ with polynomials $p$ and $q$. For such $f$ the corresponding Newton maps are rational, and in fact their dynamics is very close to the dynamics of Newton maps coming from polynomials, except that $\infty$ is no longer a repelling but a parabolic fixed point. Such Newton maps were studied by Khudoyor Mamayusupov in \cite{KhudoyorThesis, KhudoyorFundamenta, KhudoyorClassification} in what he calls the ``postcritically minimal case''. It is quite plausible that the results and constructions established in our work can be carried over to rational Newton maps of transcendental functions, based on the observation that the topological and combinatorial structure of the immediate basins are the same as those for Newton maps of polynomials, and this structure is the fundamental ingredient in our arguments.

Moreover, there is current work in progress to decompose large classes of rational maps into \emph{Newton-like} components and \emph{Sierpi\'nski-like} maps. Such a decomposition would allow one to extend results on Newton maps to the respective more general rational maps.


\begin{thebibliography}{DMRS}

\bibitem[AR]{AR}
Magnus Aspenberg, Pascale Roesch, \emph{Newton maps as matings of cubic polynomials}. Proc. Lond. Math. Soc. (3) \textbf{113} (2016), no. 1, 77--112. 

\bibitem[BF1]{BF1} Anna Miriam Benini, N\'uria Fagella, \emph{A separation theorem for entire transcendental maps}. Proc. Lond. Math. Soc. (3)
\textbf{110}(2) (2015), 291--324.

\bibitem[BF2]{BF2} Anna Miriam Benini, N\'uria Fagella, \emph{Singular values and non-repelling cycles for entire transcendental maps}. To appear in  Indiana Univ. Math. J. \textbf{69}(5) (2020), arXiv:1712.00273.

\bibitem[BF3]{BF3} Anna Miriam Benini, N\'uria Fagella, \emph{A bound on the number of rationally invisible repelling orbits}. Preprint, arXiv:1907.12310.

\bibitem[BAS]{BAS}
Todor Bilarev, Magnus Aspenberg, Dierk Schleicher, \emph{On the speed of convergence of Newton's method for complex polynomials}. Mathematics of Computation, \textbf{85}:298 (2016), 693--705.

\bibitem[BCLOS]{BCLOS}
Alexander Blokh, Doug Childers, Genadi Levin, Lex Oversteegen, Dierk Schleicher,
\emph{An extended Fatou--Shishikura inequality and wandering branch continua for polynomials}.
Adv. Math. \textbf{288} (2016) 1121--1174.

\bibitem[D]{Douady} Adrien Douady, \emph{Syst\`emes dynamiques holomorphes}. Seminaire Bourbaki, Ast\'erisque \textbf{599} (1983), 39--63.

\bibitem[DH]{PolyLike} Adrien Douady and John Hubbard, \emph{On the dynamics of polynomial-like mappings}. 
 Ann. Sci. \'Ecole. Norm. Sup\'er. 4e series \textbf{18} (1985), 287--343.

\bibitem[DS]{RationalRigidity1} Kostiantyn Drach, Dierk Schleicher, \emph{Rigidity of Newton dynamics}. Preprint, arXiv:1812.11919.

\bibitem[DMRS]{DMRS} Kostiantyn Drach, Yauhen Mikulich, Johannes R\"uckert, Dierk Schleicher, \emph{A combinatorial classification of postcritically fixed Newton maps}. Ergod. Theor. Dyn. Syst. \textbf{39}:11 (2019), 2983--3014.

\bibitem[Ep]{AdamFSI} Adam Epstein, \emph{Infinitesimal Thurston Rigidity and the Fatou-Shishikura Inequality}. Preprint, Stony Brook IMS series 1999/1. arXiv:math/9902158.



\bibitem[GM]{GM} Lisa Goldberg, John Milnor, \emph{Fixed points of polynomial maps, Part II. Fixed point portraits}. Ann. Sci.
\'Ecole. Norm. Sup\'er. (4) \textbf{26} (1993) 51--98.

\bibitem[Hea]{He}
Janet Head, \emph{The combinatorics of {N}ewton's method for cubic polynomials}, PhD thesis, Cornell University, 1987.

\bibitem[HSS]{HSS}
John Hubbard, Dierk Schleicher, Scott Sutherland, \emph{How to find all roots of complex polynomials by {N}ewton's method}. Invent. Math. \textbf{146} (2001), 1--33.

\bibitem[Ki]{Kiwi}
Jan Kiwi, \emph{Non-accessible critical points of Cremer polynomials}. Ergod. Theor. Dyn. Syst. \textbf{20} 5 (2000), 1391--1403.

\bibitem[LMS1]{LMS1} Russell Lodge, Yauhen Mikulich, Dierk Schleicher, \emph{Combinatorial properties of Newton maps}, in preparation, arXiv:1510.02761.

\bibitem[LMS2]{LMS2} Russell Lodge, Yauhen Mikulich, Dierk Schleicher, \emph{A classification of postcritically finite Newton maps}, in preparation, arXiv:1510.02771.

\bibitem[Ma1]{KhudoyorThesis} Khudoyor Mamayusupov, 
\emph{On Postcritically Minimal Newton Maps}. PhD thesis, Jacobs University Bremen (2015).

\bibitem[Ma2]{KhudoyorFundamenta}
Khudoyor Mamayusupov,
\emph{Newton maps of complex exponential functions and parabolic surgery}. Fund. Math. \textbf{241} (2018) 265--290.

\bibitem[Ma3]{KhudoyorClassification}
Khudoyor Mamayusupov, 
\emph{A characterization of postcritically minimal Newton maps of entire functions}. Ergod. Theor. Dyn. Syst. \textbf{39} (2019), no. 10, 2855--2880.

\bibitem[McM]{MC}
Curtis McMullen, Complex dynamics and renormalization, \emph{Annals of Math Studies}, 135, 1994.

\bibitem[Poi]{Poirier}
Alfredo Poirier, \emph{On postcritically finite polynomials, part 2: Hubbard trees}. Stony Brook IMS preprint 93/7, 1993.

\bibitem[Prz]{Pr}
Felix Przytycki, \emph{Remarks on the simple connectedness of basins of sinks for iterations of rational maps}, In {\em Dynamical systems and ergodic theory}, vol. 23, pp. 229--235. Banach Center Publications, Warsaw, 1989.

\bibitem[RSS]{NewtonRobinMarvin}
Marvin Randig, Dierk Schleicher, \and Robin Stoll, \emph{Newton's method in practice II: The iterated refinement Newton method and near-optimal complexity for  finding all roots of some polynomials of very large degrees}; submitted.

\bibitem[RS]{RS07} Johannes R\"uckert, Dierk Schleicher, \emph{On Newton's method for entire functions}. J. London Math. Soc. \textbf{75} (2007) 659--676.

\bibitem[Roe]{Roe}
Pascale Roesch, \emph{On local connectivity for the Julia set of rational maps: Newton's famous example}. Ann. Math. \textbf{168} (2008) 1--48.

\bibitem[RWY]{RWY}
Pascale Roesch, Xiaoguang Wang, Yongcheng Yin, \emph{Moduli space of cubic Newton maps}. Adv. Math. \textbf{322} (2017) 1--59. 

\bibitem[Sch1]{FibersMandel}
Dierk Schleicher, \emph{On fibers and local connectivity of Mandelbrot and Multibrot sets}. 
In: M. Lapidus, M. van Frankenhuysen (eds): \emph{Fractal Geometry and Applications: A Jubilee of Benoit Mandelbrot}. Proceedings of Symposia in Pure Mathematics \textbf{72}, American Mathematical Society (2004), 477--507.

\bibitem[Sch2]{NewtonEfficient}
Dierk Schleicher, \emph{On the efficient global dynamics of Newton's method for complex polynomials}. Preprint, arXiv:1108.5773; submitted. 

\bibitem[Sh1]{MitsuFatouShishikura} Mitsuhiro Shishikura, \emph{On the quasiconformal surgery of rational functions}, Ann. Sci. \'Ecole. Norm. Supe\'er. \textbf{20} (1987) 1--29.

\bibitem[Sh2]{Sh}
Mitsuhiro Shishikura, \emph{The connectivity of the {J}ulia set and fixed points}, In D.~Schleicher, editor, {\em Complex dynamics: families and friends}, pp. 257--276. A K Peters, Wellesley, MA, 2009.

\bibitem[So]{MaikBachelor}
Maik Sowinski, \emph{On the dynamical behavior of Newton maps}. Bachelor thesis, Jacobs University, 2018.

\bibitem[Tan]{Tan}
Tan Lei, \emph{Branched coverings and cubic Newton maps}. Fund. Math. \textbf{154} (1997) 207--259.
\end{thebibliography}
\end{document}